\documentclass[letterpaper, 11pt]{article}
\usepackage{amsmath}
\usepackage{amssymb}
\usepackage{amsthm}
\usepackage{mathrsfs}
\usepackage{wasysym}
\usepackage{mathtools}
\usepackage[shortlabels]{enumitem}
\usepackage{graphicx}
\usepackage[left=1in, right=1in, top=1in, bottom=1in]{geometry}

\usepackage{hyperref}
\usepackage{cleveref}
\usepackage{color}
\usepackage{float}
\usepackage{soul}
\usepackage{caption}
\usepackage{subcaption}
\usepackage{circledsteps}
\usepackage{multirow}
\usepackage[toc,page]{appendix}
\usepackage{algorithm}
\usepackage{algpseudocode}
\usepackage{yhmath}
\usepackage{listings}
\usepackage{makecell}
\usepackage{orcidlink}
\usepackage[blocks]{authblk}

\usepackage{listings}
\definecolor{codegreen}{rgb}{0,0.6,0}
\definecolor{codegray}{rgb}{0.5,0.5,0.5}
\definecolor{codepurple}{rgb}{0.58,0,0.82}
\definecolor{backcolour}{rgb}{0.95,0.95,0.92}

\lstdefinestyle{mystyle}{
    backgroundcolor=\color{backcolour},   
    commentstyle=\color{codegreen},
    keywordstyle=\color{magenta},
    numberstyle=\tiny\color{codegray},
    stringstyle=\color{codepurple},
    basicstyle=\ttfamily\footnotesize,
    breakatwhitespace=false,         
    breaklines=true,                 
    captionpos=b,                    
    keepspaces=true,                 
    numbers=left,                    
    numbersep=5pt,                  
    showspaces=false,                
    showstringspaces=false,
    showtabs=false,                  
    tabsize=2
}

\usepackage[
backend=biber, 
style=alphabetic, 
sorting=ynt 
]{biblatex} 
\theoremstyle{plain}
\newtheorem{theorem}{Theorem}[section]
\newtheorem{lemma}[theorem]{Lemma}

\theoremstyle{definition}

\newcommand{\N}{\mathbb{N}}	
\newcommand{\R}{\mathbb{R}}

\setlist[enumerate]{noitemsep}
\setlist[itemize]{noitemsep}

\title{Unilateral and Equitransitive Tilings by Squares of Sizes 1, 2, 3, and 4}

\author[1]{Layan Arrabi}
\author[1]{Casey Mann \orcidlink{0000-0003-3881-3595}}
\author[2]{Fatima Muniz}
\author[3]{Harmony Vargas}
\author[4]{Rachel Villafranca \orcidlink{0009-0007-2930-252X}}

\affil[1]{University of Washington Bothell}
\affil[2]{University of Wisconsin–Oshkosh}
\affil[3]{University of Nebraska-Lincoln}
\affil[4]{Tufts University}

\date{September 2026}

\begin{document}

\maketitle

\begin{abstract}
In this article we examine a problem stated in Gr\"{u}baum and Shephard's classic book on tiling theory, \emph{Tilings and Patterns}, which asks the reader to find all unilateral and equitransitive tilings by squares of sizes $1, 2, 3, \ldots, n$ where $n \in \N$. The unilaterality condition requires that two squares of the same size in such a tiling may not intersect along an entire side, and the equitransitivity requires that two tiles of the same size in such a tiling must be in the same transitivity class with respect to the symmetry group of the tiling. These two conditions on tilings by squares allows for surprisingly interesting and complex tilings by squares. In this work we provide the answer to Gr\"{u}baum and Shephard's question for the case $n = 4$, as well as developing methods and computer algorithms that may be extended to the cases where $n \geq 5$. \end{abstract}

\section{Introduction}
We begin by defining some terminology. A \emph{tile} is a closed topological disk in $\R^2$, and a \emph{tiling} of the plane $\R^2$ is a covering of $\R^2$ by tiles such that any two distinct tiles in the tiling have disjoint interiors. A \emph{protoset} is a collection of distinct tiles $\mathcal{P}$ and we say $\mathcal{P}$ \emph{admits} a tiling $\mathcal{T}$ if every tile in $\mathcal{T}$ is congruent to a tile in $\mathcal{P}$. In this article, the protosets of interest are those consisting of squares of consecutive integer sizes $1, 2, \ldots, n$.

A \emph{symmetry} of a tiling $\mathcal{T}$ is any rigid motion of the plane $\sigma$ such that $\sigma(\mathcal{T}) = \mathcal{T}$, and the set of all symmetries of a tiling $\mathcal{T}$ is called the \emph{symmetry group} of the tiling and is denoted by $S(\mathcal{T})$. If $S(\mathcal{T})$ contains two nonparallel translations, we say that $\mathcal{T}$ is \emph{periodic}; otherwise, $\mathcal{T}$ is \emph{nonperiodic}. If for two tiles $T_1,T_2 \in \mathcal{T}$ there exists some symmetry $\sigma \in S(\mathcal{T})$ such that $\sigma(T_1) = T_2$, we say $T_1$ is \emph{equivalent} to $T_2$. If a tiling has the property that any two congruent tiles are equivalent, then we say that the tiling is \emph{equitransitive}.

When two tiles in a tiling have nonempty intersection, we say that these tiles are \emph{neighbors}; any two tiles that are neighbors intersect along their boundaries, and such an intersection may consist of some arcs which we call \emph{edges} of the tiling and/or some isolated points. These isolated points along with the endpoints of edges of a tiling are called \emph{vertices} of the tiling. In the special case that a tiling is formed from polygons, we are careful to make a distinction between the edges and vertices of the tiling and the \emph{sides} and \emph{corners} of the polygons of the tiling. See Figure \ref{fig:sides-vs-edges}. A tiling by polygons is said to be \emph{unilateral} if every edge in the tiling is a side of at most one polygon in the tiling. 

\begin{figure}[h]
\centering
\includegraphics[width=.4\textwidth]{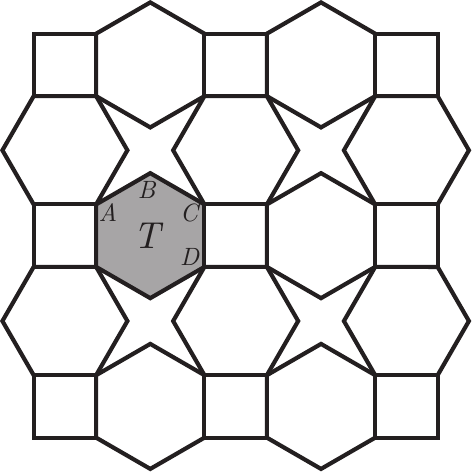}\caption{Arcs $\overline{AB}$, $\overline{BC}$, and $\overline{CD}$ are sides of tile $T$, but $\overline{AB}$ and $\overline{BC}$ are not edges of the tiling. $\overline{CD}$ and $\wideparen{ABC}$ are edges of the tiling. $A$, $B$, $C$, and $D$ are corners of $T$, but only $A$, $C$, and $D$ are vertices of the tiling. $A$, $B$, $C$, and $D$ are all corners of $T$, but only $A$, $C$, and $D$ are vertices of the tiling.}\label{fig:sides-vs-edges}
\end{figure}

In the case of tilings by squares of different sizes, which is the focus of the article, the unilaterality condition simply means that two neighboring squares of the same size can never perfectly line up, meeting along the full length of their common sides. We will refer to unilateral and equitransitive tilings by polygons generically as \emph{UE} tilings, and more specifically, we will refer to UE tilings by squares of $n$ different sizes as \emph{UE-$n$} tilings.

\subsection{Previous Results}
UE-$n$ tilings have been the subject of a few previous articles, which we will briefly overview here. The problem is first discussed in \cite{GS1987}, where several known examples discovered by the authors and other were presented. Later, in \cite{Martini}, the authors state and prove a number of general properties of unilateral tilings by squares of three sizes and demonstrated the existence of 8 different UE-3 tilings. It was later proven in \cite{Schatt}, and independently in \cite{Bolcskei}, that these 8 UE-3 tilings represented a complete classification of the UE-3 tilings. These tilings are shown in Figure \ref{fig:UE3}. In \cite{MannUE4}, the authors purported to classify the UE-4 tilings; however, it has since been revealed that this classification was incomplete, owing somewhat to the complexity of the case-checking that is at the heart of that article's main result, which was done manually.

\begin{figure}[H]
\centering
\begin{subfigure}{0.24\textwidth}
\centering
\includegraphics[width=0.8\textwidth]{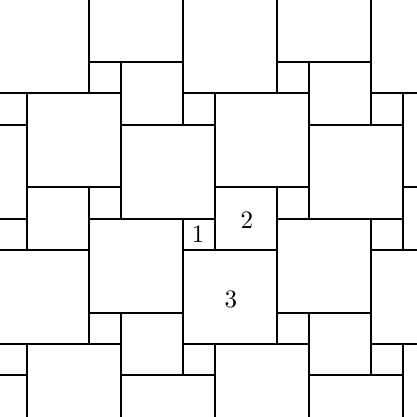} \caption{}
\end{subfigure}
\begin{subfigure}{0.24\textwidth}
\centering
\includegraphics[width=0.8\textwidth]{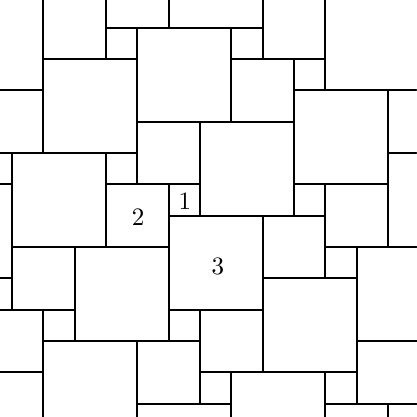} \caption{}
\end{subfigure}
\begin{subfigure}{0.24\textwidth}
\centering
\includegraphics[width=0.8\textwidth]{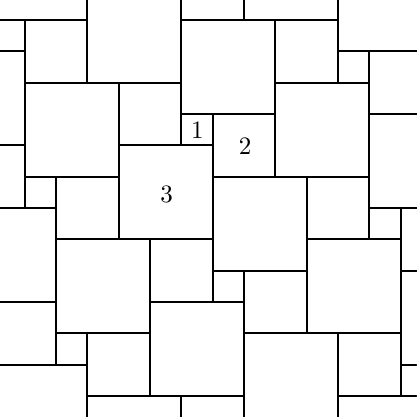} \caption{}
\end{subfigure}
\begin{subfigure}{0.24\textwidth}
\centering
\includegraphics[width=0.8\textwidth]{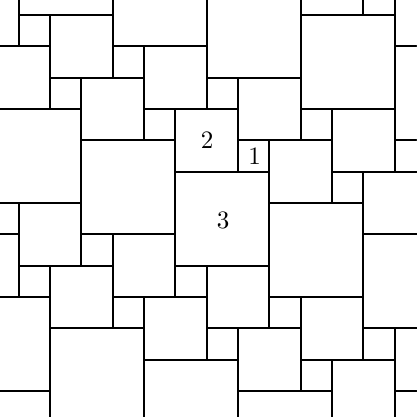} \caption{}
\end{subfigure}
\begin{subfigure}{0.24\textwidth}
\centering
\includegraphics[width=0.8\textwidth]{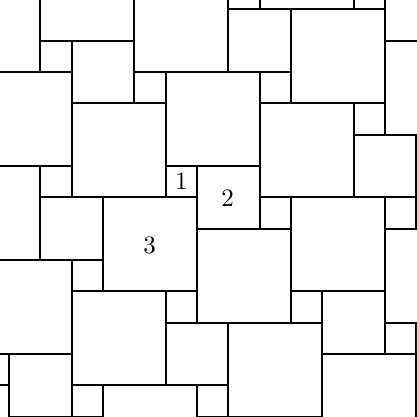} \caption{}
\end{subfigure}
\begin{subfigure}{0.24\textwidth}
\centering
\includegraphics[width=0.8\textwidth]{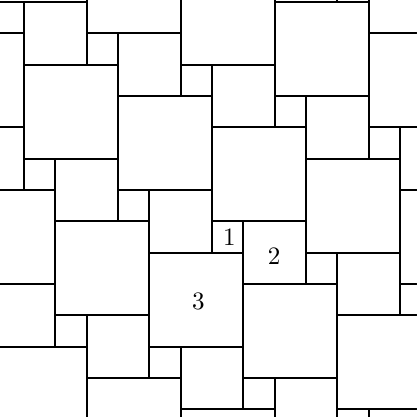} \caption{}
\end{subfigure}
\begin{subfigure}{0.24\textwidth}
\centering
\includegraphics[width=0.8\textwidth]{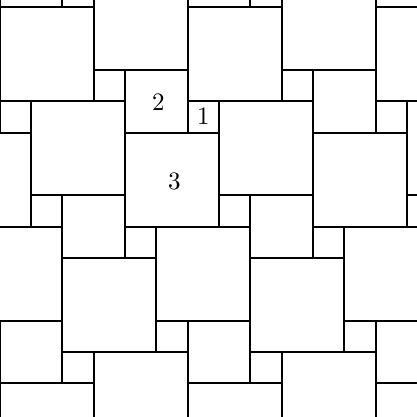} \caption{}
\end{subfigure}
\begin{subfigure}{0.24\textwidth}
\centering
\includegraphics[width=0.8\textwidth]{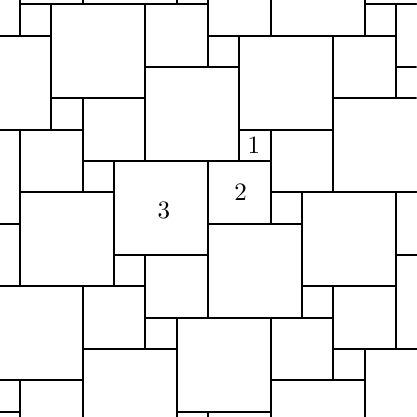} \caption{}
\end{subfigure}
\caption{The eight UE-3 tilings classified in \cite{Schatt}.}\label{fig:UE3}
\end{figure}

The main goals of this article are to (1) describe a computerized process that can be used to help determine all UE tilings by squares of consecutive integer sizes $1, 2, \ldots, n$, and (2) apply this computerized process to the problem when $n = 4$. This problem, for $n \geq 6$, is given in \cite{GS1987} (See Chapter 2.4, Exercise 5), though the authors of this article are unaware of solutions to the $n = 4$ and $n = 5$ case in the literature.  We note that the 8 tilings presented in \cite{Martini}, \cite{Schatt}, and \cite{Bolcskei} are all realizable as UE tilings by squares of sizes 1, 2, and 3, so the problem has been previously settled for $n = 3$. The main theorem of this paper (Theorem \ref{thm:main}) states that for UE-4 tilings by squares of sizes 1, 2, 3, and 4, there are 24 such tilings along with one infinite family of tilings; the tilings in this infinite family all contain a common set of equally spaced fault lines, and the only difference between any two tilings in this family is how the rows of tiles between these fault lines are shifted relative to each other so that these tilings all have a very similar appearance, even though they are technically distinct.

\section{Preliminaries}\label{sec:prelims}

\subsection{Coronas and Corona Signatures}
A \emph{corona} of a square $C$ in a UE-$n$ tiling $\mathscr{T}$ consists of $C$ along with all its neighbors in $\mathscr{T}$. A square with side length $r$ will be referred to as a square of \emph{size} $r$ or a \emph{size}-$r$ square. If $C$ is a square of size $r$, we call a corona of $C$ an \emph{$r$-corona}. An immediate consequence of the equilaterality condition of $\mathscr{T}$ is that all $r$-coronas in $\mathscr{T}$ are congruent, so the process of classifying all UE-$n$ tilings depends on finding and combining all possible $n$ distinct coronas centered on squares of $n$ distinct sizes.

Previous studies on UE-$n$ tilings have classified such tilings in terms of their coronas, and the notation used to describe coronas in these previous studies was a simple list of numbers corresponding to the sizes of those surrounding squares in the order they appear reading either clockwise or counterclockwise around the central square. For example, in Figure \ref{fig:4-corona}, the traditional corona signature for this 4-corona would be 1.3.4.3.4.3.3.2 (starting with the size-1 square on the top side and proceeding clockwise) or, equivalently, 2.3.3.4.3.4.3.1 (starting at the size-2 square on the left side and proceeding counterclockwise). For coronas involving squares of four or more sizes, the traditional notation can be ambiguous. For example, both of the 4-coronas in Figure \ref{fig:inconsistent-notation} have traditional corona signature 1.3.4.3.4.3.3.2, even though these two coronas are clearly noncongruent. Thus, our first task is to introduce a new notation for coronas that removes the ambiguity seen in the older notation.

\begin{figure}[H]
\centering
\begin{subfigure}{0.4\textwidth}
\centering
\includegraphics[width=.85\textwidth]{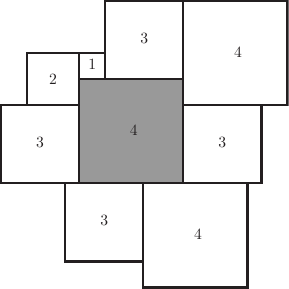}\caption{}\label{fig:4-corona}
\end{subfigure}
\begin{subfigure}{0.4\textwidth}
\centering
\includegraphics[width=.85\textwidth]{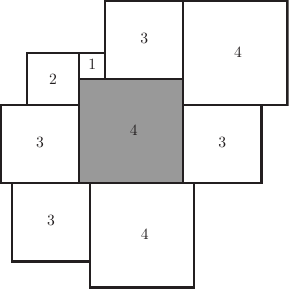}\caption{}\label{fig:4-corona-ii}
\end{subfigure}
\caption{Two different 4-coronas with ambiguous signature 1.3.4.3.4.3.3.2}\label{fig:inconsistent-notation}
\end{figure}

Throughout, we assume that all squares have horizontal and vertical sides and are oriented clockwise in the order top, right, bottom, and left. Suppose that $S$ is a central square of a corona $\mathcal{C}$. If the corners of $S$ are labeled clockwise, starting on the upper left, as $V_0, \ldots, V_3$, as in Figure \ref{fig:exponent}, then $\overline{V_0 V_1},\overline{V_1 V_2}, \overline{V_2 V_3}$ and $\overline{V_3 V_0}$ are the sides of $S$. The set of extended sides of $S$ along the edges of the corona $\mathcal{C}$ is called the \emph{skeletal extension} of $S$ (see Figure \ref{fig:skeleton}). For each side $s$ of $S$, there is a first square (with respect to the direction of the side) lying along the extension of $s$ that contains the initial vertex of $s$; we have labeled these squares by their sizes $a_0, b_0, c_0$, and $d_0$ in Figure \ref{fig:exponent}. For $i \in \{0,1,2,3\}$, let $x_i$ be the length of the intersection between the first square of the $i$-th side and the part of the skeletal extension of $S$ that is outside of $S$; we call $x_i$ the \emph{prehang} of the $i$-th side. Notice that $x_i$ satisfies $0 \leq x_i \leq  q_0$ where $q_0$ is the first square of the $i$-th side.

\begin{figure}[h]
\centering
\begin{subfigure}{0.45\textwidth}
\centering
\includegraphics[width=.85\textwidth]{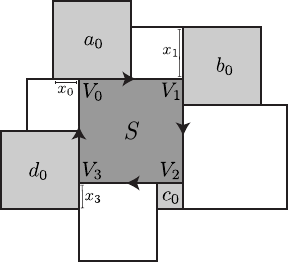}\caption{The corona $\mathcal{C}$ with central square $S$}\label{fig:exponent}
\end{subfigure}
\begin{subfigure}{0.45\textwidth}
\centering
\includegraphics[width=.85\textwidth]{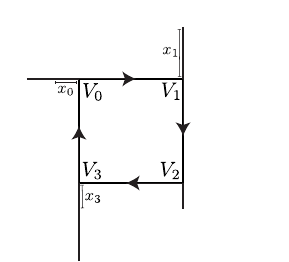}\caption{the skeletal extension of $S$ in $\mathcal{C}$}\label{fig:skeleton}
\end{subfigure}
\caption{}\label{fig:skeletal-extesnion}
\end{figure}

With these preliminaries in place, the new notation for unambiguously describing a corona will take the form \[a_0^{x_0}.a_1.\cdots.a_{n_0}\,\,;\,\,b_0^{x_1}.b_1.\cdots.b_{n_1}\,\,;\,\,c_0^{x_2}.c_1.\cdots.c_{n_2}\,\,;\,\, d_0^{x_3}.d_1.\cdots.d_{n_3}.\] The semicolons in the notation break the signature into 4 sections referred to as the \emph{sides} of the corona. For each side, the final entry ($a_{n_0}$, $b_{n_1}$, $c_{n_2}$, or $d_{n_3}$) is the size of the first square lying along that side (reading clockwise) that contains the terminal corner of that side. For example, the 4-corona of Figure \ref{fig:4-corona-iii} has signature \[1^{0}.3\,;\,4^{4}.3.2\,;\,1^{0}.3\,;\,4^{4}.3.2.\] The first section of this signature begins with $1^0$ since $1$ is the size of the first square along that side that contains the initial vertex of the top side, and it ends at $3$ (instead of $4$) because the size-3 square is the first square along that side containing the terminal corner of that side. Similarly, the second section of the signature (corresponding to the right side) begins with $4^4$ since the first square along the right side is of size $4$ the prehang there is $4$, and ends with 2 because 2 is size of the first square lying along the right side that contains the terminal corner of the right side. And so on. Notice that the exponents in the notation that specify the prehang for each side are the element of the notation that eliminates the ambiguity seen in the older notation. There are instances where we may specify an exponent in the notation with a continuous parameter. For example, in Figure \ref{fig:4-corona-iv}, we have labeled the amount of prehang on the bottom side by $x$ where $x \in \R$ satisfies $0 < x < 3$, and so the bottom side of the signature of that corona is $4^x.3$. 

\begin{figure}[h]
\centering
\begin{subfigure}{0.4\textwidth}
\centering
\includegraphics[width=.85\textwidth]{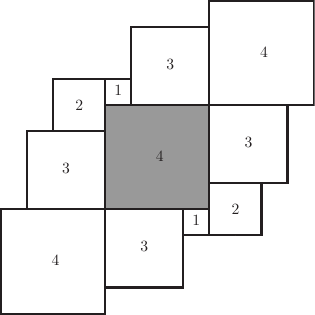}\caption{A 4-corona with signature\\ $1^{0}.3;4^{4}.3.2;1^{0}.3;4^{4}.3.2$.}\label{fig:4-corona-iii}
\end{subfigure}
\begin{subfigure}{0.4\textwidth}
\centering
\includegraphics[width=.85\textwidth]{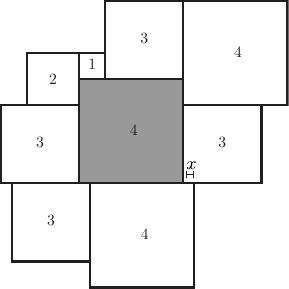}\caption{A 4-corona with signature\\ $1^{0}.3;4^{3}.3;4^{x}.3;3^{0}.2$ where $0 < x < 3$.} \label{fig:4-corona-iv}
\end{subfigure}
\caption{}\label{fig:4-corona-examples}
\end{figure}

\subsection{Skeletal Archetypes}

At each corner of the central square of a corona, the skeletal extension may extend in 1 or 2 directions; if we consider all the possibilities for all 4 corners combined, there are 15 possible types (up to symmetry) of resulting figures, shown in Figure \ref{fig:archetypes}; we call these 15 types the \emph{skeletal archetypes} of coronas. For example, the 4-corona of Figure \ref{fig:4-corona-iii} is of skeletal archetype I and the 4-corona of Figure \ref{fig:4-corona-iv} is of type IX. Note that all coronas having the same size central square in a UE-$n$ tiling must have the same skeletal archetype. Also, notice that the central squares of types I - VI and XI - XV have corners in which the edges extend in two directions; we call these \emph{double corners}. If consecutive corners of an archetype (reading around the central square clockwise) are both double corners, we will say that these double corners are \emph{adjacent}. Observe that archetypes XI - XV have adjacent double corners and the other archetypes do not.

\begin{figure}[h]
\centering
\includegraphics[width=.8\textwidth]{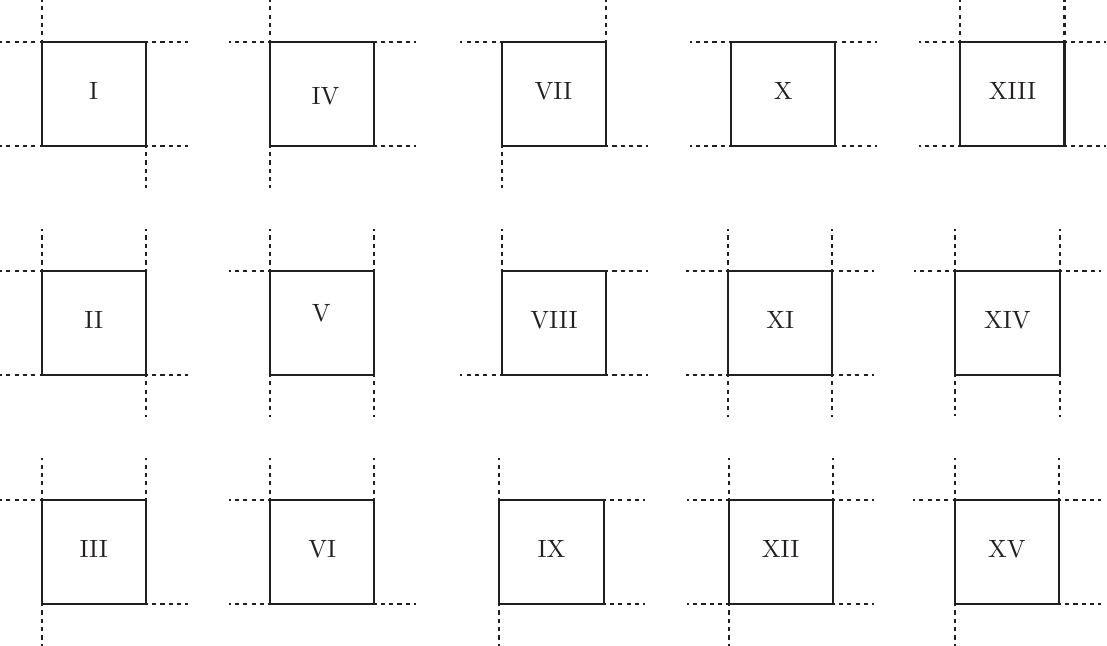}\caption{The 15 skeletal archetypes of coronas.}\label{fig:archetypes}
\end{figure}

Skeletal archetypes of coronas are used in the process of enumerating all possible coronas of each size. Here we provide a few important facts about them that will aid in the enumeration process. Coronas of skeletal archetype VII are of particular importance, and in previous articles such a corona is called a \emph{vortex}; we shall continue with that terminology. The following fact is from \cite{Martini} (See Theorems 1 and 2) speaks to the importance of vortices.

\begin{theorem}[\cite{Martini}] In a UE tiling by squares of two or more distinct sizes, the coronas of the smallest two sizes of squares must be vortices. \label{thm:vortices}\end{theorem}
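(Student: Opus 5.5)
The approach is to show directly that the central square of a corona of either of the two smallest sizes can have no double corner. Every archetype other than VII has at least one double corner: a corner of the central square $S$ where the skeletal extension continues in two directions. Equivalently, some side of a neighbor is flush with a side of $S$ at that corner, so the extension of one side of $S$ passes straight through the corner. So it suffices to show that, for the smallest size $s_1$ and the second smallest size $s_2$, every corner of $S$ is a single corner. At a single corner the corner of $S$ lies in the interior of a side of a neighbor; that is the vortex, or pinwheel, pattern.

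First treat a square $S$ of the smallest size $s_1$. Suppose some corner $V$ of $S$ is a double corner, so two edges of the tiling meet at $V$ in a straight line along the extension of one side of $S$. Then some neighbor $T$ of $S$ has a corner at $V$ and shares part of that line. The neighbors of $S$ are all at least as large as $S$, and by unilaterality any neighbor of size $s_1$ cannot share a full side with $S$. Using this, follow the squares stacked along the line through $V$ and look at the tile that fills the angle region next to $S$ on the far side of the line.

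Here equitransitivity enters. All $s_1$-coronas are congruent, and the symmetry group maps corners of $S$ to corners of every other size-$s_1$ square. So a double corner at $V$ forces a configuration of double corners at every size-$s_1$ square in the same relative position. Then I would count. Compare the total boundary length of the sides of $S$ with the lengths contributed by neighboring tiles, or equivalently count corners meeting at vertices on $\partial S$. The aim is a small gap, of width less than $s_1$, wedged between $S$ and a larger neighbor along the straight line. Such a gap can only be filled by a square smaller than $s_1$, which is a contradiction. This argument is a local geometric case analysis around the double corner.

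For the second smallest size $s_2$, the same scheme applies, but now a neighbor of size $s_1$ may appear. Having already shown that $s_1$-squares form vortices, I would use the structure of the $s_1$-vortex: each side of an $s_1$-square is overhung by its neighbors in a rotational pattern. A double corner of an $s_2$-square $S'$ would then require a straight segment through a corner of $S'$. The only tiles small enough to fill the notch left there are $s_1$-squares, and their vortex structure forbids them from supplying the flush side needed. The main obstacle is this step: enumerating the ways a size-$s_1$ square can sit in the angle at a double corner of $S'$ while staying consistent with its own vortex corona and with unilaterality. I would handle it by fixing the position of the $s_1$-square relative to $V$ and checking that each placement either creates a gap narrower than $s_1$ or forces two equal squares to share a full side.
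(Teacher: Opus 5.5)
\textbf{There is a genuine gap.} Your reduction breaks at its first step. It is not true that every archetype other than VII has a double corner. The paper notes that only I--VI and XI--XV have double corners. Archetypes VIII, IX and X (side-type tuples $(1,3,4,1)$, $(1,3,2,4)$, $(4,3,4,3)$) have four single corners and are not vortices, and Table~\ref{tab:coronas} lists many 3- and 4-coronas of exactly these types. For example, take a square in a brick-like row, with its top and bottom sides each overhung at both ends and its left and right sides flush: it has no double corner. So ``every corner of $S$ lies in the interior of a side of a neighbor'' is strictly weaker than being a vortex. Even a complete exclusion of double corners would still leave you to show that the single corners all turn in the same rotational sense, and your plan never addresses this.

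The double-corner step itself is also not yet an argument:
\begin{itemize}
\item Equitransitivity only copies a hypothetical configuration to every congruent square, which gives no contradiction by itself. The paper, which simply cites Martini, Makai and Soltan, stresses that the result uses unilaterality alone.
\item The promised gap of width less than $s_1$ is never produced.
\item For $s_2$ you name the placement analysis as the main obstacle but do not resolve it.
\end{itemize}

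The missing idea is a short counting argument that excludes double corners and the non-vortex single-corner patterns at once. Say a side of $S$ is \emph{overhung} at its endpoint $V$ if a neighbor along that side contains $V$ in the relative interior of its side on that line. Let $S$ have size $s\in\{s_1,s_2\}$.
\begin{itemize}
\item Every side is overhung at least once. If some side of $S$ is overhung at neither end, its neighbors partition a segment of length $s$ into squares of size at least $s_1$. Either there is a single neighbor, of size $s$, sharing a full side with $S$; or (possible only for $s=s_2$) there are at least two, all of size $s_1$, and consecutive ones share a full side. Both contradict unilaterality.
\item Every corner supplies at most one overhang. At a corner $V$ the other tiles fill an angle of $3\pi/2$, and a tile containing $V$ in the interior of one of its sides uses $\pi$ of it. So at most one of the two sides meeting at $V$ is overhung there, and neither is exactly when $V$ is a double corner.
\end{itemize}
With four sides each needing an overhang and four corners each supplying at most one, every corner supplies exactly one and every side receives exactly one. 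If side $i$ is overhung at its terminal corner, that corner is used up, so side $i+1$ must be overhung at its terminal corner too, and this propagates around $S$. Hence there are no double corners and all overhangs have the same rotational sense, which is archetype VII (or its mirror image). The argument stops at $s_2$ because larger sides can be exactly covered by smaller squares of distinct sizes, e.g.\ $3=1+2$.
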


\noindent We note that Theorem \ref{thm:vortices} does not require the equitransitivity property - it depends only on unilaterality. 

The following lemma points out that, for a corona in a UE-$n$ tiling, a 1 or a 2 can only appear as the first or last entry of a side in a corona signature.

\begin{lemma}
    Let $s_{0}^{x_i}.s_1.s_2.\cdots .s_k$ be a side of a corona signature for a corona in a UE tiling by squares of sizes 1, 2, $\ldots$, $n$. Then $s_i \neq 1$ and $s_i \neq 2$ for all $i$ satisfying $0 < i < k$. In other words, a size-1 or -2 square cannot appear in the middle of the side of a UE tiling by squares of sizes 1, 2, $\ldots$, $n$. \label{lem:no-1-or-2}
\end{lemma}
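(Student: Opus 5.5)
We prove the lemma by contradiction, using Theorem \ref{thm:vortices}: in a UE tiling by squares of sizes $1,2,\ldots,n$, the $1$-coronas and $2$-coronas are vortices (skeletal archetype VII). So it suffices to show that a square appearing in the middle of a side of another square's corona cannot have a vortex corona.

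Suppose $s_i\in\{1,2\}$ with $0<i<k$ in the side $s_0^{x}.s_1.\cdots.s_k$ of the corona of a central square $S$, and let $Q$ be this size-$s_i$ square. Orient the picture so the side of $S$ in question is the top side $\overline{V_0V_1}$. Because $i$ is strictly between $0$ and $k$, $Q$ is neither the first square along the side (the one containing $V_0$) nor the first square containing the terminal corner $V_1$. Since the squares along the side are listed in order, $Q$ therefore lies with its bottom side entirely inside the open segment $V_0V_1$; the squares $s_0$ and $s_k$ cover the endpoints, and $Q$ sits strictly between them. Thus the bottom side of $Q$ is contained in the top side of $S$.

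Now look at the corona of $Q$. Its skeletal extension along the bottom side runs along the line containing $\overline{V_0V_1}$, and the bottom side of $Q$ lies in the interior of the top side of $S$. So both bottom corners of $Q$ lie in the relative interior of a side of $S$. At each of them the edge of the tiling continues straight along the top of $S$ in both directions: the segment of the top side of $S$ just beyond the corner of $Q$ is still an edge of the tiling. So both bottom corners of $Q$ are double corners in its skeletal extension, since at each the boundary extends both vertically (the side of $Q$) and horizontally past $Q$ (the side of $S$). But in a vortex (archetype VII) no corner of the central square is a double corner: at each corner the skeletal extension continues in exactly one direction, in the pinwheel pattern. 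Indeed, archetypes I--VI and XI--XV are exactly those with double corners, and VII is not among them. This contradicts Theorem \ref{thm:vortices}, so $s_i\neq 1,2$.

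The main obstacle is the second step: justifying rigorously from the signature conventions that a middle entry sits with its full side strictly inside the side of $S$, with both of its corners in the relative interior of that side. This needs care with the definitions of "first square containing the initial vertex" and "first square containing the terminal corner", and with degenerate cases such as zero prehang. I would handle these by noting that the squares along a side form a chain of intervals covering $\overline{V_0V_1}$, ordered by their left endpoints, so any square strictly between the first and the last has both endpoints inside the open segment. That gives the two double corners, and with them the contradiction with the vortex archetype.
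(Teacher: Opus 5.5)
\textbf{There is a genuine gap.} Your strategy is the same as the paper's: contradict Theorem~\ref{thm:vortices} by showing that the middle square $Q$ cannot have a vortex corona. However, the step that is supposed to produce the contradiction is false.

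Suppose a bottom corner $P$ of $Q$ lies in the relative interior of the top side of $S$. Then the only side of $Q$ that is extended beyond $P$ is its bottom side, running horizontally. The vertical side of $Q$ stops at $P$, because the interior of $S$ lies directly below. The vertical edge you count is $Q$'s own side, not an extension of it. So $P$ is a T-vertex and a \emph{single} corner of $Q$, not a double corner. Each corner of $Q$ therefore has exactly one extension, just as in a vortex, and ``a vortex has no double corners'' gives no contradiction.

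What actually fails is the pinwheel pattern: both extensions come from the same side. In $Q$'s corona the bottom side is the single square $S$ with positive prehang and positive afterhang, i.e.\ a type~4 side. In a vortex (side types $(2,2,2,2)$, up to reflection) every side is extended beyond exactly one of its endpoints. This is the paper's case~(1): the skeletal extension of $Q$ runs in opposite directions along the side of $S$.

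Second, your claim that every square strictly between the first and the last has both endpoints in the open segment fails when the side of $S$ begins with a double corner, $x_0=s_0$. Then $s_0$ lies entirely to the left of $V_0$, and $s_1$ has its bottom-left corner exactly at $V_0$. So for $i=1$ (with $k\ge 2$) your setup does not apply. The paper treats this separately as case~(2).

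In that case four tiles meet at $V_0$: $S$, $s_0$, $Q=s_1$, and the last square of the left side of $S$. Both the bottom side and the left side of $Q$ extend beyond $V_0$: leftward under $s_0$, and downward along the left side of $S$. So here the corner really is a double corner of $Q$, which again rules out a vortex.

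With these two repairs — the opposite-directions (type~4) argument in the generic case, and a separate treatment of a side that begins with a double corner — your argument becomes the paper's proof.
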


\begin{proof}
    For the sake of contradiction, let $S$ be a square of size $1$ or $2$ situated along side of a corona with central square $C$ suppose and $S$ is not the first or last square of that side. Note that since $S$ is not the last square of $S$, it does not contain the terminal corner of the side of $C$ along which $S$ is situated, but it may contain the initial vertex. Consider the two cases (1) $S$ does not contain the initial vertex of the side of $C$ along which it lies, or (2) it does. 
    
    For case (1), skeletal extension of $S$ extends in opposite directions along the side of $C$ on which $S$ lies, so the corona of $S$ is not a vortex, in violation of Theorem \ref{thm:vortices}. For case (2), $S$ must be the second square of that side of the corona which implies that this side of the corona must begin with a double corner. Thus, the skeletal extension of $S$ extends in two directions at this double corner of $C$, so the corona of $S$ cannot be a vortex and again we have a violation of Theorem \ref{thm:vortices}. \end{proof}

Next, for UE tilings by squares of integer sizes $1, 2, \ldots, n$ where $n \geq 2$, we have the following fact that somewhat limits the complexity of such tilings when $n$ is relatively small. 

\begin{theorem} For any UE tiling by squares of integer sizes $1, 2, \ldots, n$ where $3 \leq n \leq 6$, the skeletal archetype of any corona cannot contain adjacent double corners. Thus, in such tilings only skeletal archetypes I - X are possible.\label{thm:1-10}\end{theorem}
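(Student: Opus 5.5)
Suppose, for contradiction, that some corona in a UE tiling $\mathscr{T}$ by squares of sizes $1,2,\ldots,n$ with $3\le n\le 6$ has adjacent double corners. Let its central square be $C$, of size $r$, and let $V_1,V_2$ be consecutive double corners joined by the side $s=\overline{V_1V_2}$ of length $r$. At a double corner the skeletal extension continues straight past the corner in both directions. So the neighbor $A$ of $C$ lying across $s$ that contains $V_1$ has a side flush with the line of $s$ and a corner exactly at $V_1$. The same holds for the neighbor $B$ across $s$ containing $V_2$, with a corner exactly at $V_2$. The squares lying across $s$ therefore exactly tile the segment $s$: their sizes $s_0,\ldots,s_k$ (all in $\{1,\ldots,n\}$) satisfy $s_0+\cdots+s_k=r$. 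Also, no single square across $s$ has size $r$, since that would be a size-$r$ square sharing a full side with $C$, violating unilaterality. Hence $k\ge 1$.

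\textbf{Step 1: restrict the sizes along $s$.} By Theorem \ref{thm:vortices}, the coronas of the $1$- and $2$-squares are vortices, so such squares have no double corners. The first square $A$ across $s$ has a corner at $V_1$ where the edges of the tiling continue in both directions along its own skeleton, because $C$'s side and the perpendicular extension both pass through that corner. This makes that corner a double corner of $A$'s corona. So $s_0\ge 3$, and symmetrically $s_k\ge 3$. Combining this with Lemma \ref{lem:no-1-or-2} for the middle entries, every $s_i\ge 3$. Thus $r=\sum s_i\ge 6$ with $k\ge1$, which already forces $r\in\{6\}$, $n=6$, and the decomposition $s=3+3$. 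I need to double-check the claim that $V_1$ is a double corner of $A$. If only one direction is guaranteed, I would instead argue with the square across the perpendicular extension at $V_1$, which meets $A$ at a corner.

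\textbf{Step 2: kill the remaining case $r=6$, $s=3+3$.} Here two size-3 squares $A,B$ sit side by side across $s$, flush at both ends. Their common corner lies in the interior of $s$, so the edge between them meets $C$ in a T-junction. Now examine the 3-corona of $A$. It contains $C$ along its whole side and $B$ along a full perpendicular side, with corners at both ends of the side against $C$. I would show that this forces $A$'s corona to have two double corners at the ends of that side, so that $A$ itself has adjacent double corners. Then I would show that equitransitivity propagates this structure: every 3-square must have a 6 flush along a full side together with adjacent 3-squares. The squares $A$ and $B$ share a full side of length 3, which already contradicts unilaterality. Indeed, $A$ and $B$ are both of size 3 and meet along the segment from the common corner on $s$ perpendicular to $s$, and their sides across from $C$ coincide in length and position, so they share an entire side.

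\textbf{Main obstacle.} That final observation actually simplifies Step 2. Any two equal squares flush against the same line with a common corner meet along a full side. More generally, consecutive equal entries $s_i=s_{i+1}$ are forbidden, because flush squares along $s$ of equal size sharing the corner line coincide side-for-side. So the real work is Step 1: rigorously justifying that the end squares $A,B$ have sizes at least $3$. I would do this by showing that their coronas are not vortices, since at $V_1$ an edge of the tiling runs straight through a corner of $A$. Once that holds, the sums $r=\sum s_i$ with all $s_i\ge3$, no two consecutive entries equal, $k\ge1$, and $r\le 6$ have no solutions, since the only candidate is $3+3$. This gives the theorem, and the conclusion that only archetypes I--X occur follows since XI--XV are exactly those with adjacent double corners.
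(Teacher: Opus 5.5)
Your argument is correct and is essentially the paper's: at a double corner four squares meet, so both sides of $A$ through $V_1$ continue as edges of the tiling. This resolves the doubt you flag in Step 1, and it shows that the end squares across $s$ are not vortices and hence have size at least $3$ by Theorem \ref{thm:vortices}, while the interior squares are handled by Lemma \ref{lem:no-1-or-2}. The only difference is the finish: the paper enumerates every composition of $r\le 6$ with no two consecutive parts equal and notes that each contains a $1$ or a $2$, whereas your observation that all parts $\ge 3$ leaves only $6=3+3$, excluded by unilaterality, is tidier, and the equitransitivity detour sketched in Step 2 is not needed.
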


\begin{proof} Suppose that a corona with central square $C$ of size $c$ has adjacent double corners. By Theorem \ref{thm:vortices}, $c \geq 3$. Let $\mathcal{S}$ be the side of the corona that begins at the first of these adjacent double corners (as always, reading clockwise). Then by Theorem \ref{thm:vortices}, the squares at these adjacent double corners cannot be of size 1 or 2, nor may the second or last squares of $S$ be of size 1 or 2 (these are the squares of $S$ that are adjacent to the double corners). Additionally, by Lemma \ref{lem:no-1-or-2}, none of the other squares of $\mathcal{S}$ are of size 1 or 2, so none of the squares along the side of $C$ between double corners can be of size 1 or 2.

Now consider the size $c$ in relation to the sequence $(s_1, s_2, \ldots, s_n)$ of sizes of squares from the side $\mathcal{S}$ that lie between the double corners of $\mathcal{C}$. Note that this sequence must have more than one term to avoid a violation of unilaterality. If $c = 3$, the only possibilities for $(s_1, s_2, \ldots, s_n)$ in which consecutive terms are not equal (to avoid a violation of unilaterality) are $(1,2)$ and $(2,1)$; if $c = 4$, the possibilities are $(1,3)$, $(3,1)$, and $(1,2,1)$; if $c = 5$, the possibilities are $(1,4)$, $(4,1)$, $(2,3)$,  $(3,2)$, $(1,3,1)$, and $(2,1,2)$; and finally, if $c = 6$, the possibilities are $(1,5)$, $(5,1)$, $(2,4)$, $(4,2)$, permutations of $(1,2,3)$, $(1,4,1)$, $(2,1,2,1)$, and $(1,2,1,2)$. Every possibility presented contains a 1 or a 2, but as was previously established, none of the squares between adjacent double corners can be of size 1 or 2. \end{proof}

It is possible to have adjacent double corners when $n \geq 7$. For example, the sequence of sizes of squares between the double corners could be $(3,4)$ for a central square of size $7$ or $(3,5)$ for a central square of size $8$, and etc.

Observe that if the signatures of the coronas of two distinct sizes of squares involve only those two sizes of squares, then any UE-$n$ with tiling containing coronas with these signatures must have $n = 2$. For example, suppose  that $\mathcal{C}_1$ is a 1-corona containing only squares of size 1 and 2 and suppose that $\mathcal{C}_2$ is a 2-corona containing only squares of size 1 and 2. If $\mathcal{C}_1$ and $\mathcal{C}_2$ were to appear in any UE-$n$ tiling with $n > 2$, then there would be a third size of square somewhere in that tiling that neighbors a size-1 or a size-2 square, contrary to the fact that $\mathcal{C}_1$ and $\mathcal{C}_2$ contain only size-1 and size-2 squares. A similar line of reasoning can be used to see that, in a UE-$n$ tiling with $n > 3$, there cannot be coronas of three distinct sizes of squares (say, $a$, $b$, and $c$) in which those three coronas contain only squares of sizes $a$, $b$, and $c$. We generalize this observation in the following theorem, whose proof follows the same logic.

\begin{theorem}
    Suppose that $\mathcal{T}$ is a UE-$n$ tiling and let $\mathcal{S} = \{\mathcal{C}_1, \mathcal{C}_2, \ldots, \mathcal{C}_n\}$ be the set of distinct corona signatures of coronas in $\mathcal{T}$. For any nonempty subset $\mathcal{R}$ of $\mathcal{S}$, let $N(\mathcal{R})$ denote the total number of different sizes of squares contained in all of the signatures in $\mathcal{R}$, and let $|\mathcal{R}|$ denote the number of signatures in $\mathcal{R}$. If $N(\mathcal{R}) < n$, then  $N(\mathcal{R}) > |\mathcal{R}|$.\label{thm:no-closed-subsets}
\end{theorem}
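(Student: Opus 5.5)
We argue by contradiction, following the observation made just before the statement. Each signature $\mathcal{C}_i$ is the corona signature of a square of one particular size, so $\mathcal{S}$ is in bijection with the $n$ sizes $1,\ldots,n$; write $c(\mathcal{C})$ for the size of the central square of $\mathcal{C}$. For a nonempty $\mathcal{R}\subseteq\mathcal{S}$, let $Z(\mathcal{R})$ be the set of sizes appearing in the signatures of $\mathcal{R}$, so $N(\mathcal{R}) = |Z(\mathcal{R})|$. We include the central square as appearing in its own corona, since it is part of the corona by definition. In particular $c(\mathcal{C}) \in Z(\mathcal{R})$ for every $\mathcal{C}\in\mathcal{R}$. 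Because the central sizes of distinct signatures are distinct, this already gives $N(\mathcal{R}) \ge |\mathcal{R}|$. It therefore suffices to rule out equality when $N(\mathcal{R})<n$.

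So suppose $N(\mathcal{R}) = |\mathcal{R}|$ and $N(\mathcal{R}) < n$. Then the injection $\mathcal{R} \to Z(\mathcal{R})$, $\mathcal{C}\mapsto c(\mathcal{C})$, is a bijection. Hence every size in $Z(\mathcal{R})$ is the center of some corona in $\mathcal{R}$. Equivalently, $\mathcal{R}$ is exactly the set of coronas of the sizes in $Z := Z(\mathcal{R})$, and each such corona contains only squares with sizes in $Z$. Since $|Z|<n$, some size $m\notin Z$ occurs in the tiling.

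Next we show that the union $U$ of all tiles of $\mathcal{T}$ with sizes in $Z$ is closed under taking neighbors. Let $T$ be a tile with size in $Z$. By equitransitivity, or simply because all coronas of a given size are congruent, the corona of $T$ has the signature in $\mathcal{R}$ for its size. Hence every neighbor of $T$ has size in $Z$.

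Now take a tile $T$ of size in $Z$ and a tile $T'$ of size $m$. Join an interior point of $T$ to an interior point of $T'$ by a line segment. Since the tiling is locally finite (the squares have bounded-below size), the segment meets only finitely many tiles. Listing them in order along the segment gives a chain $T = T_0, T_1, \ldots, T_k = T'$ in which consecutive tiles intersect, that is, are neighbors. By the closure property, induction shows every $T_j$ has size in $Z$. This contradicts the fact that $T'$ has size $m\notin Z$. Hence $N(\mathcal{R}) > |\mathcal{R}|$.

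The main point needing care is this chain-of-neighbors step. To justify that consecutive tiles along the segment actually intersect, we use the following facts. The tiles are closed, their union covers the segment, and only finitely many tiles meet the segment. Thus the segment is partitioned into finitely many closed pieces, one for each tile meeting it. Connectedness of the segment then forces every pair of pieces that are consecutive along it to share a point. Apart from this, the argument reduces to the counting $N(\mathcal{R}) \ge |\mathcal{R}|$ and the surjectivity observation above.
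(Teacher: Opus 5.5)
Your proof is correct and is the paper's own argument made rigorous. The paper only appeals to the preceding observation that coronas whose signatures use nothing but their own central sizes leave no way for any other size to neighbor them. You add the count $N(\mathcal{R})\ge|\mathcal{R}|$ via central sizes, which the paper's code also uses and which is genuinely needed (otherwise row 17 of Table~\ref{tab:29-tuples} would give $N(\mathcal{R})=|\mathcal{R}|=2<4$), together with an explicit connectivity step. One small wording fix there: the intersections of the tiles with the segment need not partition it, since they overlap where the segment runs along an edge. So phrase the conclusion as connectedness of the intersection graph of a finite closed cover of a connected set. Alternatively, observe that the union of tiles with sizes in $Z$ and the union of all other tiles are nonempty closed sets covering $\R^2$, and therefore they meet.
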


\section{Algorithms}

The algorithms discussed in this section were developed to produce all possible 4-tuples of signatures of coronas in UE-4 tilings by squares of consecutive integer sizes 1, 2, 3, and 4, and we note that these algorithms are generalized so that they can be applied to the larger problem of classifying all UE-$n$ tilings by squares of consecutive integer sizes $1, 2, \ldots n$ where $n \geq 5$, and so we explain the algorithms in these more general terms. The main steps of the algorithm are the following:

\begin{enumerate}
    \item For each $i \in \{1,2,...,n\}$, generate all $i$-coronas that are viable in that they do not contain local configurations that violate the unilaterality requirement, Theorem \ref{thm:vortices}, Lemma \ref{lem:no-1-or-2}, or Theorem \ref{thm:1-10} (for $n \leq 6$). 
    \item For each tuple $(\mathcal{C}_1, \mathcal{C}_2, \mathcal{C}_3, \ldots \mathcal{C}_n)$ where each $\mathcal{C}_i$ is a viable $i$-corona found in Step 1, determine if these coronas are compatible with each other in terms of equitransitivity. This means that any partial $i$-coronas appearing in $\mathcal{C}_j$ must be found in $\mathcal{C}_i$ for all pairs $(i,j)$ where $i,j \in \{1,...,n\}$.
\end{enumerate}
These two steps can be completely automated. There is a third step that we have not found a way to automate, which is: \begin{enumerate}
    \item[3.] For each tuple of compatible coronas $(\mathcal{C}_1, \mathcal{C}_2, \mathcal{C}_3, \ldots \mathcal{C}_n)$ identified in Step 2, determine if a UE-$n$ tiling(s) can be formed from these coronas.
\end{enumerate} In the case that $n=4$, there were so few quadruples $(\mathcal{C}_1, \mathcal{C}_2, \mathcal{C}_3, \mathcal{C}_4)$ that Step 3 could be done by hand. In order to scale this problem to larger values of $n$, a process for automating Step 3 will need to be developed, but we leave that as an open problem.

With this overview of the general strategy in place, we now go into the details of the algorithms.

\subsection{Generating Viable Coronas} \label{subsec:viable_sides}
We break this part of the algorithm into a two subparts: \begin{enumerate}
    \item[(i)] (Viable Side Generation) Recall that a corona signature has four parts called sides having the form $s_0^{x}.s_1.\ldots.s_{k}$ where $s_i \in \{1,2,\ldots,n\}$ for each $i$. For each size $C$ of central square, the algorithm will generate all possible such sides with integral exponent $x$, being careful that these sides avoid violating the unilaterality requirement, Theorem \ref{thm:vortices}, or Lemma \ref{lem:no-1-or-2}. These sides will come in 4 varieties, as explained in Section \ref{subsubsec:sides}.
    \item[(ii)] (Stitching Viable Sides Together) After calculating all viable sides for each central size $C$ of square, these sides are combined in all possible ways, again being careful to avoid introducing any violations of the unilaterality requirement, Theorem \ref{thm:vortices}, or Theorem \ref{thm:1-10} while removing from consideration any redundant copies of coronas so created. Combining sides to create coronas is organized around skeletal archetypes, as described in Section \ref{subsubsec:stitching}.
\end{enumerate}

\subsubsection{Viable Side Generation}\label{subsubsec:sides}
Recalling the corona notation we introduced in Section \ref{sec:prelims},  the squares along a single side of a corona can be expressed in the form $s_{0}^{x_i}.s_1. \cdots .s_{n_i}$ where the exponent $x_i$ specifies the placement of these squares along a side relative to the central square. In particular, the exponent $x_i$ is the amount of \emph{prehang} of the first square relative to the central square, and there might also be some \emph{afterhang} $y_i$ along a side (see Figure \ref{fig:hang}) which is computed as \begin{equation} y_i = \sum_{k = 0}^{n_i} s_k - x_i -C. \label{eqn:afterhang} \end{equation}

\begin{figure}[h]
\centering
\includegraphics[width=.5\textwidth]{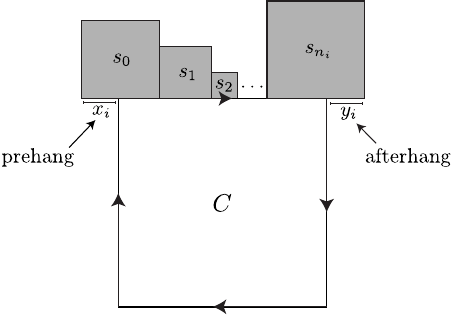}\caption{Prehang and afterhang of side $s_{0}^{x_i}.s_1.s_2. \cdots .s_{n_i}$.}\label{fig:hang}
\end{figure}

We define \emph{side types} 1 - 4 as a function of the values of $x_i$ and $y_i$ for a given side as follows:

\begin{itemize}
    \item Type 1: $x_i = 0$ and $y_i > 0$ (afterhang only)
    \item Type 2: $x_i > 0$ and $y_i = 0$ (prehang only)
    \item Type 3: $x_i = 0$ and $y_i = 0$ (no prehang or afterhang)
    \item Type 4: $x_i > 0$ and $y_i > 0$ (both prehang and afterhang)
\end{itemize}

Note that the afterhang $y_i$ satisfies $0 \leq y_i < s_{n_i}$; we cannot have $y_i = s_{n_i}$ for that would imply that $s_{n_i - 1}$ contains the terminal corner of the corresponding side of the central square, contradicting the convention that the last square $s_{n_i}$ is the first square of a that side containing the terminal corner of that side of the central square. Also note that $x_i = s_0$ if and only if there is a double corner at the beginning of that side. Thus, any side of a corona (as seen in the list of skeletal archetypes in Figure \ref{fig:archetypes}) fits into exactly one of these four side types, with double corners occurring only at the beginning of sides of type 2 or 4 when the exponent equals the size of the first square of such a side.\\

\noindent \textbf{\underline{Type 1 sides}}\\ Given that $x_i = 0$, $y_i > 0$ , and the central square is of size $C$, we see that the sum of the sizes of the squares $s_k$ for a type 1 side must satisfy \begin{equation} C + 1 \leq \sum_{k = 0}^{n_i}s_k \leq C + n - 1\label{eqn:partition_constraint_1} \end{equation} where $n$ is the maximum size square. For each integer $j$ satisfying $C + 1 \leq j \leq C + n - 1$, let $S_j$ be the set of finite sequences of numbers in $\{1,2,\ldots,n\}$ whose sums are exactly $j$ (i.e., $S_j$ is a set of \emph{integer compositions} of $j$ written in list form). If $(s_0, s_1, \ldots, s_{\ell}) \in S_j$, then by construction, $s_{0}^{0}.s_1. \ldots .s_{\ell}$ satisfies Equation \ref{eqn:partition_constraint_1} and is therefore a \emph{potential} type 1 side of a $C$-corona, but it may be invalid for a number of reasons, so we filter out all such invalid sides as follows:
\begin{enumerate}
    \item Remove $s_{0}^{0}.s_1. \ldots .s_{\ell}$ if $s_0 = C$; this would be a violation of unilaterality.
    \item Remove $s_{0}^{0}.s_1. \ldots .s_{\ell}$ if $s_i = s_{i+1}$ for any integer $i$ satisfying $0 \leq i < \ell$; this would be a violation of unilaterality.
    \item $s_{0}^{0}.s_1. \ldots .s_{\ell}$ if $s_i = 1$ or $s_i = 2$ for any integer $i$ satisfying $1 < i < \ell$; this would violate Lemma \ref{lem:no-1-or-2}.
    \item Remove $s_{0}^{0}.s_1. \ldots .p_{\ell}$ if $y_i$ as calculated in Equation \ref{eqn:afterhang} does not satisfy $0 < y_i < s_{\ell}$.
\end{enumerate}

\noindent \textbf{\underline{Type 2 sides}}\\ Producing and filtering the remaining side types are handled in a very similar way as is done for type 1 sides, with minor variations. For type 2 sides, since the prehang $x_i$ satisfies $1 \leq x_i, \leq s_0$, a side $s_{0}^{x_i}.s_1. \cdots .s_{\ell}$ must satisfy \begin{equation} C + 1 \leq \sum_{k = 0}^{n_i}s_k \leq C + n, \label{eqn:partition_constraint_2} \end{equation} prompting us to produce a set $S_j$ of integer compositions of $j$ for each integer $j$ satisfying $C + 1 \leq j \leq C + n$. Since the afterhang $y_i = 0$ for type 2 sides, the value $j-C$ must be the amount of prehang for such a side, so for each such $j$, let $x = j-C$ and let $(s_0, \ldots, s_{\ell}) \in S_j$, then consider the potential type 2 side $s_{0}^{x}.s_1. \cdots .s_{\ell}$ where $x$ satisfies $x \leq s_0$, applying the following filters:

\begin{enumerate}
    \item Remove $s_{0}^{x}.s_1. \cdots .s_{\ell}$ if $s_i = s_{i+1}$ for any integer $i$ satisfying $0 \leq i < \ell$; this would be a violation of unilaterality.
    \item Remove $s_{0}^{x}.s_1. \cdots .s_{\ell}$ if $s_i = 1$ or $s_i = 2$ for any integer $i$ satisfying $0 < i < \ell$; a size-1 or -2 square appearing in the middle of a side would result in a violation of Lemma \ref{lem:no-1-or-2}.
    \item Remove $s_{0}^{x}.s_1. \cdots .s_{\ell}$ if $x = s_0 = 1$ or $2$; a square of size 1 or 2 cannot be the first square of a side beginning with a double corner as this results in a violation of Theorem \ref{thm:vortices}.
    \item Remove $s_{0}^{x}.s_1. \cdots .s_{\ell}$ if $x = s_0$ and $s_1 = C$; this would be a violation of unilaterality.
\end{enumerate}

\noindent \textbf{\underline{Type 3 sides}}\\ Type 3 sides are the simplest to generate and filter. For side type 3, we must have \begin{equation}
    \sum_{k = 0}^{n_i}s_k = C, \label{eqn:partition_constraint_iii}
\end{equation} and then for each $(s_0, \ldots, s_{\ell}) \in S_C$, we have $s_{0}^{0}.s_1. \cdots .s_{\ell}$ is a potential side. We then filter out potential sides as follows:

\begin{enumerate}
    \item Remove $s_{0}^{0}.s_1. \cdots .s_{\ell}$ if $s_0 = C$ or $s_{\ell} = C$; this would be a violation of unilaterality.
    \item Remove $s_{0}^{0}.s_1. \cdots .s_{\ell}$ if $s_i = s_{i+1}$ for any integer $i$ satisfying $0 \leq i < \ell$; this would be a violation of unilaterality.
    \item Remove $s_{0}^{0}.s_1. \cdots .s_{\ell}$ if $s_i = 1$ or $s_i = 2$ for any integer $i$ satisfying $1 < i < \ell$; a size-1 or -2 square appearing in the middle of a side would result in a violation of Lemma \ref{lem:no-1-or-2}.
\end{enumerate}

\noindent \textbf{\underline{Type 4 sides}}\\ For type 4 sides, we begin by noting that the sum of the sizes of the squares $s_k$ must satisfy the equation \begin{equation}
    C+2 \leq \sum_{k = 1}^{n_i}s_k \leq C + 2n - 1.
\end{equation} For each integer $j$ satisfying $C+2 \leq j \leq C + 2n - 1$ we again form the set $S_j$. There is a complication with this side type that makes it fundamentally different than the other three side types; this complication is that for some $(s_0, \ldots, s_{\ell}) \in S_j$, it is possible to have a valid corresponding type 4 side $s_{0}^{x_i}.s_1. \cdots .s_{\ell}$ where $x_i$ is not an integer. The idea here is that, as depicted in Figure \ref{fig:hang}, it may be possible to continuously slide the squares of a type 4 side along the line where it meets the central square, which would allow the prehang exponent $x_i$ to vary continuously. The algorithm for enumerating type 4 sides will depend on $x_i$ being an integer, but as it turns out, the valid integral values of $x_i$ will be sufficient to represent the nonintegral values. A few conditions force $x_i$ to be an integer. First, if any of the $s_i$ equal $1$ or $2$, the fact that size-$1$ and size-$2$ squares must be vortices (Theorem \ref{thm:vortices}) forces $x_i$ to be an integer satisfying $0 < x_i \leq s_0$. Also, if a type 4 side is in a corona such that either the square immediately preceding or succeeding that side is a size-1 or size-2 square, Theorem \ref{thm:vortices} again prevents the type 4 side from having a noninteger prehang $x_i$. For $(s_0, \ldots, s_{\ell}) \in S_j$ with $s_i \neq 1$ and $s_i \neq 2$ for all $i$, a type 4 side of the form $s_{0}^{x_i}.s_1. \cdots .s_{\ell}$ is called a \emph{faulty} type 4 side.  A corona containing a faulty side that is not preceded or succeeded by a size-1 or -2 square will be called a \emph{faulty} corona. The extension of a side of the central square of a faulty corona that bears a faulty type 4 side is called \emph{fault line} of the corona. It will be seen in Section \ref{subsec:faulty_tilings} that for UE-4 tilings by squares of sizes 1, 2, 3, and 4 in which both the 3- and 4-coronas are faulty, the fault lines of these faulty 3- and 4-coronas extend infinitely in both directions in UE-$4$ tiling containing these coronas, forming equally spaced parallel lines in these tilings; we therefore define a UE-4 tiling to be a \emph{faulty tiling} if its 3- and 4-coronas are faulty. We emphasize that the terminology ``faulty" relates to these fault lines, and not that the coronas a defective or wrong in some sense.

Figure \ref{fig:faulty_coronas} shows examples of faulty 3- and 4-coronas with faulty top sides. Notice in Figure \ref{fig:faulty_coronas} that the faulty side of the 3-corona has a representative in which the exponent is an integer (when $x = 1$), as does the faulty side of the 4-corona (when $y = 3$ or $y = 4$). We will see in Section \ref{subsec:faulty_tilings} that it is always the case that faulty 3- and 4-coronas for UE-4 tilings always have representatives with integer exponents in their signatures so that our computer algorithm, which operates by enumerating the exponents as integers, will identify representatives of all instances of faulty coronas.

\begin{figure}[h]
\centering
\begin{subfigure}{0.35\textwidth}
\centering
\includegraphics[width=.95\textwidth]{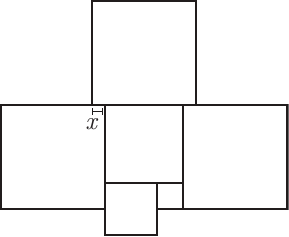}\caption{A faulty 3-corona with faulty top side $4^x$, $0 < x \leq 1$}\label{fig:faulty_3_top}
\end{subfigure}
\hspace{2cm}
\begin{subfigure}{0.35\textwidth}
\centering
\includegraphics[width=.95\textwidth]{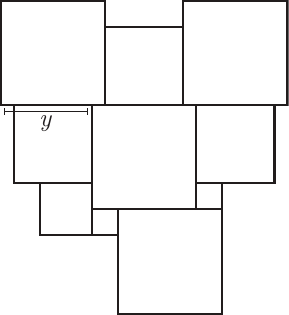}\caption{A faulty 4-corona with faulty top side $4^y.3.4$, $3 < y \leq 4$ }\label{fig:faulty_4}
\end{subfigure}
\caption{Faulty coronas}\label{fig:faulty_coronas}
\end{figure}

Setting aside consideration of faulty type 4 sides for now, the type 4 sides of the form $s_{0}^{x_i}.s_1. \cdots .s_{\ell}$ where $x_i$ is an integer satisfying $0 < x_i \leq s_0$ may be enumerated, and then we apply the following filters to remove invalid instances:
\begin{enumerate}
    \item Remove $s_{0}^{x_i}.s_1. \cdots .s_{\ell}$ if $s_i = s_{i+1}$ for any integer $i$ satisfying $1 \leq i < \ell$; this would be a violation of unilaterality.
    \item Remove $s_{0}^{x_i}.s_1. \cdots .s_{\ell}$ if $s_i = 1$ or $s_i = 2$ for any integer $i$ satisfying $1 < i < \ell$; a size-1 or -2 square appearing in the middle of a side would result in a violation of Lemma \ref{lem:no-1-or-2}.
    \item Remove $s_{0}^{x_i}.s_1. \cdots .s_{\ell}$ if $x_i = s_0 = 1$ or $2$; a square of size 1 or 2 cannot be the first square of a side beginning with a double corner as this results in a violation of Theorem \ref{thm:vortices}.
    \item Remove $s_{0}^{x_i}.s_1. \cdots .s_{\ell}$ if $x_i = s_0$ and $s_1 = C$; this would be a violation of unilaterality.
    \item Remove $s_{0}^{x_i}.s_1. \cdots .s_{\ell}$ if $y_i$ as calculated in Equation \ref{eqn:afterhang} does not satisfy $0 < y_i < s_{\ell}$.
\end{enumerate}

SageMath/Python implementations of the side types 1-4 generators are provided in Appendix \ref{apx:sides}.

\subsubsection{Stitching Together Viable Sides to Get Viable Coronas}\label{subsubsec:stitching}

Notice that each instance of a double corner among the skeletal archetypes (Figure \ref{fig:archetypes}) occurs at the beginning of a type 2 or a type 4 side (sides with positive prehang) such that the exponent is equal to the size of the first square; when the exponent is equal to the first square of a type $2$ or type $4$ side, we will indicate that with $2^{\ast}$ or $4^{\ast}$. With this notation, each of these skeletal archetypes can be viewed as a 4-tuple of side types, as summarized in Table \ref{tab:corona_tuples}. Note that by Theorem \ref{thm:1-10}, the coronas of squares in UE-$n$ tilings by squares of consecutive integer sizes $1, 2, \ldots, n$ can be only from skeletal archetypes I - X when $n = 4$, but when $n >6$ other types may be possible.

\begin{table}[h]
\centering
\begin{tabular}{c|c}
\textbf{Corona Skeletal Type} & \textbf{Side Type 4-Tuple}\\\hline
I & $(4^{\ast},3,4^{\ast},3)$\\
II & $(2^{\ast},2,4^{\ast},3)$\\
III & $(2^{\ast},2,2,2)$\\
IV & $(4^{\ast},3,2,2)$\\
V & $(2^{\ast},4,3,2)$\\
VI & $(2^{\ast},2,4,3)$\\
VII & $(2,2,2,2)$\\
VIII & $(1,3,4,1)$\\
IX & $(1,3,2,4)$\\
X & $(4,3,4,3)$\\
XI & $(2^{\ast},2^{\ast},2^{\ast},2^{\ast})$\\
XII & $(2^{\ast},2^{\ast},2,2^{\ast})$\\
XIII & $(2^{\ast},2^{\ast},4,3)$\\
XIV & $(2^{\ast},4^{\ast},3,2)$\\
XV &  $(2^{\ast},2^{\ast},2,2)$\end{tabular} \caption{Corona skeletal archetypes described as tuples of side types.}\label{tab:corona_tuples}
\end{table} 

Let the set of all viable sides of type $i$ (for a central square of some fixed size $C$) be denoted by $\mathcal{S}_i$  and let $\mathcal{S}_{2}^{\ast}$ and $\mathcal{S}_{4}^{\ast}$ be the subsets of $\mathcal{S}_2$ and $\mathcal{S}_4$ consisting of sides with exponents equal to the size of the first square (e.g., the sides beginning with double corners). Then, for example, we can compute all potential coronas of skeletal type V by computing the cartesian product $\mathcal{S}_{2}^{\ast} \times \mathcal{S}_{4} \times \mathcal{S}_{3} \times \mathcal{S}_{2}$. However, blindly combining 4 sides in such a manner can result in violations of unilaterality and in violations of Theorem \ref{thm:vortices} in the corona so formed. Additionally, there may be symmetric duplicates produced (for example, a corona of skeletal archetype I can have rotational symmetry through an angle of $\pi$). Thus, after computing these cartesian products, we remove nonviable or redundant 4-tuples $(s_0, s_1, s_2, s_3)$ from a cartesian product as follows:
\begin{enumerate}
    \item Remove $(s_0, s_1, s_2, s_3)$ if for some $i \in \{0,1,2,3\}$ the last entry of $s_i$ is equal to both the first entry of $s_{i+1}$ and the exponent of $s_{i+1}$ (where $i+1$ is computed modulo $4$). This ensures that unilaterality is not violated at the ends where two sides meet.
    
    \item For $i \in \{0,1,2,3\}$, let $x_i$ be the prehang of side $s_i$, let $C$ be the size of the center square, let $a$ be the size of the first square in $s_i$, and let $w$ be the size of the last square of the previous side $s_{i-1}$ (where $i-1$ is computed modulo 4). If any of the following conditions hold, we remove $(s_0, s_1, s_2, s_3)$:
    \begin{enumerate}
        \item $w = 1$ and $x_i > 1$.
        \item $w = 2$ and $x_i > 2$.
        \item $C \neq 1$ and $x_i = 1$ and $a = 2$ and $w \neq 1$.
    \end{enumerate} This ensures that Theorem \ref{thm:vortices} is not violated by squares of size 1 or 2 adjacent to the prehang line at the beginning of side $s_i$.
    
    \item For $i \in \{0,1,2,3\}$, let $y_i$ be the afterhang of side $s_i$, let $C$ be the size of the center square, let $b$ be the size of the last square in $s_i$, and let $z$ be the size of the first square of the next side $s_{i+1}$ (where $i+1$ is computed modulo 4). If any of the following conditions hold, we remove $(s_0, s_1, s_2, s_3)$:
    \begin{enumerate}
        \item $z = 1$ and $y_i > 1$. 
        \item $z = 2$ and $y_i > 2$.
        \item $C \neq 1$ and $y_i = 1$ and $b = 2$ and $z \neq 1$. 
    \end{enumerate} This ensures that Theorem \ref{thm:vortices} is not violated by squares of size 1 or 2 in adjacent to the afterhang line at the end of side $s_i$.

    \item Remove $(s'_0, s'_1, s'_2, s'_3)$ if it is a symmetric copy of a previously 4-tuple $(s_0, s_1, s_2, s_3)$. In particular, given $(s'_0, s'_1, s'_2, s'_3)$, compute the reflection of $(s'_0, s'_1, s'_2, s'_3)$ over the vertical and call this $(s''_0, s''_1, s''_2, s''_3)$, then compute the rotation by angles of $\pi/2$, $\pi$, and $3\pi/2$ of both $(s'_0, s'_1, s'_2, s'_3)$ and $(s''_0, s''_1, s''_2, s''_3)$. If any such transformation $(s'_0, s'_1, s'_2, s'_3)$ is equal to a previous 4-tuple $(s_0, s_1, s_2, s_3)$ in the list of 4-tuples of coronas, then remove $(s'_0, s'_1, s'_2, s'_3)$ from further consideration.
\end{enumerate}
In Table \ref{tab:coronas} we summarize the numbers of 1-, 2-, 3-, and 4-coronas found for each skeletal archetype after removing nonviable and redundant symmetric copies of coronas as described above. 

\begin{table}[h]
\centering
\begin{tabular}{c||c|c|c|c|c|c|c|c|c|c||c}
Center size & I & II & III & IV & V & VI & VII & VIII & IX & X & Total\\\hline
1 & 0 & 0 & 0 & 0 & 0 & 0 & 15 & 0 & 0 & 0 & 15\\
2 & 0 & 0 & 0 & 0 & 0 & 0 & 24 & 0 & 0 & 0 & 24\\
3 & 0 & 0 & 0 & 6 & 0 & 0 & 37 & 46 & 22 & 8 & 117\\
4 & 1 & 1 & 5 & 4 & 5 & 8 & 29 & 72 & 41 & 17 & 180\end{tabular}\caption{Summary of numbers of coronas of each skeletal archetype.}\label{tab:coronas}\end{table}

\subsection{Corona Compatibility Checking}

The last major step of our computerized process for identifying UE tilings by squares is consider every possible $n$-tuple of coronas $(\mathcal{C}_1, \mathcal{C}_2, \ldots, \mathcal{C}_n)$ (where $\mathcal{C}_i$ is an $i$-corona) and determine if this tuple corresponds to a tiling. A necessary condition is that the coronas in the tuple must be \emph{compatible}, which we define here: 

For a corona $\mathcal{C}$ central square $m$, let $s$ be any square in $\mathcal{C} \setminus \{m\}$. We define the \emph{partial corona} of $s$ in $\mathcal{C}$ to be the set of all squares in $\mathcal{C}$ that are neighbors of $s$ (see Figure \ref{fig:partial_coronas}). Let $\mathcal{C}_i$ be an $i$-corona and let $\mathcal{C}_j$ be an $j$-corona (allowing the possibility that $\mathcal{C}_i = \mathcal{C}_j$). We say $\mathcal{C}_i$ and $\mathcal{C}_j$ are \emph{compatible} if for each square $s \in \mathcal{C}_j$ of size $i$, a congruent copy of the partial corona of $s$ in $\mathcal{C}_j$ is also present in $\mathcal{C}_i$, and vice versa: for each square $t \in \mathcal{C}_i$ of size $j$, a congruent copy of the partial corona of $t$ in $\mathcal{C}_i$ is also present in $\mathcal{C}_j$. If $\mathcal{C}_i$ and $\mathcal{C}_j$ are not compatible, we say they are \emph{incompatible}. For example, the two coronas in  Figure \ref{fig:incompatible} are incompatible.

\begin{figure}[h]
\centering
\begin{subfigure}{0.45\textwidth}
\centering
\includegraphics[width=.85\textwidth]{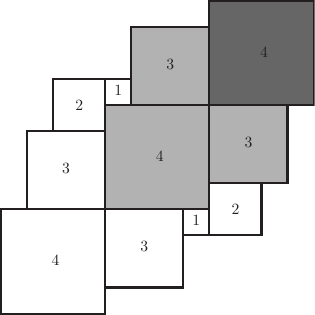}\caption{}\label{fig:partial_a}
\end{subfigure}
\begin{subfigure}{0.45\textwidth}
\centering
\includegraphics[width=.85\textwidth]{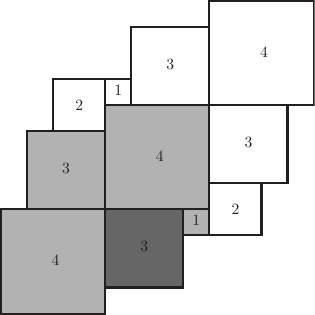}\caption{}\label{fig:partial-b}
\end{subfigure}
\caption{Partial coronas of squares in a corona can consist of 4 squares (as in the partial 4-corona shaded Figure \ref{fig:partial_a}) or 5 squares (as in the partial 3-corona in Figure \ref{fig:partial-b}).}\label{fig:partial_coronas}
\end{figure}

\begin{figure}[h]
\centering
\includegraphics[width=.75\textwidth]{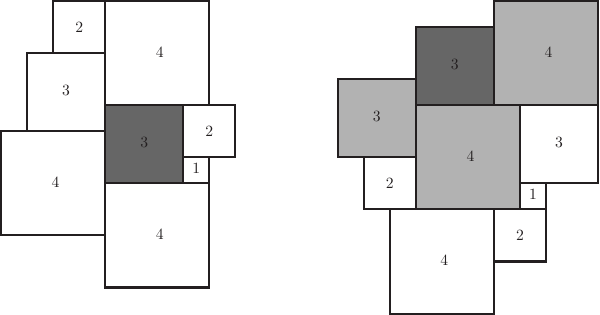}\caption{These coronas are incompatible because the shaded partial 3-corona inside the 4-corona is not present in the 3-corona.}\label{fig:incompatible}
\end{figure}

It is clear from the definition of equitransitivity that in any UE-$n$ tiling, any two coronas (including two congruent coronas) must be compatible. Therefore, the algorithm should remove from consideration any $n$-tuple of coronas $(\mathcal{C}_1, \mathcal{C}_2, \ldots, \mathcal{C}_n)$ (where $\mathcal{C}_i$ is an $i$-corona) containing an incompatible pair of coronas, $\mathcal{C}_i$ and $\mathcal{C}_j$, for such an $n$-tuple of coronas cannot be the coronas of a UE-$n$ tiling. The computer algorithm for filtering out such incompatible tuples of coronas involves several cases based on how the center square of a partial corona can be positioned along a side of a corona and whether or not that partial corona contains 4 squares (as in Figure \ref{fig:partial_a}) or 5 squares (as in Figure \ref{fig:partial-b}). The algorithm for compatibility checking can be examined in Appendix \ref{apx:compat}; we only note here that it is straightforward (if tedious with several cases): Given an $i$-corona and a $j$-corona, to check if they are compatible, the algorithm finds all partial $j$-coronas in the given $i$-corona and determines for each if that partial $j$-corona is present in the given $j$-corona, and vice versa.

In addition to filtering out any incompatible tuples of coronas, the algorithm applies Theorem \ref{thm:no-closed-subsets} to filter out tuples $(\mathcal{C}_1,\mathcal{C}_2,\mathcal{C}_3, \ldots \mathcal{C}_n)$ for which any number $k$ of these coronas with $k<n$ involves only $k$ sizes of squares.

\subsubsection{Summary of Compatible Coronas with Integral Exponents for \texorpdfstring{$n = 4$}{n=4}}
From Table \ref{tab:coronas}, it is seen that there are $15 \times 24 \times 117 \times 180 = 7,581,600$ 4-tuples of coronas with integral exponents to check for compatibility. After doing so (using the afore mentioned computer program in Appendix \ref{apx:compat}), there were only 29 such compatible 4-tuples of coronas with integral exponents, which are given in Table \ref{tab:29-tuples} and in Figures \ref{fig:cor-1} - \ref{fig:cor-29}. Some of these tuples have faulty 3-coronas (rows 23-29) and are discussed in more detail in Subsection \ref{subsec:faulty_tilings}. We have reordered these 4-tuples (compared to the order in which they were produced by the computer algorithm); in particular, rows 17-29 are a reordering of the original computer output which was done to group faulty coronas together by some common characteristics and moving the 4-tuples containing faulty coronas to appear last in the Table \ref{tab:29-tuples}.

\begin{table}[h]
    \centering
    \begin{tabular}{l|l|l|l|l}
    & 1-corona & 2-corona & 3-corona & 4-corona \\\hline
        1 & $2^1;2^1;3^2;4^3$ & $3^1;3^1;4^3.1;2^1.1$ & $3^3.4;1^0.2;3^2.2;4^1$ & $4^4.3.2;1^0.3;4^4.3.2;1^0.3$     \\
        2 & $2^1;2^1;3^2;4^3$ & $3^1;3^1;4^3.1;2^1.1$ & $3^3.4;1^0.2;3^2.2;4^1$ & $4^4.3.1;2^1.3;4^4.3.2;1^0.3$     \\
        3 & $2^1;2^1;3^2;4^3$ & $3^1;4^2;4^3.1;2^1.1$ & $4^0;1^0.2;4^2.3;4^0$ & $3^0.4;3^0.1;2^1.4;2^0.3$     \\
        4 & $2^1;3^2;2^1;3^2$ & $3^1;4^2;4^2;3^2.1$ & $4^2.2;1^0.2;4^2.2;1^0.2$ & $3^1.2;4^2.2;3^1.2;4^2.2$     \\
        5 & $2^1;3^2;3^2;3^2$ & $3^1;4^2;4^2;3^2.1$ & $1^0.3;1^0.2;4^2.2;1^0.3$ & $3^1.2;4^2.2;3^1.2;4^2.2$     \\
        6 & $2^1;3^2;3^2;4^3$ & $3^1;4^2;4^2;4^3.1$ & $1^0.3;1^0.2;4^2.3;4^0$ & $2^0.3;3^0.1;2^1.4;2^0.4$     \\
        7 & $2^1;3^2;3^2;4^3$ & $3^1;4^3.1;3^1;4^3.1$ & $3^3.4;2^0.1;3^1.1;4^1$ & $4^4.3.2;1^0.3;4^4.3.2;1^0.3$     \\
        8 & $2^1;3^2;3^2;4^3$ & $3^1;4^3.1;3^1;4^3.1$ & $3^3.4;2^0.1;3^1.1;4^1$ & $4^4.3.1;2^1.3;4^4.3.2;1^0.3$     \\
        9 & $2^1;3^2;4^3;3^2$ & $3^1;4^2;4^2;3^2.1$ & $4^0;1^0.2;4^2.2;1^0.4$ & $2^0.4;3^0.1;3^1.4;2^0.3$     \\
        10 & $2^1;3^2;4^3;3^2$ & $3^1;4^2;4^2;3^2.1$ & $4^0;1^0.2;4^2.1;2^1.4$ & $2^0.3;1^0.3;4^2.3;2^0.4$     \\
        11 & $2^1;3^2;4^3;3^2$ & $3^1;4^2;4^2;3^2.1$ & $4^0;1^0.2;4^2.1;2^1.4$ & $2^0.3;1^0.3;4^2.2;3^1.4$     \\
        12 & $2^1;3^2;4^3;3^2$ & $3^1;3^2.1;3^1;3^2.1$ & $4^0;1^0.2;3^1.2;1^0.4$ & $4^2.3;1^0.3;4^2.3;1^0.3$     \\
        13 & $2^1;3^2;4^3;4^3$ & $3^1;3^1;4^2;4^3.1$ & $4^0;1^0.2;3^2.4;2^0.3$ & $2^0.3;3^0.1;4^1.2;1^0.4$     \\
        14 & $2^1;3^2;4^3;4^3$ & $3^1;3^1;4^2;4^3.1$ & $4^0;1^0.2;3^2.4;2^0.3$ & $1^0.4;1^0.3;3^1.2;4^1.2$     \\
        15 & $2^1;3^2;4^3;4^3$ & $3^1;3^1;4^2;4^3.1$ & $4^0;1^0.2;3^2.2;4^2.3$ & $2^0.3;3^0.1;4^1.2;1^0.4$     \\
        16 & $2^1;3^2;4^3;4^3$ & $3^1;3^1;4^2;4^3.1$ & $4^0;1^0.2;3^2.2;4^2.3$ & $1^0.4;1^0.3;3^1.2;4^1.2$     \\
        17 & $3^2;3^2;3^2;4^3$ & $4^2;4^2;4^2;4^2$ & $4^1;4^2.1;3^1.1;3^1.1$ & $2^0.4;3^0.1;3^1.4;2^0.4$     \\
        18 & $3^2;3^2;4^3;4^3$ & $3^1;4^2;3^1;4^2$ & $4^1;4^3.2;4^2.1;3^1.1$ & $1^0.4;3^0.1;4^1.3;2^0.3$     \\
        19 & $3^2;3^2;4^3;4^3$ & $3^1;4^2;3^1;4^2$ & $4^1;4^3.2;4^2.1;3^1.1$ & $1^0.4;1^0.3;4^1.3;2^0.3$     \\
        20 & $3^2;4^3;3^2;4^3$ & $3^1;3^1;4^2;4^2$ & $4^1;4^3.2;3^2.2;4^2.1$ & $2^0.3;1^0.3;4^1.3;2^0.4$     \\
        21 & $3^2;4^3;3^2;4^3$ & $3^1;4^2;3^1;4^2$ & $4^1;4^1;4^3.2;4^2.1$ & $2^0.3;1^0.3;4^3.3;4^1.3$     \\
        22 & $3^2;4^3;3^2;4^3$ & $4^2;4^2;4^2;4^2$ & $4^1;4^2.1;4^1;4^2.1$ & $2^0.4;3^0.1;3^1.4;2^0.4$     \\   
        23 & $2^1;3^2;4^3;4^3$ & $3^1;4^3.1;3^1;4^3.1$ & $4^0;2^0.1;4^1;4^3.3$ & $1^0.4;3^0.1;4^1.3;3^0.2$     \\
        24 & $2^1;3^2;4^3;4^3$ & $3^1;4^3.1;3^1;4^3.1$ & $4^0;2^0.1;4^1;3^1.4$ & $1^0.4;1^0.3;3^2.4;3^0.2$     \\
        25 & $2^1;3^2;4^3;4^3$ & $3^1;4^3.1;3^1;4^3.1$ & $4^0;2^0.1;4^1;3^2.4$ & $1^0.4;1^0.3;3^1.4;3^0.2$    \\
        26 & $2^1;3^2;4^3;4^3$ & $3^1;4^3.1;3^1;4^3.1$ & $3^3.4;1^0.2;4^1;4^1$ & $4^4.3.1;4^1.1;2^1.3;4^3.3$     \\
        27 & $2^1;3^2;4^3;4^3$ & $3^1;4^3.1;3^1;4^3.1$ & $4^0;2^0.1;4^1;4^2.3$ & $1^0.4;1^0.3;4^2.3;3^0.2$     \\
        28 & $2^1;3^2;4^3;4^3$ & $3^1;4^3.1;3^1;4^3.1$ & $4^0;2^0.1;4^1;4^3.3$ & $1^0.4;1^0.3;4^1.3;3^0.2$     \\
        29 & $2^1;3^2;4^3;4^3$ & $3^1;4^3.1;3^1;4^3.1$ & $3^3.4;2^0.1;4^1;4^1$ & $4^4.3.4;3^0.1;4^1.1;2^1.3$    \\
    \end{tabular}
    \caption{The 29 4-tuples of Compatible Coronas Generated by Computer Search}
    \label{tab:29-tuples}
\end{table}

\section{Classification of All UE-4 Tilings By Squares of Sizes \texorpdfstring{$1$}{1}, \texorpdfstring{$2$}{2}, \texorpdfstring{$3$}{3}, and \texorpdfstring{$4$}{4}}

The compatible coronas with integral exponents of Table \ref{tab:29-tuples} serve as a baseline for determining all possible UE-4 tilings by squares of sizes $1$, $2$, $3$, and $4$. In this analysis, we find that rows 1-23 of Table \ref{tab:29-tuples} each correspond to a unique UE-4 tiling having exactly those coronas. These coronas can be characterized as all possible 4-tuples of coronas in which the 4-corona is not faulty. We discuss this in Section \ref{subsec:non-faulty}. 

For the coronas of row 24-29, both the 3- and the 4-coronas are faulty. It is found that these coronas are all very closely related and, in fact, may all be thought of as particular instances of a larger phenomenon. In particular, there is a single infinite family $\{\mathcal{T}_y\}$ of UE-4 tilings with $0 < y < 7$ such that each of the 4-tuples of coronas in rows 24-29 describes the coronas of the tilings $\mathcal{T}_y$ for integral values of $y$ in $0 < y < 7$. The tilings in $\{\mathcal{T}_y\}$ are all very similar in appearance and construction, with the difference between any two of them can be described as a uniform shift of rows along fault lines in the tilings. That is, one such tiling can be obtained from the other by simply shifting parallel rows of tiles. Each of rows 24-29 corresponds to coronas seen in $\mathcal{T}_y$ when $y$ is an integer value in $0 < y < 7$. Furthermore, it turns out that any possible 4-tuple of coronas with non-integral exponents in both the 3- and 4-coronas must be the coronas of exactly one tiling in $\{\mathcal{T}_y\}$ with $0 < y < 7$. In this way, we obtain the remainder of the complete set of potential 4-tuples of coronas for UE-4 tilings. Moreover, every such possible 4-tuple of coronas gives rise to a unique tiling \emph{except for one}! In particular, the coronas corresponding to $\mathcal{T}_{7/2}$ give rise to two distinct tilings. We explain this in detail in Section \ref{subsec:faulty_tilings}.

\subsection{Rows 1-23: Existence and Uniqueness of Tilings with These Coronas}\label{subsec:non-faulty}

 Consider any particular row from rows 1-23 of Table \ref{tab:29-tuples} and let $\mathcal{C}_4$ be the 4-corona of that row. It is our observation that any tiling $\mathcal{T}$ containing 4-coronas congruent to $\mathcal{C}_4$ must contain a uniquely determined infinite strip $\mathcal{S}$ of tiles formed entirely from congruent translated copies of $\mathcal{C}_4$. The strip $\mathcal{S}$ may then be expanded in a unique way by completing the coronas of squares on the boundary of the strip (according to what 4-tuple of coronas are under examination) and continuing in this manner until two parallel copies $\mathcal{S}'$ and $\mathcal{S}''$ of $\mathcal{S}$ are completed, one on either side of $\mathcal{S}$. From there, $\mathcal{S}'$ and $\mathcal{S}''$ expand uniquely to form additional parallel strips congruent to $\mathcal{S}$. Because congruent copies of $\mathcal{S}$ are produced and the process of expanding $\mathcal{S}$ was unique, the process repeats itself ad infinitum in a unique way, producing a unique periodic tiling with the coronas used in its construction. One then checks that this particular periodic tiling is equitransitive by checking that any two squares of the same size in a fundamental region of the tiling are in the same transitivity class with respect to the symmetries of the tiling; there are typically only a small number of squares in any such fundamental region, so this checked manually in a straightforward manner. Lastly, unilaterality is assured because the coronas of the 4-tuples of compatible coronas contain no violations of unilaterality by construction.

There are 23 cases to be checked separately, but for the sake of brevity, we will show why this is true of one of these cases, the others being very similar. To that end, consider the 4-tuple $(\mathcal{C}_1,\mathcal{C}_2,\mathcal{C}_3,\mathcal{C}_4)$ of coronas of row 2 of Table \ref{tab:29-tuples} (see Figure \ref{fig:cor-2}). Starting from a single copy of $\mathcal{C}_4$, notice that the coronas of size-2 and size-3 squares within this copy of $\mathcal{C}_4$ can be completed in a unique way so that these completed coronas are congruent to $\mathcal{C}_2$ and $\mathcal{C}_3$ (which is necessary for the tiling to be equitransitive); we have shaded the squares added to complete these coronas at the end of Step 1 in Figure \ref{fig:2-construction-step-1}. Next, referring to Steps 2-5 of Figure \ref{fig:2-construction-step-1}, it is seen that the coronas of the size-4 squares can be completed in a unique way and that these 4-coronas are translates of the original 4-corona. The newly introduced size-4 squares must then be completed using translates of the original 4-corona, and this continues forever, implying that any tiling with these coronas must have an infinite strip congruent to the one depicted at left in Figure \ref{fig:2-construction-step-2}. By inspection, one sees that the 3-coronas of this strip are completed in a unique way that maintains congruence to $\mathcal{C}_3$, and when these 3-coronas are completed, it is seen that the newly introduced size-4 squares can be completed in a unique way, resulting in two new strips of 4-coronas that are congruent to the original strip of 4-coronas (as at right in Figure \ref{fig:2-construction-step-2}). These two new strips of 4-coronas are both rotated through an angle of measure $\pi$ relative to the central strip of 4-coronas. Again, we may now repeat this process of strip building ad infinitum, resulting in a tiling of alternating strips of copies of 4-coronas. Because every step of this process was uniquely determined by the restriction that the coronas of the tiling be congruent to $\mathcal{C}_1$, $\mathcal{C}_2$, $\mathcal{C}_3,$ and $\mathcal{C}_4$, the resulting tiling is unique in having those coronas. 

To see that the tiling is equitransitive, we observe that since the tiling is periodic, there is a finite fundamental region $\mathcal{F}$ with respect to the group of translations of the tiling. It is then checked that each square of the same size in $\mathcal{F}$ is in the same transitivity class with respect to the full symmetry group of the tiling. In Figure \ref{fig:2-fd-copy}, copies of a fundamental region are outlined in bold, and as an example, we have shaded two size-2 squares within a copy of the fundamental region. It is quickly checked by inspection that there is a glide reflection mapping one of the shaded size-2 squares to the other while preserving the tiling. In a similar way, it is checked that the other pair of same size squares are equivalent in the fundamental region, so the tiling as a whole is equitransitive.

\begin{figure}[h]
\centering
\includegraphics[width=.75\textwidth]{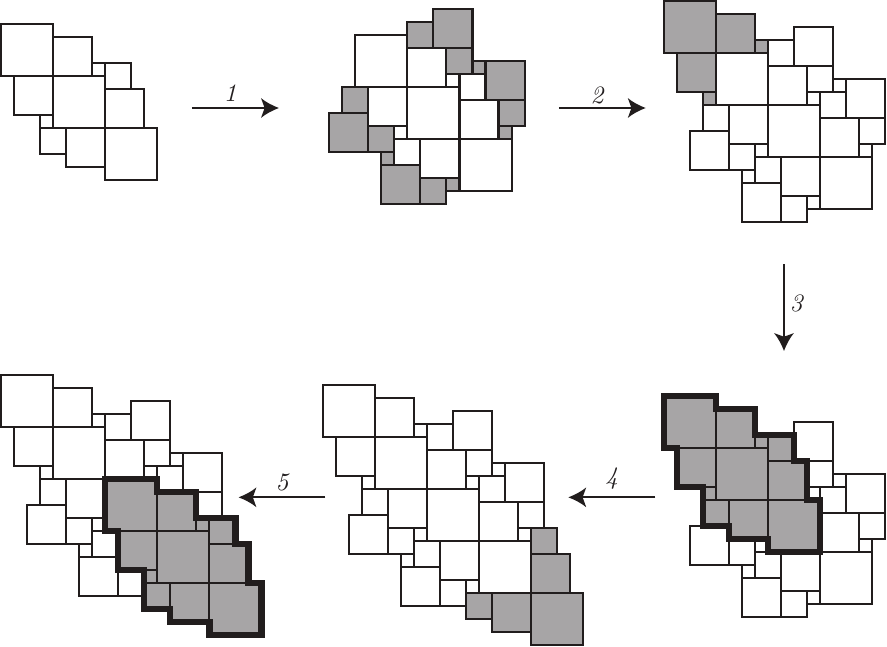}\caption{Construction of the unique tiling with coronas of row 2 of Table \ref{tab:29-tuples}: An infinite strip formed from translated copies of 4-coronas must exist in any tiling with these coronas.}\label{fig:2-construction-step-1}
\end{figure}

\begin{figure}[h]
\centering
\includegraphics[width=.75\textwidth]{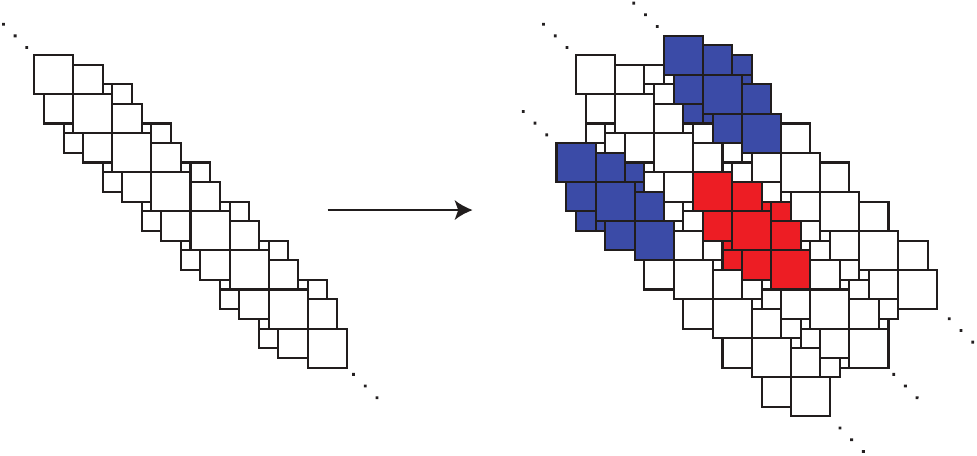}\caption{Construction of the unique tiling with coronas of row 2 of Table \ref{tab:29-tuples}: The infinite strip of translated copies of 4-coronas must expand to include two additional congruent copies of this strip. In this case, the two new strips are rotated through an angle of measure $\pi$ relative to the original central strip (the different rotational orientations of the 4-coronas are indicated by coloring).}\label{fig:2-construction-step-2}
\end{figure}

\begin{figure}[h]
\centering
\includegraphics[width=.4\textwidth]{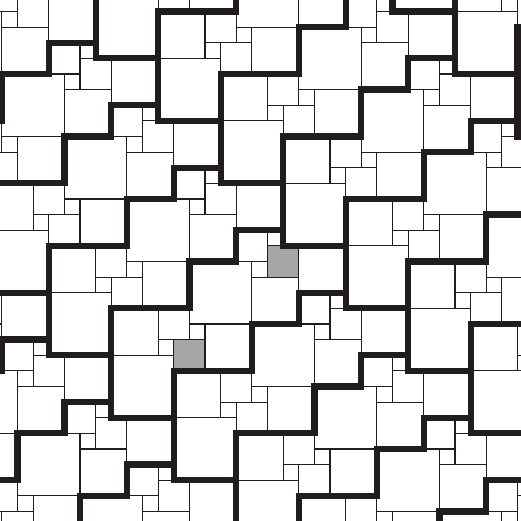}\caption{A portion of the unique UE-4 tiling having the coronas of row 2 of Table \ref{tab:29-tuples} emphasizing a fundamental domain for the group of translations of the tiling. The glide reflection mapping the coronas of shaded size-2 squares is also a symmetry of the tiling.}\label{fig:2-fd-copy}
\end{figure}

\subsection{Rows 24-29 and the Infinite Family \texorpdfstring{$\{\mathcal{T}_y\}$}{Ty} of Faulty UE-4 Tilings}\label{subsec:faulty_tilings}

In rows 24-29 of Table \ref{tab:29-tuples}, it is seen that the 1- and 2-coronas are the same throughout and that the 3- and 4-coronas are both faulty. We will show that these rows are the complete set of instances of 4-tuples with integral exponents in which both the 3- and 4-coronas are faulty. In fact, we will further show that the set of \emph{all} 4-tuples of coronas in which both the 3- and 4-coronas are faulty can be described concisely as exactly those coronas of a family of UE-4 tilings $\{\mathcal{T}_y\}$ that are parametrized by a single real variable $y$ satisfying $0 < y < 7$. In doing so, we will establish the complete set of all 4-tuples of compatible coronas (with both integral and nonintegral exponents), with the computer algorithm providing all the 4-tuples with integral solutions and the argument to follow providing all compatible 4-tuples that have nonintegral exponents. In doing so, we simultaneously find all remaining UE-4 tilings.

Let $\mathcal{C}_3$ be a faulty 3-corona and let $\mathcal{S}$ be a faulty side of $\mathcal{C}_3$. By definition of faulty side, $\mathcal{S}$ does not contain a size-1 or size-2 square, nor may $\mathcal{S}$ be immediately preceded or succeeded by a size-1 or size-2 square. Thus, the squares immediately preceding and succeeding $\mathcal{S}$ must be of size 4, and $\mathcal{S}$ must consist of size 3 or size 4 squares only. It follows that $\mathcal{S}$ be of the form $3^x.4$ ($0 < x \leq 3$), $4^x.3$ ($1 < x < 4$), or $4^x$ ($0 < x \leq 1$). We see that $\mathcal{S}$ along with the size-4 squares immediately preceding and succeeding $\mathcal{S}$  account for three of the four sides of the corona, and the remaining fourth side can only be completed with exactly one size-1 square and one size-2 square. Thus, up to symmetry, all possible faulty 3-coronas are of the following three types (see Figure \ref{fig:faulty_3}): \begin{itemize}
    \item The faulty side is of the form $3^x.4$ and the corona has signature $3^x.4 ; 4^0 ; 2^0.1 ; 4^1$ with $0 < x \leq 3$.
    \item The faulty side is of the form $4^x.3$ and the corona has signature $4^x.3 ; 4^0 ; 2^0.1 ; 4^1$ with $1 < x < 4$.
    \item The faulty side is of the form $4^x$ and the corona has signature  
        \begin{itemize}
            \item  $4^x ; 4^0 ; 2^0.1 ; 4^1$ when $0 < x < 1$ or
            \item  $4^1 ; 3^3.4 ; 2^0.1 ; 4^0$ when $x = 1$.
        \end{itemize}
\end{itemize}

\begin{figure}[h]
\centering
\includegraphics[width=.75\textwidth]{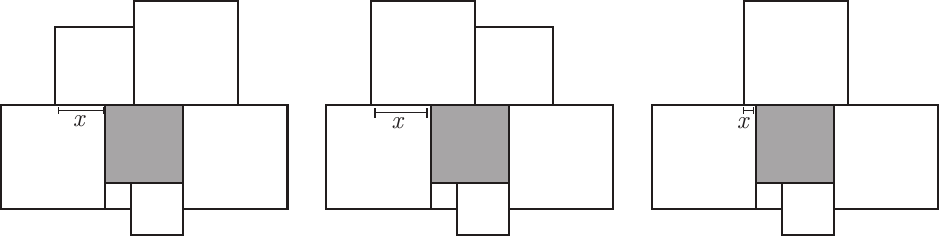}\caption{Faulty 3-coronas}\label{fig:faulty_3}
\end{figure}

Notice that each type of faulty 3-corona has at least one representative with all integer exponents. Now that the faulty 3-coronas are fully described, we point out that the compatibility of a 3-corona with a 1-corona or a 2-corona is not affected by the faulty side of a 3-corona due to the absence of size-1 or size-2 squares along the fault line. Thus, any 1- or 2-corona that is compatible with a faulty 3-corona is compatible with every other faulty 3-corona as well. Moreover, it can be checked that there exist a unique 1-corona and a unique 2-corona that are mutually compatible with any of the three types of faulty 3-coronas; the compatible 1-corona has signature $2^1;3^2;4^3;4^3$ and the 2-corona has signature $3^1;4^3.1;3^1;4^3.1$. It is also seen that all faulty 3-coronas are self-compatible. Lastly, it can be checked that since every faulty 3-corona has a unique compatible 1- and 2-corona, it follows that there is a unique 4-corona that joins them to form a compatible 4-tuple of coronas, with the exception of exactly one faulty 3-corona: For faulty 3-corona with faulty side $4^3.3$, there are exactly two 4-coronas that are compatible, one being faulty and the other being non-faulty. The singular 4-tuple in which the 3-corona is faulty but the 4-corona is non-faulty is seen in row 23 of Table \ref{tab:29-tuples}. For all other 4-tuples of compatible coronas in which the 3-corona is faulty, we see that the 4-corona is also faulty. The 4-tuples of coronas in which the 3-corona is faulty are described in Table \ref{tab:faulty_tuples} (which includes those of rows 23-29 of Table \ref{tab:29-tuples}). 

Next we can consider any 4-tuples of compatible coronas in which the 4-corona is faulty. For a side of a 4-corona to be faulty, the squares preceding and  succeeding this side in the faulty 4-corona must be of size 3. From this observation, it is quickly seen that in order to be compatible with a faulty 4-corona, a 3-corona must also be faulty. Since we have already determined all 4-tuples of coronas in which the 3-corona is faulty, we conclude that Table \ref{tab:faulty_tuples} describes all 4-tuples of compatible coronas in which either the 3- or 4-coronas are faulty. Thus, Tables \ref{tab:29-tuples} and \ref{tab:faulty_tuples} together give a complete accounting of all 4-tuples of compatible coronas for squares of sizes 1, 2, 3, and 4.

\begin{table}[h]
    \centering
    \footnotesize
    \begin{tabular}{c|c|c|c|c|c}
    1-corona & 2-corona & 3-corona & 4-corona(s) & Restrictions & \makecell{Representative \\  in Table \ref{tab:29-tuples}}\\\hline
    $2^1;3^2;4^3;4^3$ & $3^1;4^3.1;3^1;4^3.1$ & $3^x.4 ; 4^0 ; 2^0.1 ; 4^1$ & $4^x .3 ; 3^0.1 ; 4^1.1 ; 2^1.3$ & $0 < x < 3$ & 24 and 25 \\ 
    $2^1;3^2;4^3;4^3$ & $3^1;4^3.1;3^1;4^3.1$ & $3^3.4 ; 4^0 ; 2^0.1 ; 4^1$ & $4^3.3 ; 4^4.3.1 ; 4^1.1 ; 2^1.3$ & $x = 3$   & 26\\\hline
    $2^1;3^2;4^3;4^3$ & $3^1;4^3.1;3^1;4^3.1$ & $4^x.3 ; 4^0 ; 2^0.1 ; 4^1$    & $3^{x-1}.4 ; 3^0.1 ; 4^1.1 ; 2^1.3$    & $1 < x < 4$, $x \neq 3$     & 27\\
    \multirow{3}{*}{$2^1;3^2;4^3;4^3$} & \multirow{3}{*}{$3^1;4^3.1;3^1;4^3.1$} &  \multirow{3}{*}{$4^3.3 ; 4^0 ; 2^0.1 ; 4^1$}   & $3^2.4 ; 1^0.3 ; 4^1.1 ; 2^1.3$     &      & 23\\
    & & & or & $x = 3$  & \\
     & & & $3^2.4 ; 3^0.1 ; 4^1.1 ; 2^1.3$ &  & 28\\\hline
    $2^1;3^2;4^3;4^3$ & $3^1;4^3.1;3^1;4^3.1$ & $4^x ; 4^0 ; 2^0.1 ; 4^1$ & $4^{x+3}.3.4 ; 3^0.1 ; 4^1.1 ; 2^1.3$ & $0 < x < 1$     & \\
    $2^1;3^2;4^3;4^3$ & $3^1;4^3.1;3^1;4^3.1$ & $4^1 ; 3^3.4 ; 2^0.1 ; 4^0$ & $4^{4}.3.4 ; 3^0.1 ; 4^1.1 ; 2^1.3$ & $x = 1$     & 29 \\\hline
     \end{tabular}\caption{4-tuples of coronas containing faulty 3-coronas}\label{tab:faulty_tuples}
\end{table}

Next, we may examine the 4-tuples of compatible coronas in which both the 3- and 4-coronas are faulty to determine which give rise to any UE-4 tilings. What we find is that almost every such 4-tuple gives rise to a unique UE-4 tiling. The only exception to this the 4-tuple \begin{equation}( 2^1;3^2;4^3;4^3\,\, , \,\,3^1;4^3.1;3^1;4^3.1\,\, , \,\,4^{1/2};4^0;2^0.1;4^1\,\, , \,\,4^{7/2}.3.4;3^0.1;4^1.1;2^1.3 ),\label{eqn:special-4-tup}\end{equation} which is a special case of the 4-tuple represented by row 29 of Table \ref{tab:29-tuples} having a 3-corona with faulty side $4^{1/2}$. This 4-tuple gives rise to exactly two distinct tilings, as will be demonstrated later in this section.

All UE-4 tilings having faulty 3- and 4-coronas give rise to an infinite family of tilings that can be parametrized by a single real valued parameter $y$ satisfying $0 < y < 7$. To see how this works, consider a single copy of the 2-corona $\mathcal{C}$ with signature $3^1;4^3.1;3^1;4^3.1$ with points $A$ and $B$ marked at top left in Figure \ref{fig:family-1-3-rows}. Let $C$ be the point on $\mathcal{C}$ located a distance of $y$ horizontally to the left of $B$, then let $\mathbf{v} = \overrightarrow{AC}$. Let $\sigma_{\mathbf{v}}$ be the translation determined by $\mathbf{v}$. Next, consider an infinite row $R$ formed from congruent copies of $\mathcal{C}$, as depicted at top right in Figure \ref{fig:family-1-3-rows}. A UE-4 tiling is then formed by taking the union of the translates $\sigma_{\mathbf{v}}^{i}(R)$ over all integers $i$. If we keep track of the corona signatures of the 3- and 4-coronas as $y$ varies in $0 < y < 7$, the family of tilings $\{\mathcal{T}_y\}$ are in 1-1 correspondence with all possible 4-tuples in which the 3- and 4-coronas are both faulty. Specifically, for $0 < y \leq 3$, the coronas of $\mathcal{T}_y$ exactly those of the first two rows of Table \ref{tab:faulty_tuples} (with $y = x$); for $3 < y \leq 4$, the coronas of $\mathcal{T}_y$ are exactly those of the last two rows of Table \ref{tab:faulty_tuples} (with $x = y-3$); and for $4 < y < 7$, the coronas of $\mathcal{T}_y$ are exactly those of the middle two rows of Table \ref{tab:faulty_tuples}  (with $x = y-3$ and not counting the one instance in which the 4-corona is not faulty there). Note that the integral values of $y$ in $0 < y < 7$ correspond exactly to rows 24-29 of Table \ref{tab:29-tuples}. Also, note that if $y \neq y'$, then $\mathcal{T}_y$ is not equivalent to $\mathcal{T}_{y'}$. 

\begin{figure}[h]
\centering
\includegraphics[width=.75\textwidth]{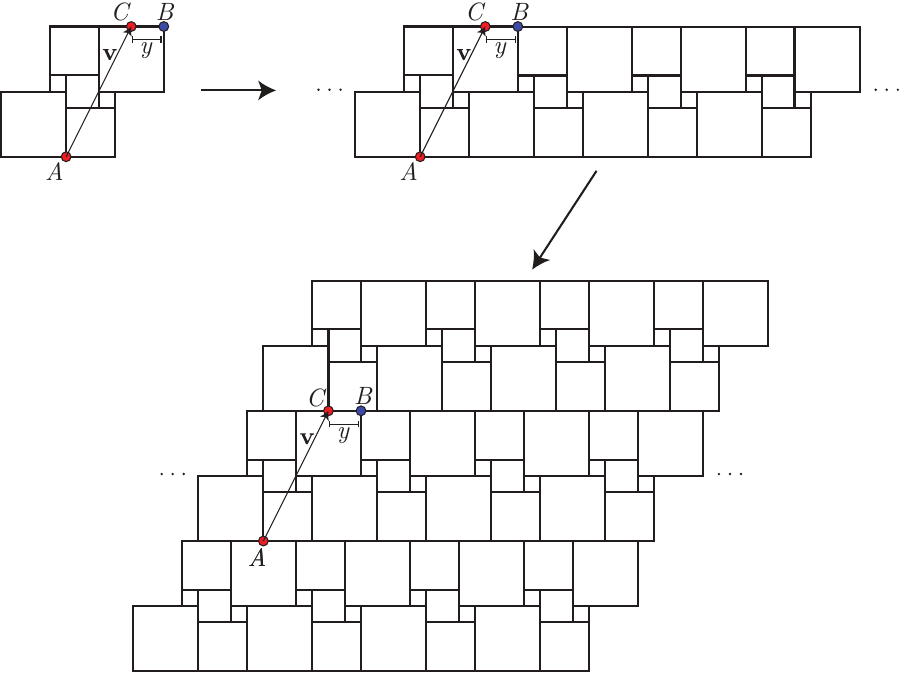}\caption{How tilings $\mathcal{T}_y$ are formed.}\label{fig:family-1-3-rows}
\end{figure}

Within the family of tilings $\{\mathcal{T}_y\}$, the corresponding 4-tuple of coronas for each value of $y$ gives rise to exactly one tiling - namely, $\mathcal{T}_y$, except for one special case: When $y = 7/2$, the corresponding 4-tuple of coronas are exactly those seen in Equation \ref{eqn:special-4-tup}, and for this 4-tuple of coronas, we find that besides the tiling $\mathcal{T}_{7/2}$, these coronas can give rise to one additional tiling. The argument that the 4-tuple of coronas that give rise to $\mathcal{T}_y$ when $y \neq 7/2$ do so uniquely and that $\mathcal{T}_y$ is equitransitive and unilateral is similar to the argument we previously applied to 4-tuple of row 2 of Table \ref{tab:29-tuples}.

To see why the 4-tuple $(\mathcal{C}_1,\mathcal{C}_2,\mathcal{C}_3,\mathcal{C}_4)$ of coronas corresponding to $\mathcal{T}_{7/2}$ (Equation \ref{eqn:special-4-tup}) gives rise to two distinct tilings (one of which is $\mathcal{T}_{7/2}$), start by placing a copy of $\mathcal{C}_4$ in the plane. This copy of $\mathcal{C}_4$ contains three size-3 squares, and we note that the size-3 squares on the left and right side of the 4-corona can be completed in a unique way (while maintaining congruence with $\mathcal{C}_3$), but the size-3 square on the top side of the 4-corona can be completed in two ways - as a directly congruent copy of $\mathcal{C}_3$ and as in indirectly (reflected) copy of $\mathcal{C}_3$. Each of these two choices of orientation for the 3-corona forces any UE-4 tiling with these coronas to have a corresponding infinite row of tiles containing translated copies of $\mathcal{C}_3$ in the same orientation as that original choice, as seen in Figures \ref{fig:17_1_A} and \ref{fig:17_1_B}, where we have colored the size-1 and -2 squares of the 3-corona to emphasize the orientation to the 3-corona. Along the top and bottom sides of these infinite rows there are size-3 squares whose coronas may be completed in two ways. Once a choice is made for orientation of one of the top-side size-3 squares and for one of the bottom-side size-3 squares, the presence of a corresponding infinite row of squares containing translated copies of $\mathcal{C}_3$ in the same orientation is forced, resulting in a taller infinite row with size-3 squares along the top and bottom sides whose coronas can be completed in two ways. In Figure \ref{fig:17_2_AB} we exhibit how one such choice results in a taller row. This process can be continued ad infinitum, and so it may seem that there is an uncountable infinity of distinct UE-4 tilings corresponding to orientation choice for each row. However, most of these tilings are not equitransitive. To see how the equitransitivity constraint restricts the number of distinct tilings, notice that each row produced as discussed above contains arrangements of two size-1 squares and one size-2 square that come in two orientations (shaded red and blue in Figures \ref{fig:17_1_A} - \ref{fig:17_2_AB}). The choice of orientation used for the 3-corona that determines how each row of the tiling is completed also determines the orientation of that arrangement of size 1 and 2 squares. If we label these rows by the orientations of the patches of size-1 and size-2 squares contained within by $A$ and $B$, we see that each unilateral tiling formed from these coronas corresponds to a bi-infinite sequence in the symbols $A$ and $B$. Consideration of this purely symbolic bi-infinite sequence informs the question of whether or not the corresponding 
tiling is also equitransitive. In particular, in order for the corresponding tiling to be equitransitive, it is necessary that the sequence of A's and B's representing the orientations of the rows is transitive with respect to color preserving and color reversing isometries of the sequences (where ``color preserving isometry" is just shift translation of the sequence and ``color reversing isometry" is reflecting the sequence and reversing the symbols $A$ and $B$). It turns out that there are exactly 4 such transitive sequences: \begin{enumerate}
    \item $\ldots, A, A, A, \ldots$,
    \item $\ldots , B, B, B, \ldots$,
    \item $\ldots, A, B, A, B, \ldots$, and 
    \item $\ldots A, A, B, B, A, A, B, B, \ldots$.
\end{enumerate} The two constant sequences are equivalent to each other via reflection, so that leaves exactly three potential UE-4 tilings. By inspection, we determine that tilings corresponding to the sequences $\ldots, A, A, A, \ldots$ and  constant sequence and $\ldots, A, B, A, B, \ldots$ are equitransitive while the tiling corresponding to the sequence $\ldots A, A, B, B, A, A, B, B, \ldots$ is not equitransitive. The two equitransitive tilings are depicted in Figures \ref{fig:24-half} and \ref{fig:25}.

\begin{figure}[h]
\begin{center}
\includegraphics[width=.75\textwidth]{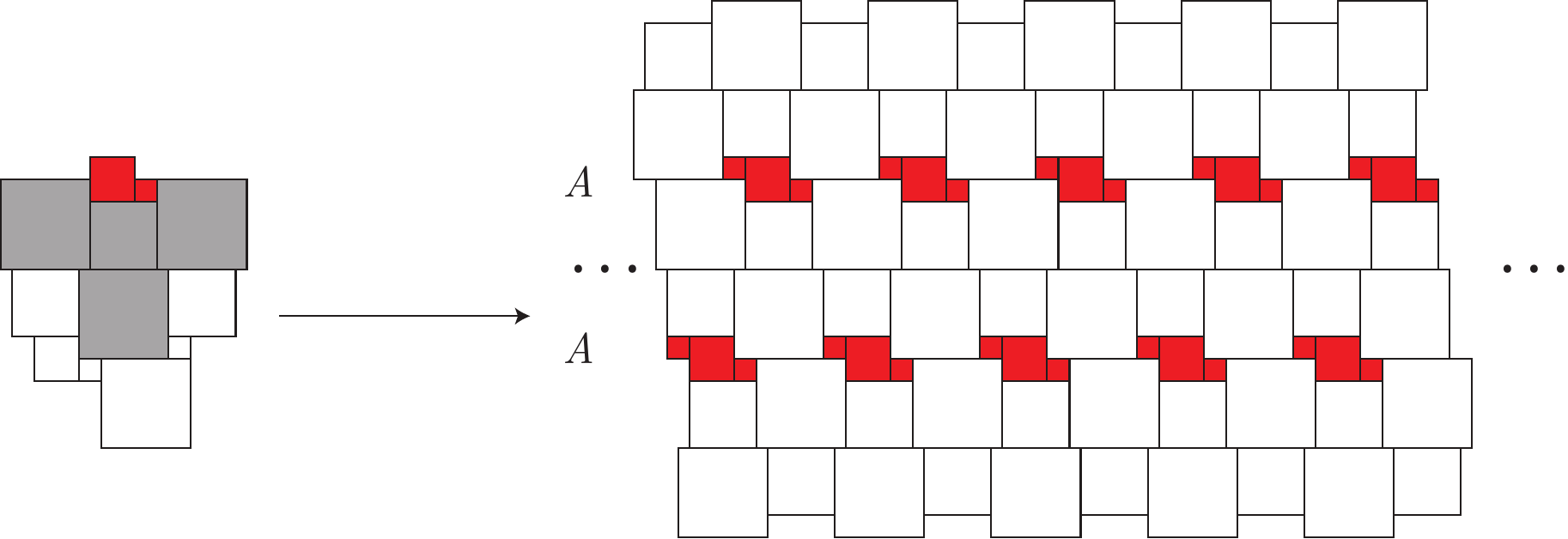}\caption{A choice of orientation the completion of the corona of the size-3 square at the top of a 4-corona (at left) forces an infinite row of tiles in which the arrangement of the size-1 and -2 squares have the same orientation.}\label{fig:17_1_A}
\end{center}
\end{figure}

\begin{figure}[h]
\centering
\includegraphics[width=.75\textwidth]{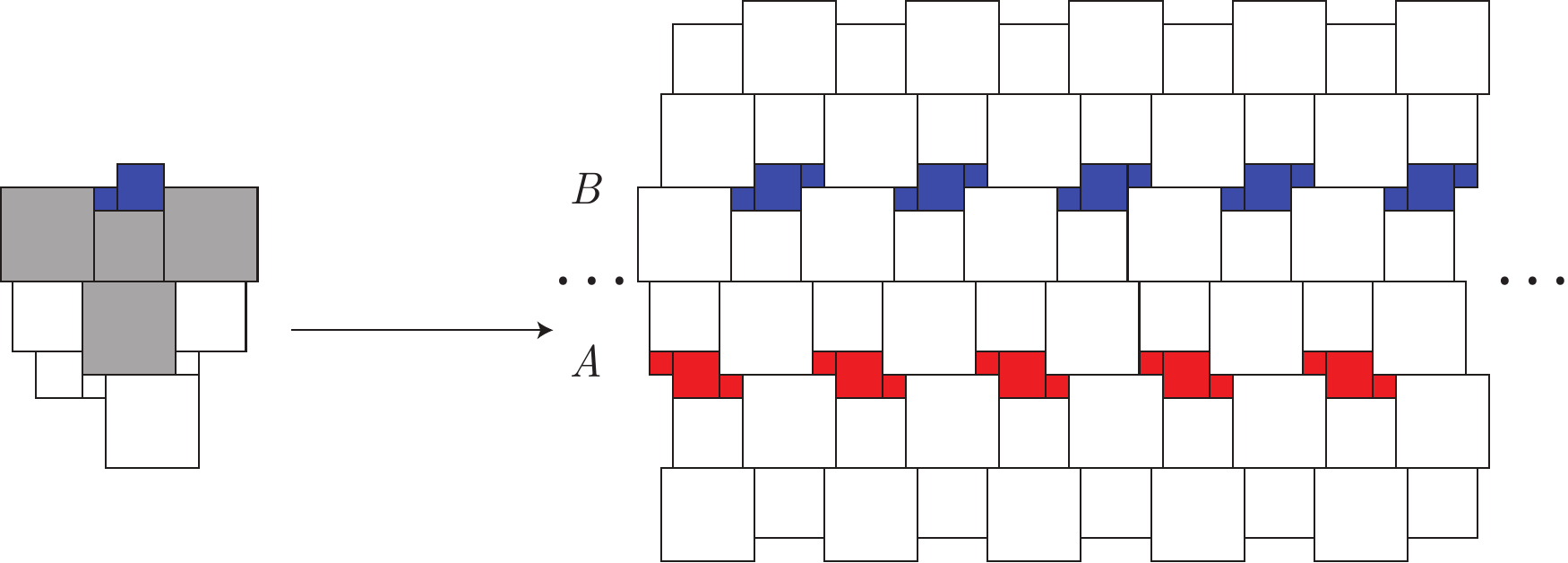}\caption{A choice of orientation the completion of the corona of the size-3 square at the top of a 4-corona (at left) forces an infinite row of tiles in which the arrangement of the size-1 and -2 squares have opposite orientations.}\label{fig:17_1_B}
\end{figure}

\begin{figure}[h]
\centering
\includegraphics[width=.75\textwidth]{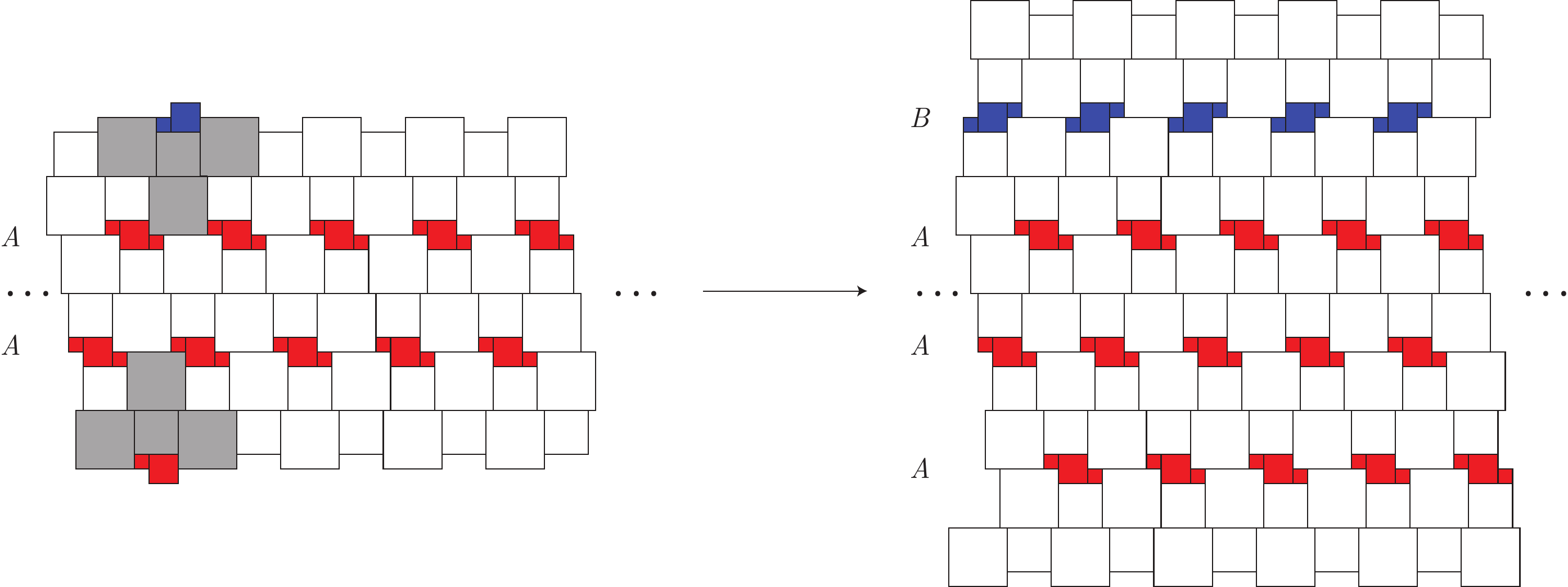}\caption{A choice of orientation for the completion of any one 3-corona on the top side or bottom side of a row forces a vertical extension of the row, adding a new strip with the chosen orientation.}\label{fig:17_2_AB}
\end{figure}

\subsection{Main Theorem}\label{subsec:main}

We have established that all possible 4-tuples of compatible coronas for UE-4 tilings by squares of sizes 1, 2, 3, and 4 are given in Tables \ref{tab:29-tuples} and \ref{tab:faulty_tuples}, and we have established that every such 4-tuple except one gives rise to a unique UE-4 tiling, and that the exceptional 4-tuple of Equation \ref{eqn:special-4-tup} gives rise to exactly three distinct UE-4 tilings.  We summarize these results as the main theorem classifying all UE-4 tilings by squares of sizes 1, 2, 3, and 4:

\begin{theorem}\label{thm:main}
    Let $\mathcal{T}$ be UE-4 tiling by squares of sizes 1, 2, 3, and 4. Then exactly one of the following holds:
    \begin{enumerate}
        \item $\mathcal{T}$ is the unique tiling whose 1-, 2-, 3-, and 4-coronas have signatures corresponding to one of rows 1-23 of Table \ref{tab:29-tuples} (these are the rows in which the 4-corona is not faulty). See Figures \ref{fig:1} - \ref{fig:23}.
        \item $\mathcal{T} = \mathcal{T}_y$ for exactly one $y$ satisfying $0 < y < 7$. A representative tiling in the family $\{\mathcal{T}_y\}$ is seen in Figure \ref{fig:24}. In the family $\{\mathcal{T}_y\}$, both the 3- and 4-coronas are faulty.
        \item $\mathcal{T}$ is the tiling depicted in Figure \ref{fig:25}. The corona signature for this tiling is the same as that of $\mathcal{T}_{7/2}$ (Equation \ref{eqn:special-4-tup}). 
    \end{enumerate}
    Thus, there are 25 types of UET-4 tilings, one of which is an infinite family in which any two tilings are row-shifted versions of each other.
\end{theorem}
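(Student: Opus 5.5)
The proof assembles the results established in Section 4 into a single case analysis. The argument runs on the corona 4-tuple of $\mathcal{T}$.

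\textbf{Step 1: reduce to a corona 4-tuple.} Let $\mathcal{T}$ be a UE-4 tiling. By equitransitivity, all $r$-coronas of $\mathcal{T}$ are congruent, so $\mathcal{T}$ determines a well-defined 4-tuple $(\mathcal{C}_1,\mathcal{C}_2,\mathcal{C}_3,\mathcal{C}_4)$. This tuple satisfies the following:
\begin{itemize}
\item It is pairwise compatible, including self-compatibility.
\item Each $\mathcal{C}_i$ is viable, by unilaterality, Theorem \ref{thm:vortices}, Lemma \ref{lem:no-1-or-2} and Theorem \ref{thm:1-10}.
\item It satisfies Theorem \ref{thm:no-closed-subsets}.
\end{itemize}
Split on whether the tuple has nonintegral exponents.

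\begin{itemize}
\item \emph{All exponents integral, or no faulty side.} Non-faulty type 4 sides are forced to have integral prehang, since an adjacent size-1 or size-2 square must be a vortex. So the tuple is among the 29 rows of Table \ref{tab:29-tuples} produced by the exhaustive search of Section 3.
\item \emph{Some faulty side.} If the 4-corona is faulty, its flanking squares are size-3, which forces $\mathcal{C}_3$ to be faulty. The classification of faulty 3-coronas into the three families, together with the uniqueness of the compatible 1- and 2-coronas, then places the tuple in Table \ref{tab:faulty_tuples}.
\end{itemize}
Hence the tuple lies in Table \ref{tab:29-tuples} $\cup$ Table \ref{tab:faulty_tuples}.

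\textbf{Step 2: non-faulty 4-corona.} If $\mathcal{C}_4$ is non-faulty, the tuple is one of rows 1--23, row 23 being the non-faulty option for the 3-corona with faulty side $4^3.3$. For each row, run the strip argument given for row 2:
\begin{enumerate}
\item The 4-corona forces a unique infinite strip of translated copies.
\item Completing the boundary coronas forces unique adjacent parallel strips.
\item Induction then yields a unique periodic tiling.
\item Check equitransitivity on a translational fundamental region.
\end{enumerate}
This gives case 1. The 23 tilings are pairwise distinct, since their corona signatures differ.

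\textbf{Step 3: both faulty.} Otherwise both $\mathcal{C}_3$ and $\mathcal{C}_4$ are faulty, and the tuple is one of the faulty rows of Table \ref{tab:faulty_tuples}. Map the parameter $x$ to $y$ as follows:
\begin{itemize}
\item $y=x$ on $(0,3]$;
\item $y=x+3$ for the $4^x$ family, which covers $(3,4]$;
\item $y=x+3$ for the $4^x.3$ family, which covers $(4,7)$.
\end{itemize}
This gives a bijection between these tuples and $y\in(0,7)$, with $\mathcal{T}_y$ realizing the tuple. Inequivalence of $\mathcal{T}_y$ for distinct $y$ follows because the signatures, in particular the exponents, differ.

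For $y\neq 7/2$, the same forced-strip argument shows $\mathcal{T}=\mathcal{T}_y$. The fault line rows are forced, and the single prehang value fixes the shift between consecutive rows. This gives case 2.

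For $y=7/2$, the top 3-square in $\mathcal{C}_4$ admits two completions, one direct and one reflected. So any tiling with these coronas is encoded by a bi-infinite $\{A,B\}$ row sequence. Equitransitivity requires the sequence to be transitive under shifts and under reflection combined with symbol swap. Classifying such sequences gives the constant sequences, $ABAB\cdots$, and $AABB\cdots$.
\begin{itemize}
\item The constant sequences yield $\mathcal{T}_{7/2}$, already in case 2.
\item $AABB\cdots$ fails equitransitivity on inspection.
\item $ABAB\cdots$ gives the extra tiling of case 3.
\end{itemize}

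\textbf{Mutual exclusivity.} The three cases are disjoint:
\begin{itemize}
\item Case 1 has a non-faulty 4-corona, while cases 2 and 3 do not.
\item The case 3 tiling is not $\mathcal{T}_{7/2}$, since its row sequence is nonconstant. It therefore lacks the translation $\sigma_{\mathbf{v}}$ relating consecutive rows, and so is not equal to any $\mathcal{T}_y$.
\end{itemize}
Counting gives 23 tilings in case 1, one family in case 2, and one tiling in case 3, for 25 types.

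\textbf{Main obstacle.} I expect the hardest part to be the completeness claim in Step 1 for nonintegral exponents. One must justify that every faulty side has an integral representative, so that the computer search together with the hand analysis of faulty 3-coronas misses nothing. One must also establish the row uniqueness in the $y=7/2$ case, and the transitive-sequence classification there. For the classification, I would argue that a transitive sequence must be periodic with all positions equivalent, and then bound the period by examining the local neighborhoods $AA$, $AB$ and $BB$.
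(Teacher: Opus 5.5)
Your proposal follows the paper's argument essentially step for step: the computer enumeration of integral tuples combined with the hand classification of faulty coronas (Tables \ref{tab:29-tuples} and \ref{tab:faulty_tuples}), the forced-strip uniqueness argument for rows 1--23 and for $y\neq 7/2$, the same reparametrization $x\mapsto y$, and the $A/B$ row-sequence analysis at $y=7/2$. One small correction, which you inherited from the paper's wording: under shifts and reflection-with-swap alone, $\ldots AABB\ldots$ is not transitive (taking the $A$'s at positions $\equiv 0,1$ (mod $4$), its symmetries split positions into the two orbits $\{0,3\}$ and $\{1,2\}$ (mod $4$)); the list of four sequences really comes from also allowing half-turns, which act as reversal without swap, and reflections in lines perpendicular to the fault lines, which act as shift with swap. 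That larger group gives the correct necessary condition anyway, and since $\ldots AABB\ldots$ is discarded by inspection, your conclusion is unaffected.
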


\section{Toward Classifying UE-5 Tilings by Squares of Sizes 1, 2, 3, 4, and 5}

The computer algorithm provided in Appendix \ref{apx:code} can be used to generate all viable 1-, 2-, ..., and 5-coronas. Doing so, the following numbers of viable coronas are found: \begin{itemize}
        \item 1-coronas: 39
        \item 2-coronas: 70
        \item 3-coronas: 383
        \item 4-coronas: 1282
        \item 5-coronas: 4393
        \item Total 5-tuples of coronas: 5,888,580,947,340
    \end{itemize}
    
For sizes 1, 2, 3, and 4, the total number of 4-tuples of coronas was a mere 7,581,600. Thus, the number of tuples to check for compatibility increases by a factor of about 77,669 when moving from four sizes of squares to five. Checking compatibility for four sizes took about 5 minutes to process on a standard desktop computer. Thus, on the same computer it would take at least 269 days to process on a single computer, but in reality it will take longer than that, because the maximum number of compatibility pairs to be checked for each tuple grows from 16 to 25. That said, a computing cluster with 1000 CPUs could determine the set of compatible 5-tuples of compatible coronas in about a day, so extending the problem to five sizes of squares is attainable. The subsequent problem of determining which of these 5-tuples give rise to UE-5 tilings, and which are actually just representatives of infinite families like the family $\{\mathcal{T}_y\}$ discussed in Section \ref{subsec:faulty_tilings} may be more challenging as there is no known way to automate this part of the process.

\section*{Acknowledgments}
The authors would like to thank Doris Schattschneider for her helpful comments on an early draft of this article. This work was partially supported by NSF grant \#2150511.

\clearpage

\begin{appendices}

\section{Figures of the 25 UE-4 Tilings by Squares of Sizes \texorpdfstring{$1$}{1}, \texorpdfstring{$2$}{2}, \texorpdfstring{$3$}{3}, and \texorpdfstring{$4$}{4}}\label{apx:all-tilings}

\renewcommand{\thefigure}{T\arabic{figure}}
\setcounter{figure}{0}

\begin{figure}[H]
\centering
\begin{subfigure}{0.45\textwidth}
\centering
\includegraphics[width=.75\textwidth]{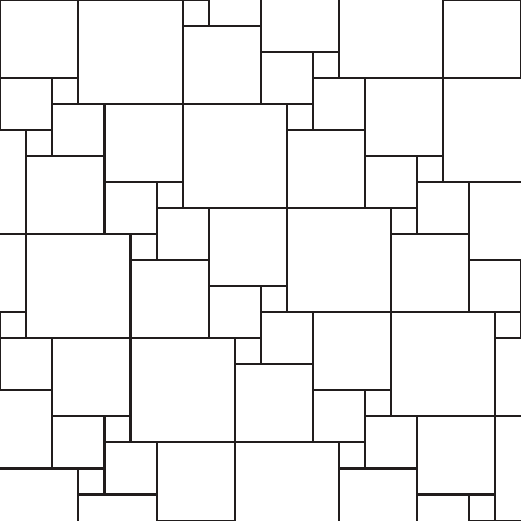}\caption{}\label{fig:1-plain}
\end{subfigure}
\begin{subfigure}{0.45\textwidth}
\centering
\includegraphics[width=.75\textwidth]{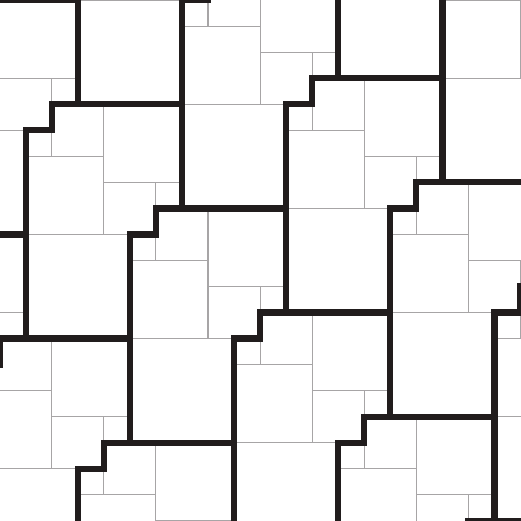}\caption{}\label{fig:1-fd}
\end{subfigure}
\caption{$( 2^1;2^1;3^2;4^3\,\, , \,\,3^1;3^1;4^3.1;2^1.1\,\, , \,\,3^3.4;1^0.2;3^2.2;4^1\,\, , \,\,4^4.3.2;1^0.3;4^4.3.2;1^0.3 )$}\label{fig:1}
\end{figure}

\begin{figure}[H]
\centering
\begin{subfigure}{0.45\textwidth}
\centering
\includegraphics[width=.75\textwidth]{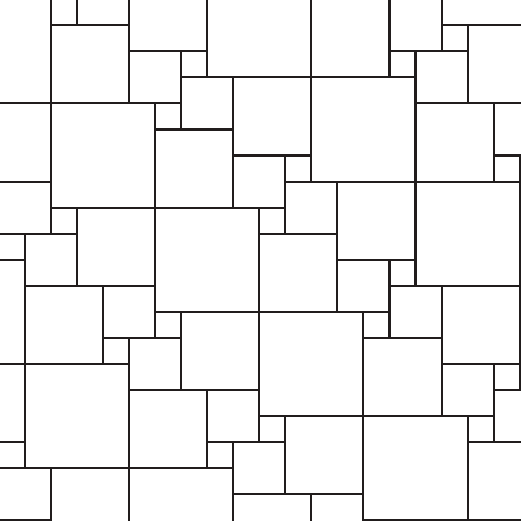}\caption{}\label{fig:2-plain}
\end{subfigure}
\begin{subfigure}{0.45\textwidth}
\centering
\includegraphics[width=.75\textwidth]{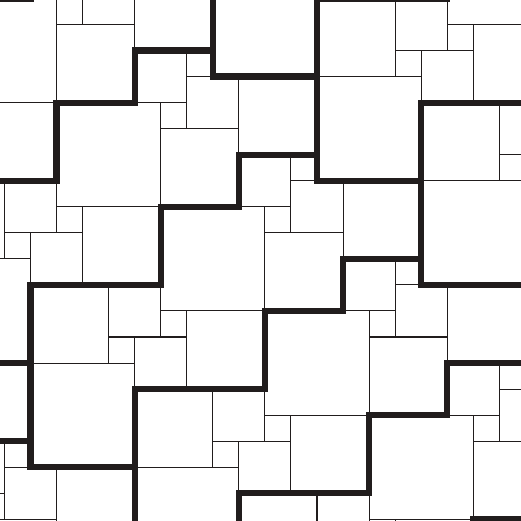}\caption{}\label{fig:2-fd}
\end{subfigure}
\caption{$( 2^1;2^1;3^2;4^3\,\, , \,\,3^1;3^1;4^3.1;2^1.1\,\, , \,\,3^3.4;1^0.2;3^2.2;4^1\,\, , \,\,4^4.3.1;2^1.3;4^4.3.2;1^0.3 )$}\label{fig:2}
\end{figure}

\begin{figure}[H]
\centering
\begin{subfigure}{0.45\textwidth}
\centering
\includegraphics[width=.75\textwidth]{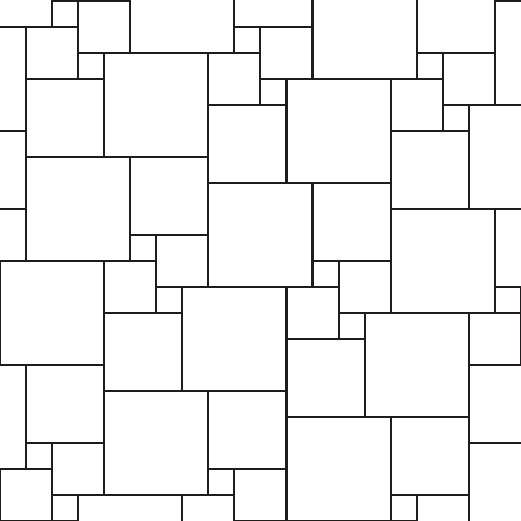}\caption{}\label{fig:3-plain}
\end{subfigure}
\begin{subfigure}{0.45\textwidth}
\centering
\includegraphics[width=.75\textwidth]{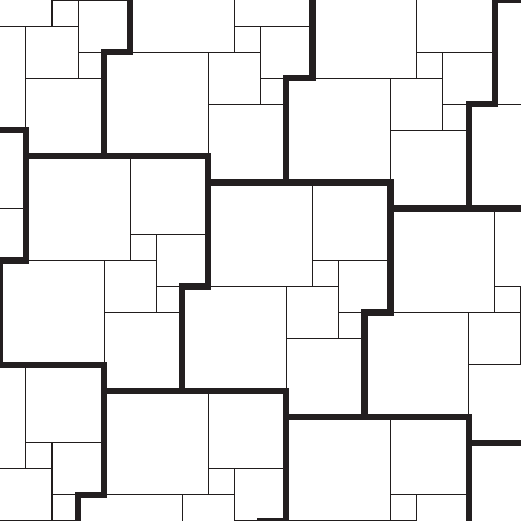}\caption{}\label{fig:3-fd}
\end{subfigure}
\caption{$( 2^1;2^1;3^2;4^3\,\, , \,\,3^1;4^2;4^3.1;2^1.1\,\, , \,\,4^0;1^0.2;4^2.3;4^0\,\, , \,\,3^0.4;3^0.1;2^1.4;2^0.3 )$}\label{fig:3}
\end{figure}

\begin{figure}[H]
\centering
\begin{subfigure}{0.45\textwidth}
\centering
\includegraphics[width=.75\textwidth]{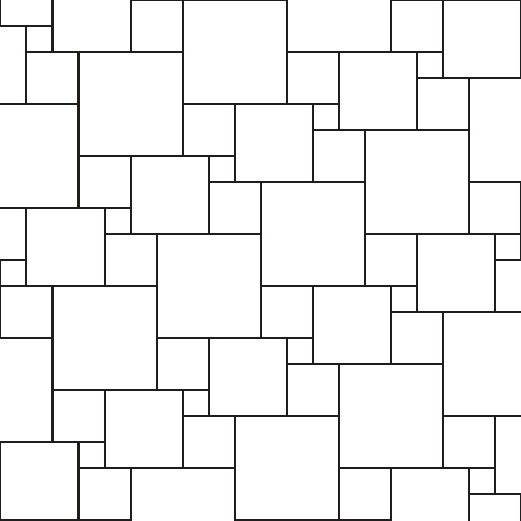}\caption{}\label{fig:4-plain}
\end{subfigure}
\begin{subfigure}{0.45\textwidth}
\centering
\includegraphics[width=.75\textwidth]{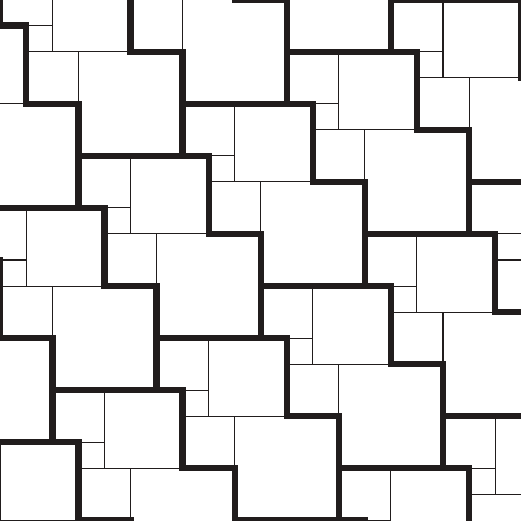}\caption{}\label{fig:4-fd}
\end{subfigure}
\caption{$( 2^1;3^2;2^1;3^2\,\, , \,\,3^1;4^2;4^2;3^2.1\,\, , \,\,4^2.2;1^0.2;4^2.2;1^0.2\,\, , \,\,3^1.2;4^2.2;3^1.2;4^2.2 )$}\label{fig:4}
\end{figure}

\begin{figure}[H]
\centering
\begin{subfigure}{0.45\textwidth}
\centering
\includegraphics[width=.75\textwidth]{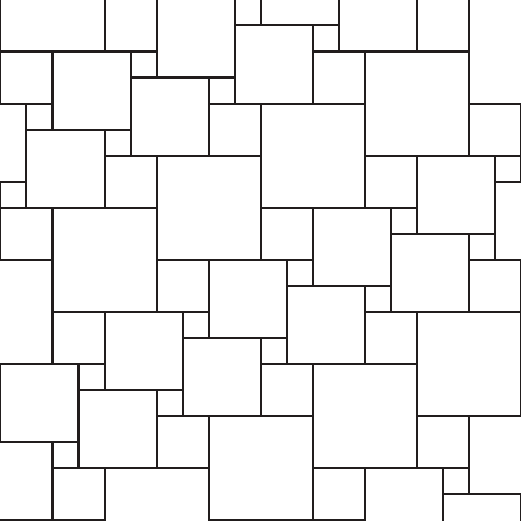}\caption{}\label{fig:5-plain}
\end{subfigure}
\begin{subfigure}{0.45\textwidth}
\centering
\includegraphics[width=.75\textwidth]{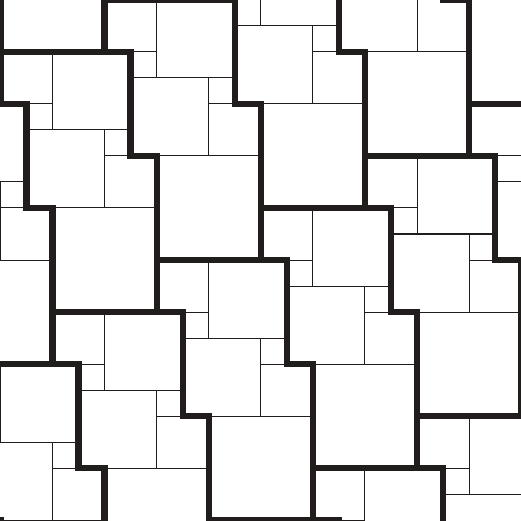}\caption{}\label{fig:5-fd}
\end{subfigure}
\caption{$( 2^1;3^2;3^2;3^2\,\, , \,\,3^1;4^2;4^2;3^2.1\,\, , \,\,1^0.3;1^0.2;4^2.2;1^0.3\,\, , \,\,3^1.2;4^2.2;3^1.2;4^2.2 )$}\label{fig:5}
\end{figure}

\begin{figure}[H]
\centering
\begin{subfigure}{0.45\textwidth}
\centering
\includegraphics[width=.75\textwidth]{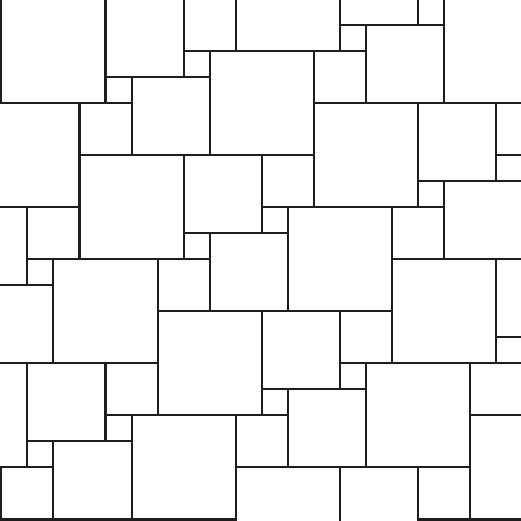}\caption{}\label{fig:6-plain}
\end{subfigure}
\begin{subfigure}{0.45\textwidth}
\centering
\includegraphics[width=.75\textwidth]{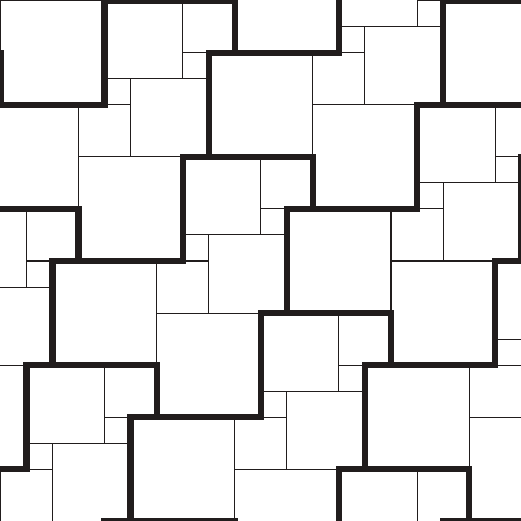}\caption{}\label{fig:6-fd}
\end{subfigure}
\caption{$( 2^1;3^2;3^2;4^3\,\, , \,\,3^1;4^2;4^2;4^3.1\,\, , \,\,1^0.3;1^0.2;4^2.3;4^0\,\, , \,\,2^0.3;3^0.1;2^1.4;2^0.4 )$}\label{fig:6}
\end{figure}

\begin{figure}[H]
\centering
\begin{subfigure}{0.45\textwidth}
\centering
\includegraphics[width=.75\textwidth]{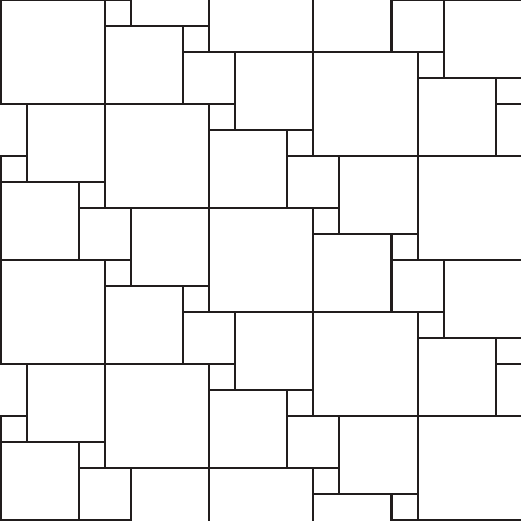}\caption{}\label{fig:7-plain}
\end{subfigure}
\begin{subfigure}{0.45\textwidth}
\centering
\includegraphics[width=.75\textwidth]{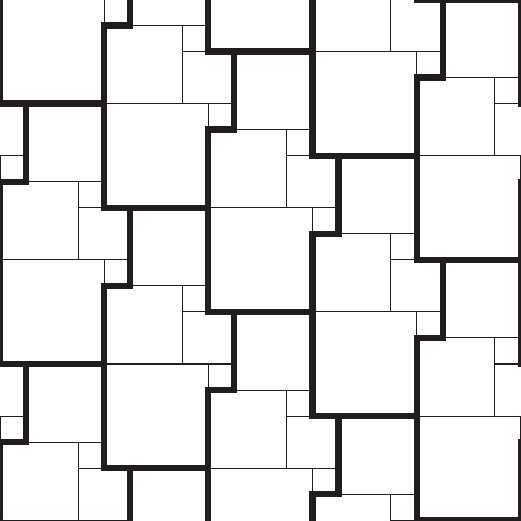}\caption{}\label{fig:7-fd}
\end{subfigure}
\caption{$( 2^1;3^2;3^2;4^3\,\, , \,\,3^1;4^3.1;3^1;4^3.1\,\, , \,\,3^3.4;2^0.1;3^1.1;4^1\,\, , \,\,4^4.3.2;1^0.3;4^4.3.2;1^0.3 )$}\label{fig:7}
\end{figure}

\begin{figure}[H]
\centering
\begin{subfigure}{0.45\textwidth}
\centering
\includegraphics[width=.75\textwidth]{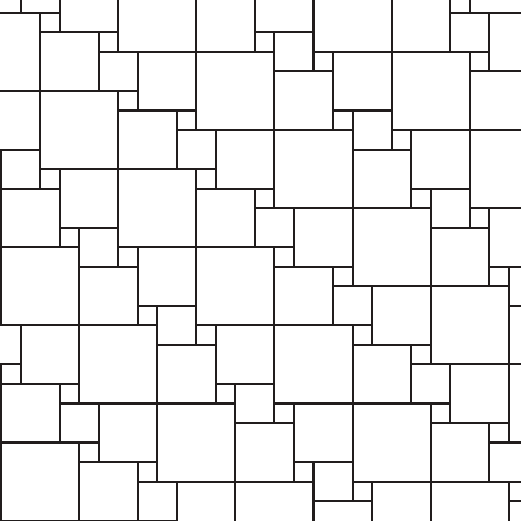}\caption{}\label{fig:8-plain}
\end{subfigure}
\begin{subfigure}{0.45\textwidth}
\centering
\includegraphics[width=.75\textwidth]{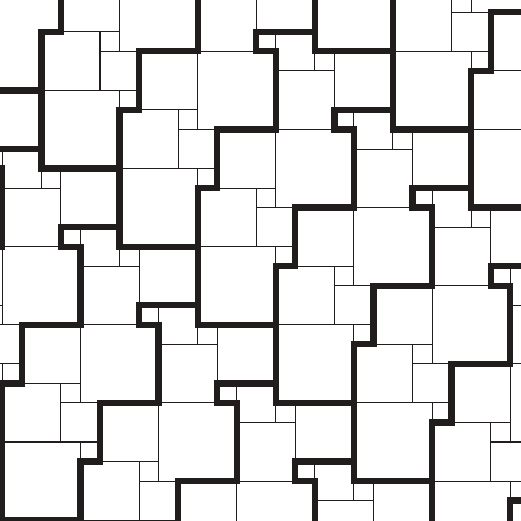}\caption{}\label{fig:8-fd}
\end{subfigure}
\caption{$( 2^1;3^2;3^2;4^3\,\, , \,\,3^1;4^3.1;3^1;4^3.1\,\, , \,\,3^3.4;2^0.1;3^1.1;4^1\,\, , \,\,4^4.3.1;2^1.3;4^4.3.2;1^0.3 )$}\label{fig:8}
\end{figure}

\begin{figure}[H]
\centering
\begin{subfigure}{0.45\textwidth}
\centering
\includegraphics[width=.75\textwidth]{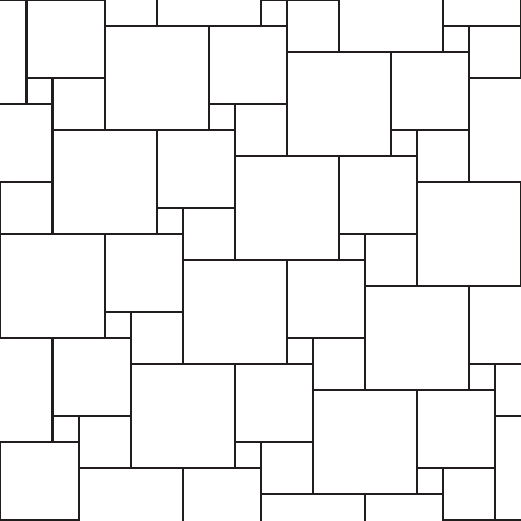}\caption{}\label{fig:9-plain}
\end{subfigure}
\begin{subfigure}{0.45\textwidth}
\centering
\includegraphics[width=.75\textwidth]{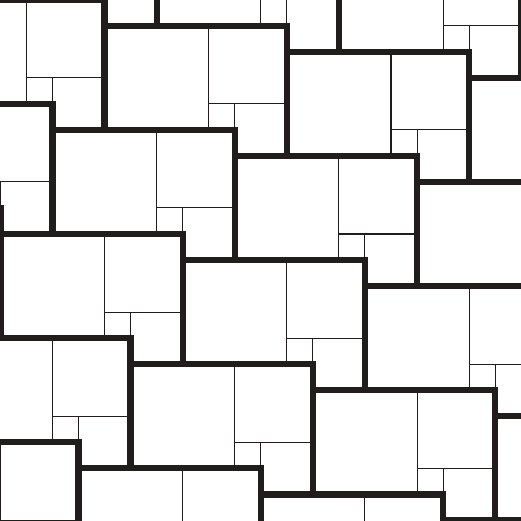}\caption{}\label{fig:9-fd}
\end{subfigure}
\caption{$( 2^1;3^2;4^3;3^2\,\, , \,\,3^1;4^2;4^2;3^2.1\,\, , \,\,4^0;1^0.2;4^2.2;1^0.4\,\, , \,\,2^0.4;3^0.1;3^1.4;2^0.3 )$}\label{fig:9}
\end{figure}

\begin{figure}[H]
\centering
\begin{subfigure}{0.45\textwidth}
\centering
\includegraphics[width=.75\textwidth]{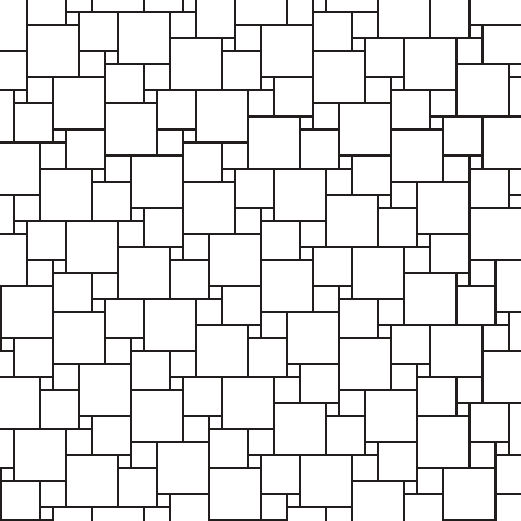}\caption{}\label{fig:10-plain}
\end{subfigure}
\begin{subfigure}{0.45\textwidth}
\centering
\includegraphics[width=.75\textwidth]{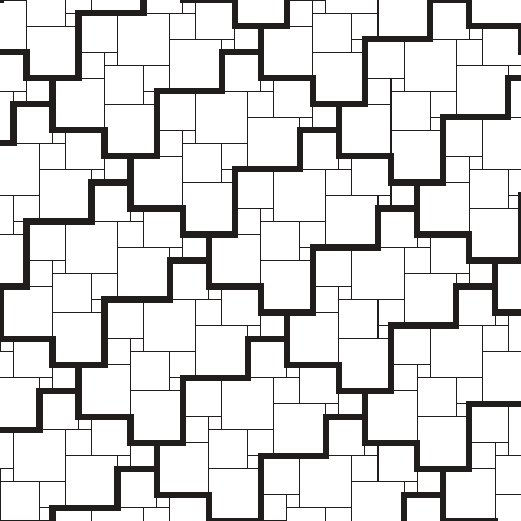}\caption{}\label{fig:10-fd}
\end{subfigure}
\caption{$( 2^1;3^2;4^3;3^2\,\, , \,\,3^1;4^2;4^2;3^2.1\,\, , \,\,4^0;1^0.2;4^2.1;2^1.4\,\, , \,\,2^0.3;1^0.3;4^2.3;2^0.4 )$}\label{fig:10}
\end{figure}

\begin{figure}[H]
\centering
\begin{subfigure}{0.45\textwidth}
\centering
\includegraphics[width=.75\textwidth]{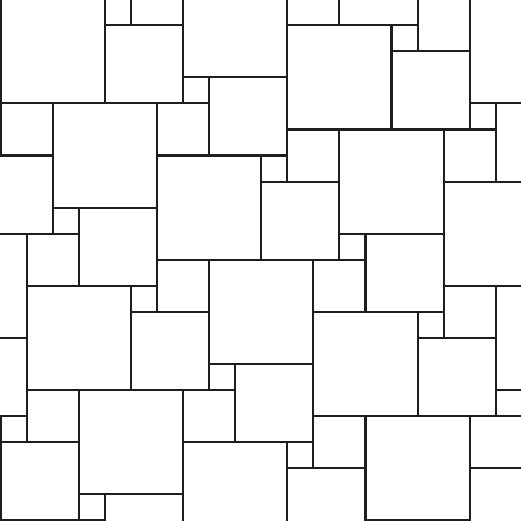}\caption{}\label{fig:11-plain}
\end{subfigure}
\begin{subfigure}{0.45\textwidth}
\centering
\includegraphics[width=.75\textwidth]{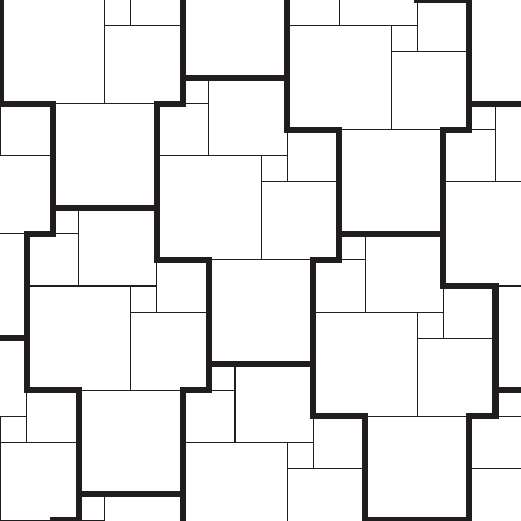}\caption{}\label{fig:11-fd}
\end{subfigure}
\caption{$( 2^1;3^2;4^3;3^2\,\, , \,\,3^1;4^2;4^2;3^2.1\,\, , \,\,4^0;1^0.2;4^2.1;2^1.4\,\, , \,\,2^0.3;1^0.3;4^2.2;3^1.4 )$}\label{fig:11}
\end{figure}

\begin{figure}[H]
\centering
\begin{subfigure}{0.45\textwidth}
\centering
\includegraphics[width=.75\textwidth]{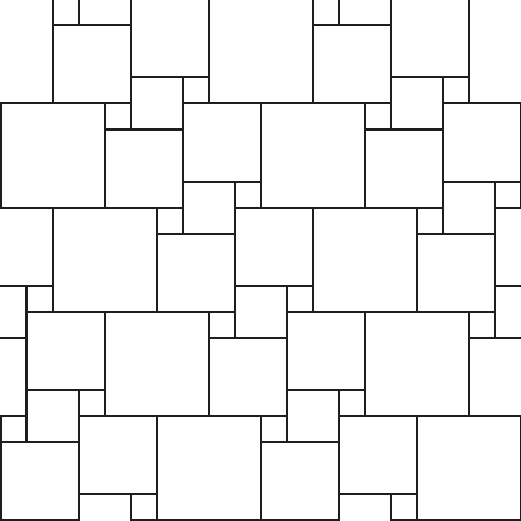}\caption{}\label{fig:12-plain}
\end{subfigure}
\begin{subfigure}{0.45\textwidth}
\centering
\includegraphics[width=.75\textwidth]{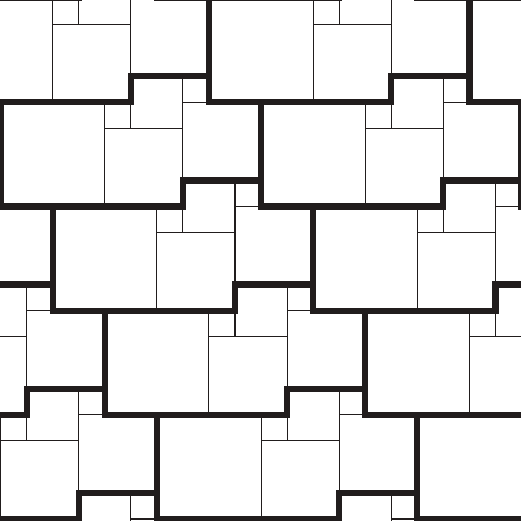}\caption{}\label{fig:12-fd}
\end{subfigure}
\caption{$( 2^1;3^2;4^3;3^2\,\, , \,\,3^1;3^2.1;3^1;3^2.1\,\, , \,\,4^0;1^0.2;3^1.2;1^0.4\,\, , \,\,4^2.3;1^0.3;4^2.3;1^0.3 )$}\label{fig:12}
\end{figure}

\begin{figure}[H]
\centering
\begin{subfigure}{0.45\textwidth}
\centering
\includegraphics[width=.75\textwidth]{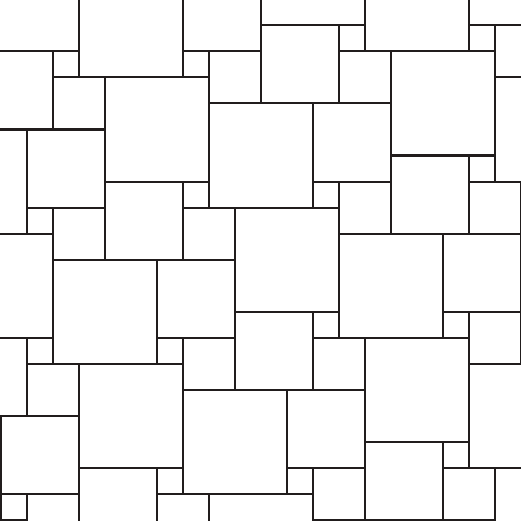}\caption{}\label{fig:13-plain}
\end{subfigure}
\begin{subfigure}{0.45\textwidth}
\centering
\includegraphics[width=.75\textwidth]{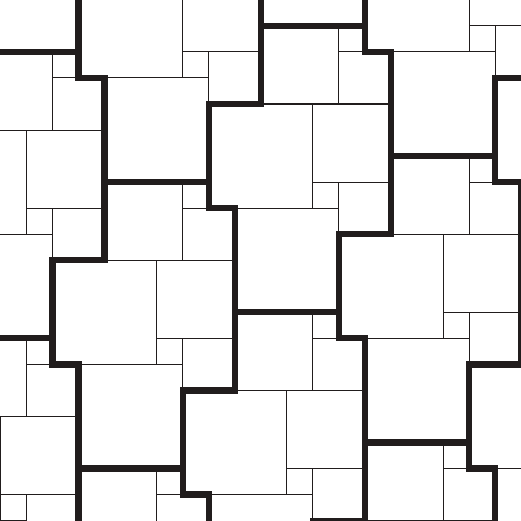}\caption{}\label{fig:13-fd}
\end{subfigure}
\caption{$( 2^1;3^2;4^3;4^3\,\, , \,\,3^1;3^1;4^2;4^3.1\,\, , \,\,4^0;1^0.2;3^2.4;2^0.3\,\, , \,\,2^0.3;3^0.1;4^1.2;1^0.4 )$}\label{fig:13}
\end{figure}

\begin{figure}[H]
\centering
\begin{subfigure}{0.45\textwidth}
\centering
\includegraphics[width=.75\textwidth]{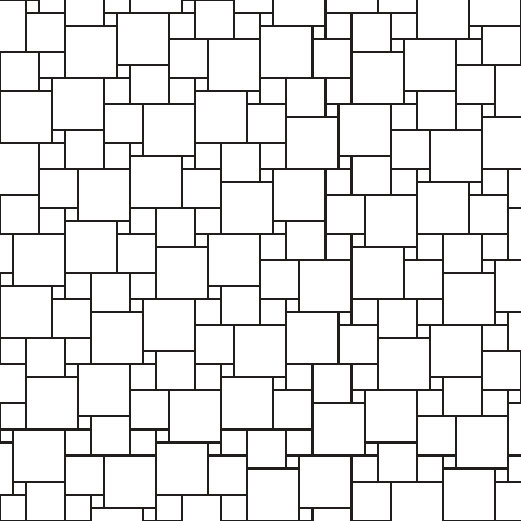}\caption{}\label{fig:14-plain}
\end{subfigure}
\begin{subfigure}{0.45\textwidth}
\centering
\includegraphics[width=.75\textwidth]{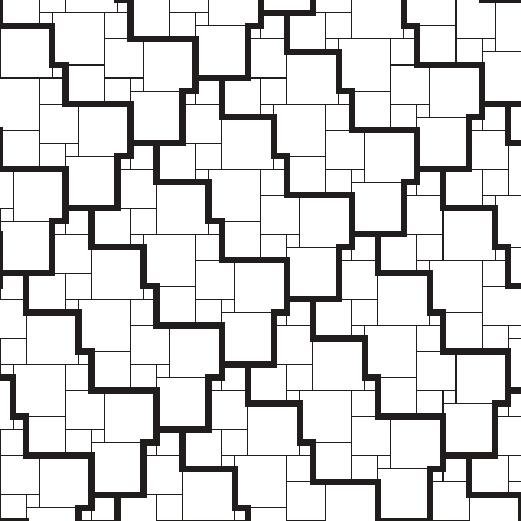}\caption{}\label{fig:14-fd}
\end{subfigure}
\caption{$( 2^1;3^2;4^3;4^3\,\, , \,\,3^1;3^1;4^2;4^3.1\,\, , \,\,4^0;1^0.2;3^2.4;2^0.3\,\, , \,\,1^0.4;1^0.3;3^1.2;4^1.2 )$}\label{fig:14}
\end{figure}

\begin{figure}[H]
\centering
\begin{subfigure}{0.45\textwidth}
\centering
\includegraphics[width=.75\textwidth]{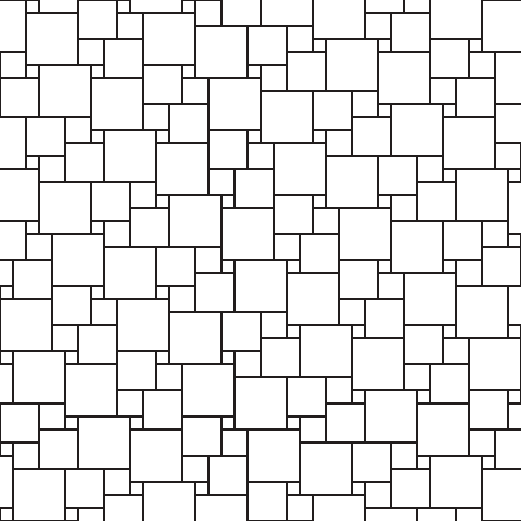}\caption{}\label{fig:15-plain}
\end{subfigure}
\begin{subfigure}{0.45\textwidth}
\centering
\includegraphics[width=.75\textwidth]{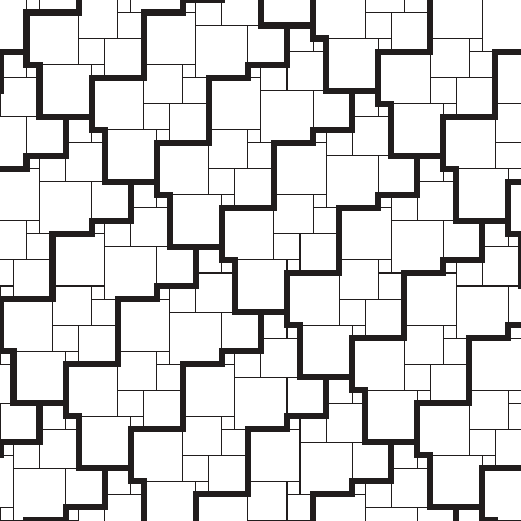}\caption{}\label{fig:15-fd}
\end{subfigure}
\caption{$( 2^1;3^2;4^3;4^3\,\, , \,\,3^1;3^1;4^2;4^3.1\,\, , \,\,4^0;1^0.2;3^2.2;4^2.3\,\, , \,\,2^0.3;3^0.1;4^1.2;1^0.4 )$}\label{fig:15}
\end{figure}

\begin{figure}[H]
\centering
\begin{subfigure}{0.45\textwidth}
\centering
\includegraphics[width=.75\textwidth]{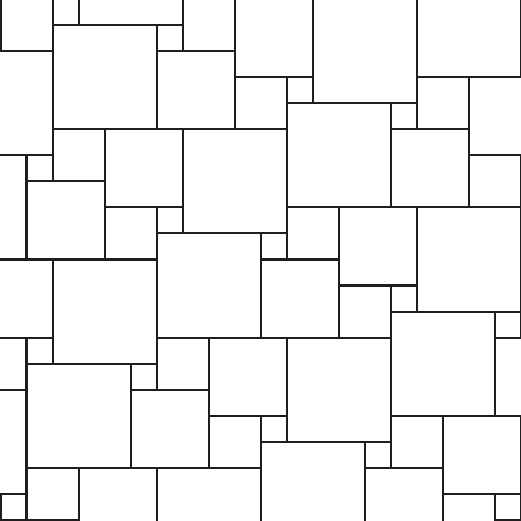}\caption{}\label{fig:16-plain}
\end{subfigure}
\begin{subfigure}{0.45\textwidth}
\centering
\includegraphics[width=.75\textwidth]{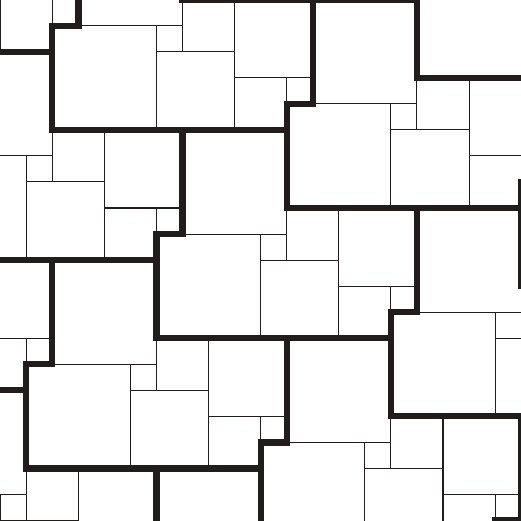}\caption{}\label{fig:16-fd}
\end{subfigure}
\caption{$( 2^1;3^2;4^3;4^3\,\, , \,\,3^1;3^1;4^2;4^3.1\,\, , \,\,4^0;1^0.2;3^2.2;4^2.3\,\, , \,\,1^0.4;1^0.3;3^1.2;4^1.2 )$}\label{fig:16}
\end{figure}

\begin{figure}[H]
\centering
\begin{subfigure}{0.45\textwidth}
\centering
\includegraphics[width=.75\textwidth]{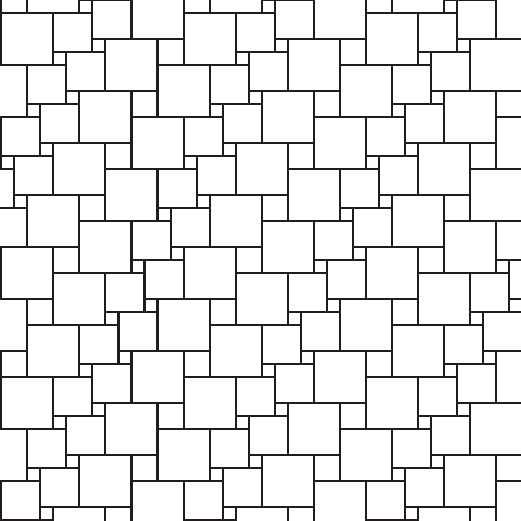}\caption{}\label{fig:17-plain}
\end{subfigure}
\begin{subfigure}{0.45\textwidth}
\centering
\includegraphics[width=.75\textwidth]{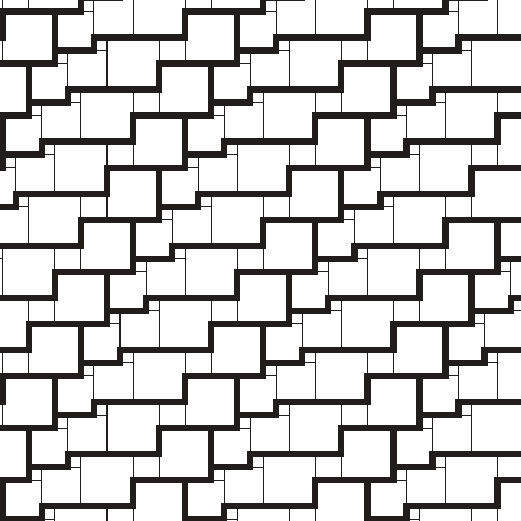}\caption{}\label{fig:17-fd}
\end{subfigure}
\caption{$( 3^2;3^2;3^2;4^3\,\, , \,\,4^2;4^2;4^2;4^2\,\, , \,\,4^1;4^2.1;3^1.1;3^1.1\,\, , \,\,2^0.4;3^0.1;3^1.4;2^0.4 )$}\label{fig:17}
\end{figure}

\begin{figure}[H]
\centering
\begin{subfigure}{0.45\textwidth}
\centering
\includegraphics[width=.75\textwidth]{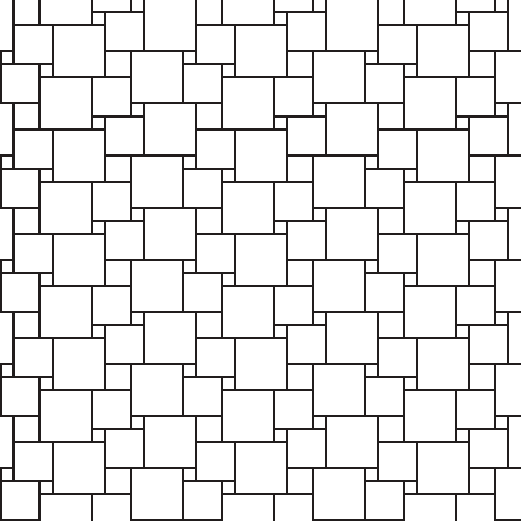}\caption{}\label{fig:18-plain}
\end{subfigure}
\begin{subfigure}{0.45\textwidth}
\centering
\includegraphics[width=.75\textwidth]{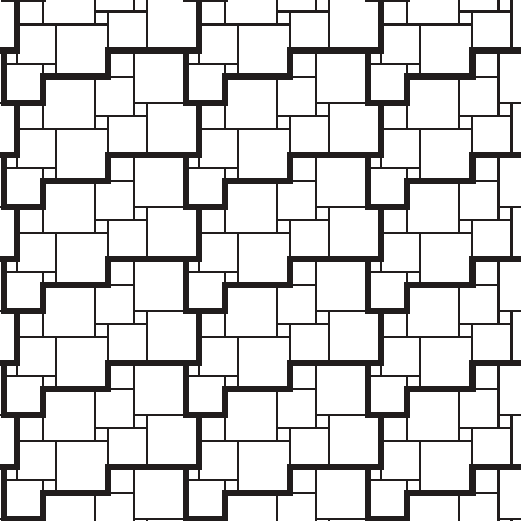}\caption{}\label{fig:18-fd}
\end{subfigure}
\caption{$( 3^2;3^2;4^3;4^3\,\, , \,\,3^1;4^2;3^1;4^2\,\, , \,\,4^1;4^3.2;4^2.1;3^1.1\,\, , \,\,1^0.4;3^0.1;4^1.3;2^0.3 )$}\label{fig:18}
\end{figure}

\begin{figure}[H]
\centering
\begin{subfigure}{0.45\textwidth}
\centering
\includegraphics[width=.75\textwidth]{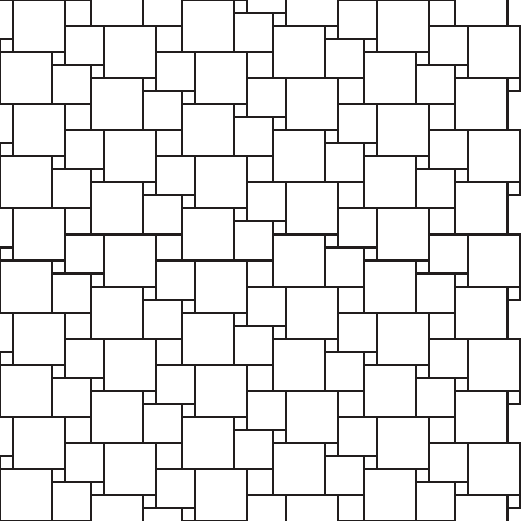}\caption{}\label{fig:19-plain}
\end{subfigure}
\begin{subfigure}{0.45\textwidth}
\centering
\includegraphics[width=.75\textwidth]{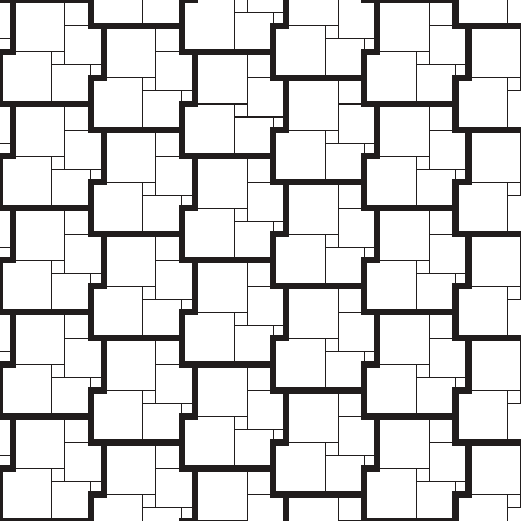}\caption{}\label{fig:19-fd}
\end{subfigure}
\caption{$( 3^2;3^2;4^3;4^3\,\, , \,\,3^1;4^2;3^1;4^2\,\, , \,\,4^1;4^3.2;4^2.1;3^1.1\,\, , \,\,1^0.4;1^0.3;4^1.3;2^0.3 )$}\label{fig:19}
\end{figure}

\begin{figure}[H]
\centering
\begin{subfigure}{0.45\textwidth}
\centering
\includegraphics[width=.75\textwidth]{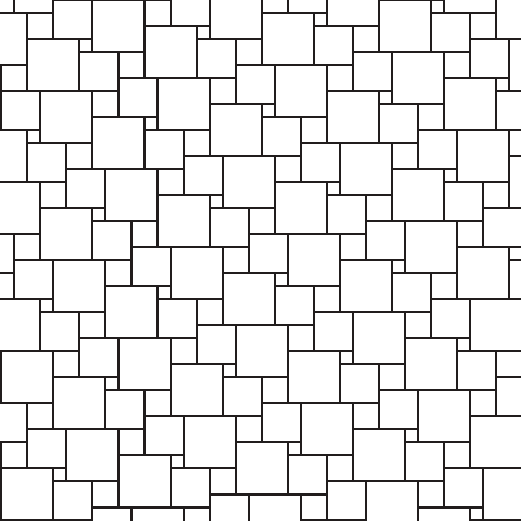}\caption{}\label{fig:20-plain}
\end{subfigure}
\begin{subfigure}{0.45\textwidth}
\centering
\includegraphics[width=.75\textwidth]{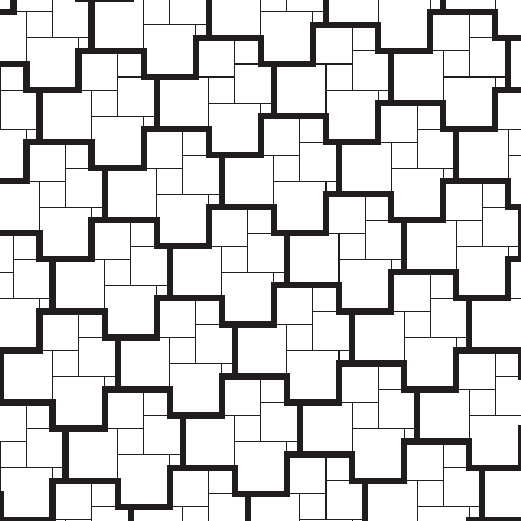}\caption{}\label{fig:20-fd}
\end{subfigure}
\caption{$( 3^2;4^3;3^2;4^3\,\, , \,\,3^1;3^1;4^2;4^2\,\, , \,\,4^1;4^3.2;3^2.2;4^2.1\,\, , \,\,2^0.3;1^0.3;4^1.3;2^0.4 )$}\label{fig:20}
\end{figure}

\begin{figure}[H]
\centering
\begin{subfigure}{0.45\textwidth}
\centering
\includegraphics[width=.75\textwidth]{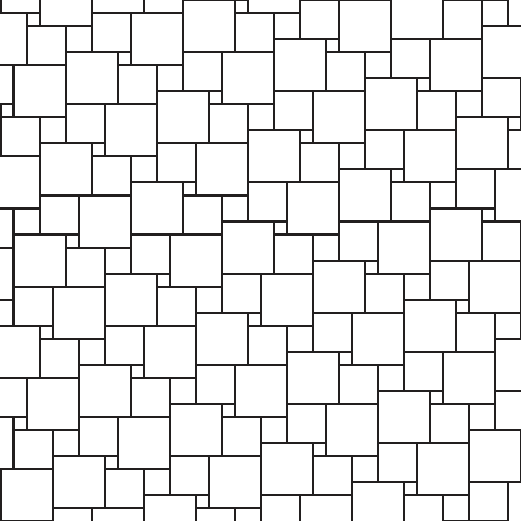}\caption{}\label{fig:21-plain}
\end{subfigure}
\begin{subfigure}{0.45\textwidth}
\centering
\includegraphics[width=.75\textwidth]{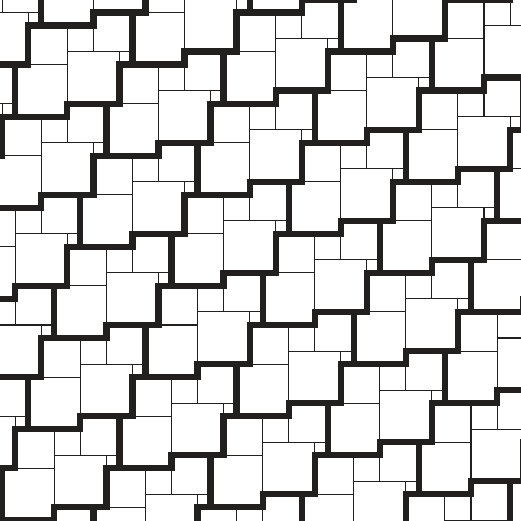}\caption{}\label{fig:21-fd}
\end{subfigure}
\caption{$( 3^2;4^3;3^2;4^3\,\, , \,\,3^1;4^2;3^1;4^2\,\, , \,\,4^1;4^1;4^3.2;4^2.1\,\, , \,\,2^0.3;1^0.3;4^3.3;4^1.3 )$}\label{fig:21}
\end{figure}

\begin{figure}[H]
\centering
\begin{subfigure}{0.45\textwidth}
\centering
\includegraphics[width=.75\textwidth]{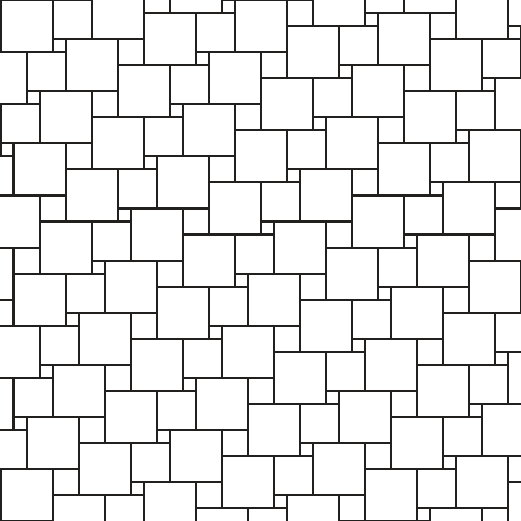}\caption{}\label{fig:22-plain}
\end{subfigure}
\begin{subfigure}{0.45\textwidth}
\centering
\includegraphics[width=.75\textwidth]{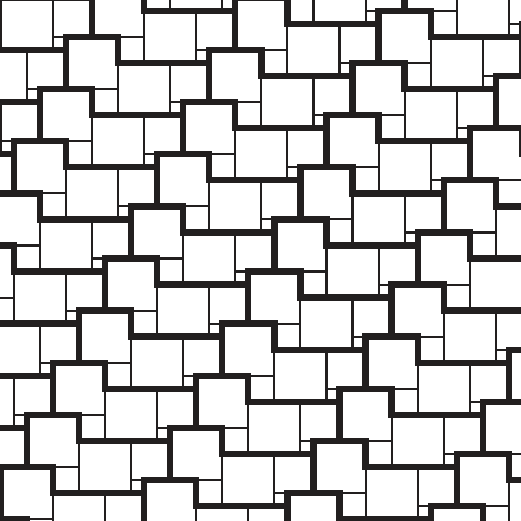}\caption{}\label{fig:22-fd}
\end{subfigure}
\caption{$( 3^2;4^3;3^2;4^3\,\, , \,\,4^2;4^2;4^2;4^2\,\, , \,\,4^1;4^2.1;4^1;4^2.1\,\, , \,\,2^0.4;3^0.1;3^1.4;2^0.4 )$}\label{fig:22}
\end{figure}

\begin{figure}[H]
\centering
\begin{subfigure}{0.45\textwidth}
\centering
\includegraphics[width=.75\textwidth]{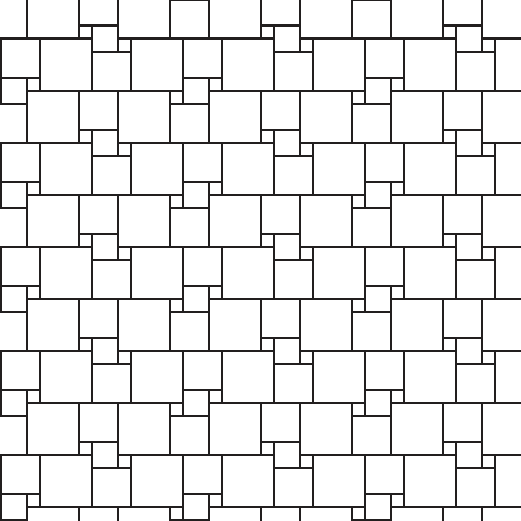}\caption{}\label{fig:23-plain}
\end{subfigure}
\begin{subfigure}{0.45\textwidth}
\centering
\includegraphics[width=.75\textwidth]{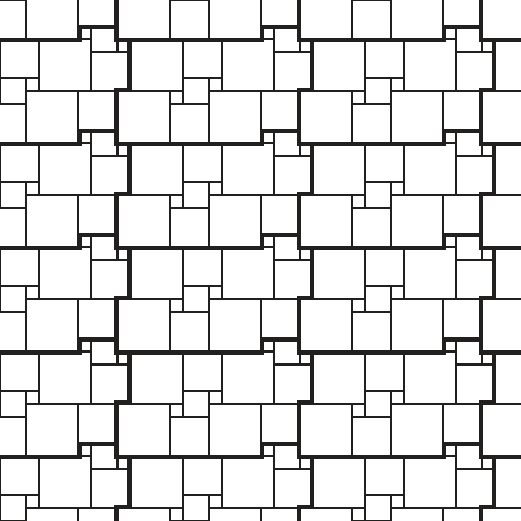}\caption{}\label{fig:23-fd}
\end{subfigure}
\caption{$( 2^1;3^2;4^3;4^3\,\, , \,\,3^1;4^3.1;3^1;4^3.1\,\, , \,\,4^0;2^0.1;4^1;4^3.3\,\, , \,\,1^0.4;3^0.1;4^1.3;3^0.2 )$}\label{fig:23}
\end{figure}

\begin{figure}[H]
\centering
  \begingroup
  \renewcommand{\thefigure}{T24(a)} 
\begin{subfigure}{0.45\textwidth}
\centering
\includegraphics[width=.75\textwidth]{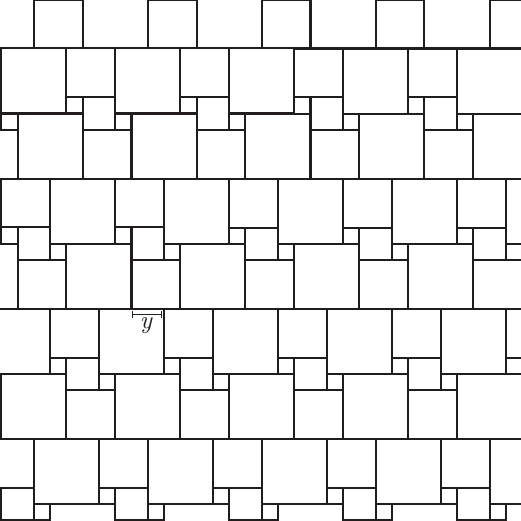}\caption{}\label{fig:24-plain}
\end{subfigure}
\begin{subfigure}{0.45\textwidth}
\centering
\includegraphics[width=.75\textwidth]{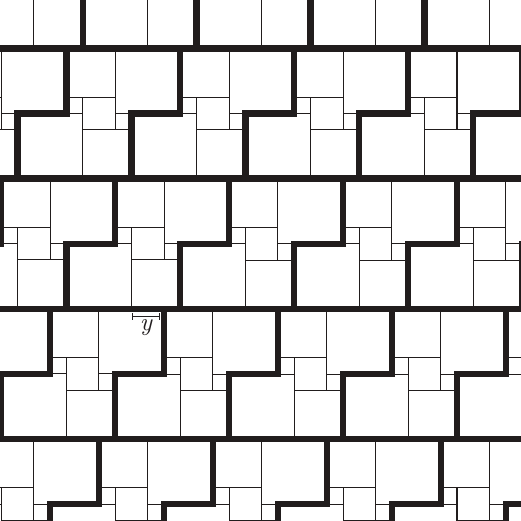}\caption{}\label{fig:24-fd}
\end{subfigure} 
\caption[T24(a)]{A tiling from the infinite family $\{\mathcal{T}_y\}$ of tilings in which both the 3- and 4-coronas are faulty. The parameter $y$ can vary continuously in $0 < y < 7$.}\label{fig:24}
\par
\endgroup 
\end{figure}

\begin{figure}[H]
\centering
  \begingroup
  \renewcommand{\thefigure}{T24(b)} 
\begin{subfigure}{0.45\textwidth}
\centering
\includegraphics[width=.75\textwidth]{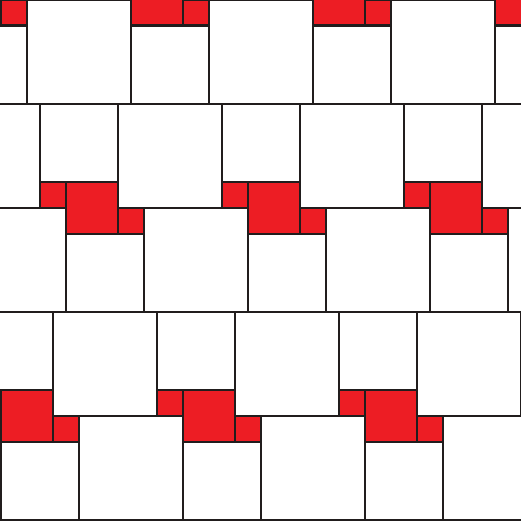}\caption{}\label{fig:24-half-plain}
\end{subfigure}
\begin{subfigure}{0.45\textwidth}
\centering
\includegraphics[width=.75\textwidth]{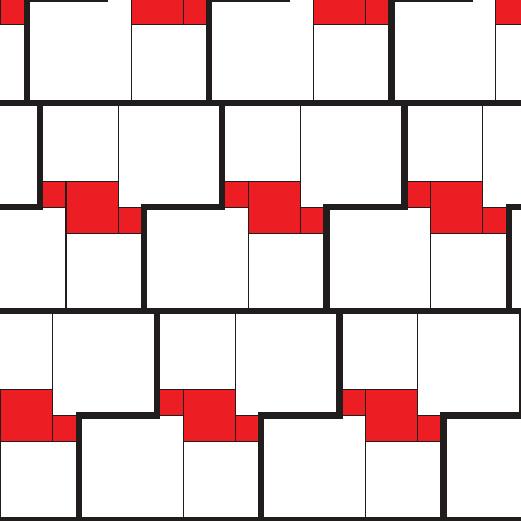}\caption{}\label{fig:24-half-fd}
\end{subfigure}
\caption[T24(b)]{With $y = 7/2$ in the family $\{\mathcal{T}_{y}\}$, the corresponding 4-tuple is 
$( 2^1;3^2;4^3;4^3\,\, , \,\,3^1;4^3.1;3^1;4^3.1\,\, , \,\,4^{1/2};4^0;2^0.1;4^1\,\, , \,\,4^{7/2}.3.4;3^0.1;4^1.1;2^1.3 )$}\label{fig:24-half}
 \endgroup 
\end{figure}

\begin{figure}[H]
\centering
  \begingroup
  \renewcommand{\thefigure}{T25} 
\begin{subfigure}{0.45\textwidth}
\centering
\includegraphics[width=.75\textwidth]{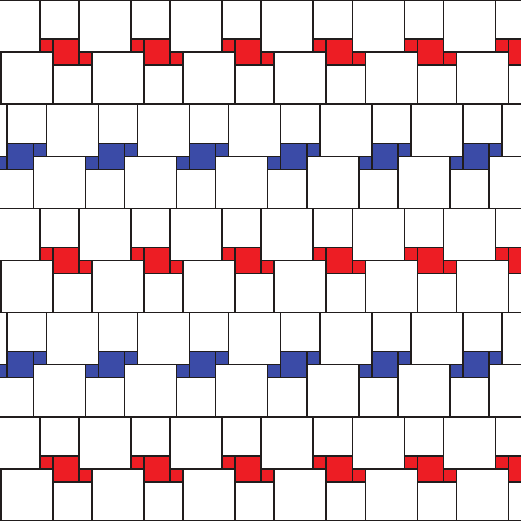}\caption{}\label{fig:25-ABAB-plain}
\end{subfigure}
\begin{subfigure}{0.45\textwidth}
\centering
\includegraphics[width=.75\textwidth]{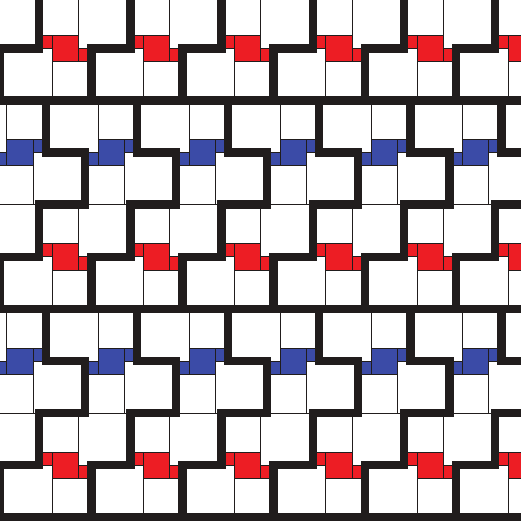}\caption{}\label{fig:25-ABAB-fd}
\end{subfigure}
\caption[T25]{The alternating row orientation tiling generated by the 4-tuple $( 2^1;3^2;4^3;4^3\,\, , \,\,3^1;4^3.1;3^1;4^3.1\,\, , \,\,4^{1/2};4^0;2^0.1;4^1\,\, , \,\,4^{7/2}.3.4;3^0.1;4^1.1;2^1.3 )$}\label{fig:25}
 \endgroup 
\end{figure}

\pagebreak

\section{Coronas found by the computer algorithm (Rows 1-29 of Table \ref{tab:29-tuples})}\label{apx:all-coronas}

\renewcommand{\thefigure}{C\arabic{figure}}
\setcounter{figure}{0}

\begin{figure}[H]
\centering
\includegraphics[width=.75\textwidth]{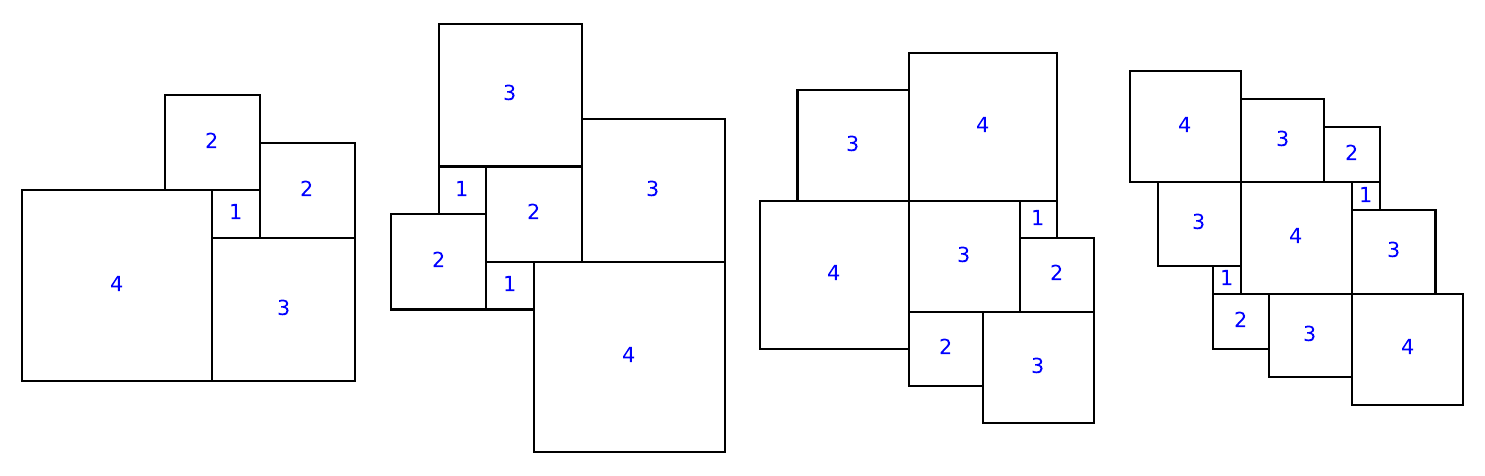}\caption{$( 2^1;2^1;3^2;4^3\,\, , \,\,3^1;3^1;4^3.1;2^1.1\,\, , \,\,3^3.4;1^0.2;3^2.2;4^1\,\, , \,\,4^4.3.2;1^0.3;4^4.3.2;1^0.3 )$}\label{fig:cor-1}
\end{figure}

\begin{figure}[H]
\centering
\includegraphics[width=.75\textwidth]{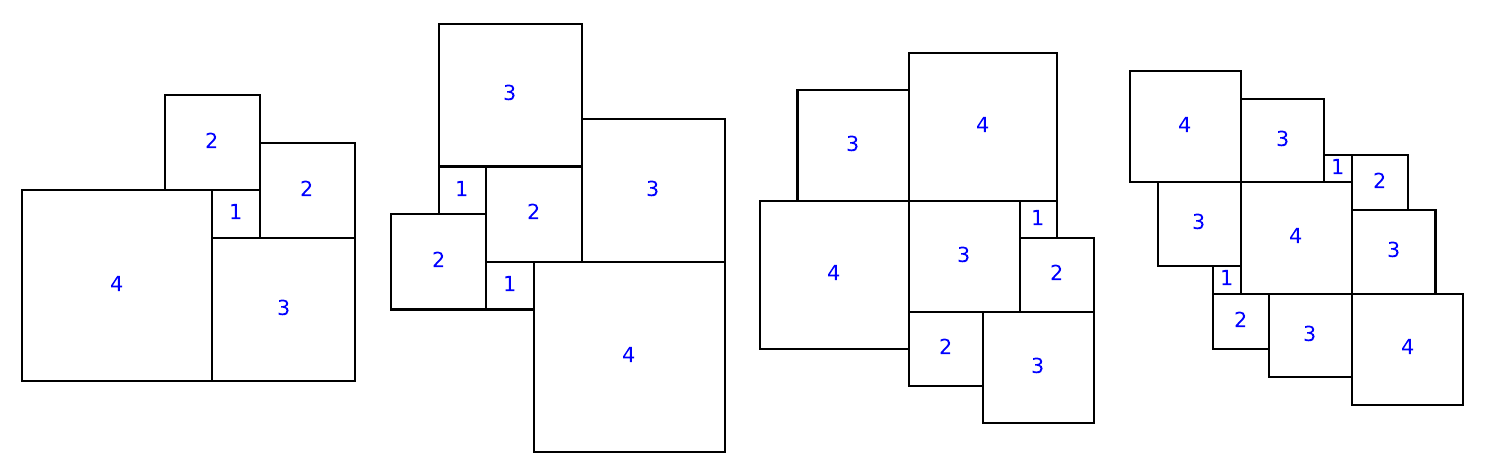}\caption{$( 2^1;2^1;3^2;4^3\,\, , \,\,3^1;3^1;4^3.1;2^1.1\,\, , \,\,3^3.4;1^0.2;3^2.2;4^1\,\, , \,\,4^4.3.1;2^1.3;4^4.3.2;1^0.3 )$}\label{fig:cor-2}
\end{figure}

\begin{figure}[H]
\centering
\includegraphics[width=.75\textwidth]{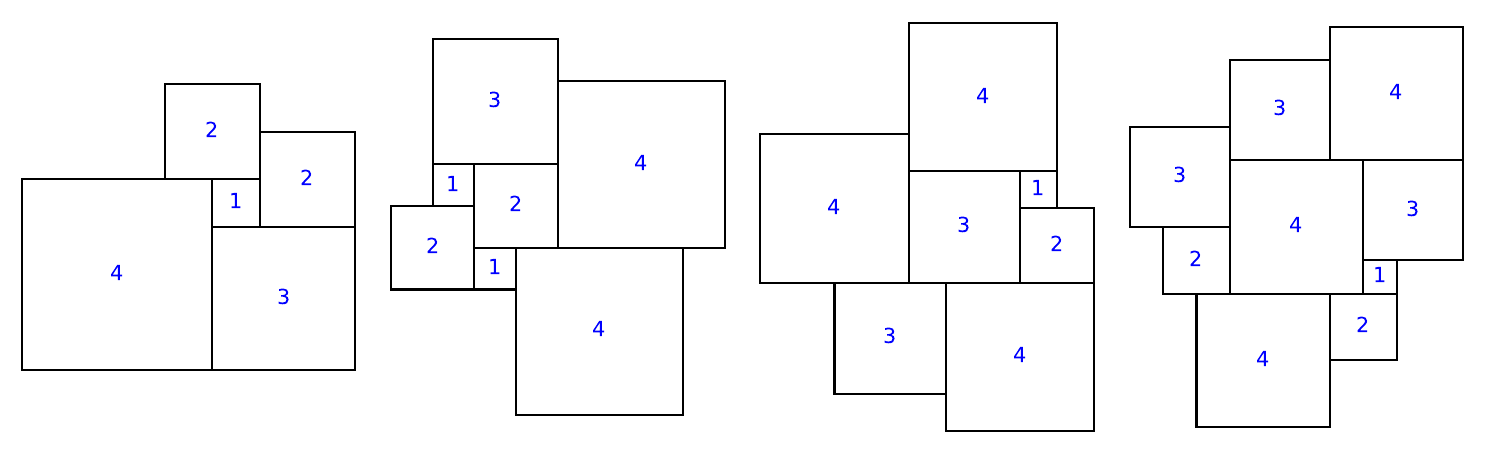}\caption{$( 2^1;2^1;3^2;4^3\,\, , \,\,3^1;4^2;4^3.1;2^1.1\,\, , \,\,4^0;1^0.2;4^2.3;4^0\,\, , \,\,3^0.4;3^0.1;2^1.4;2^0.3 )$}\label{fig:cor-3}
\end{figure}

\begin{figure}[H]
\centering
\includegraphics[width=.75\textwidth]{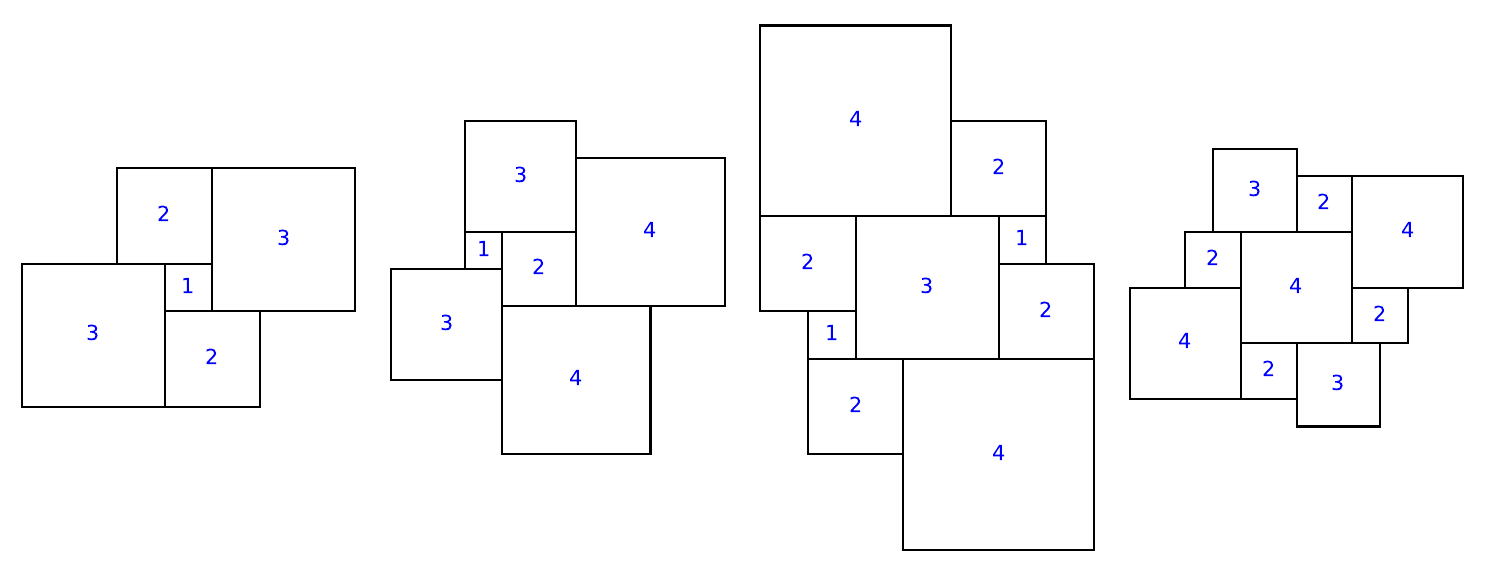}\caption{$( 2^1;3^2;2^1;3^2\,\, , \,\,3^1;4^2;4^2;3^2.1\,\, , \,\,4^2.2;1^0.2;4^2.2;1^0.2\,\, , \,\,3^1.2;4^2.2;3^1.2;4^2.2 )$}\label{fig:cor-4}
\end{figure}

\begin{figure}[H]
\centering
\includegraphics[width=.75\textwidth]{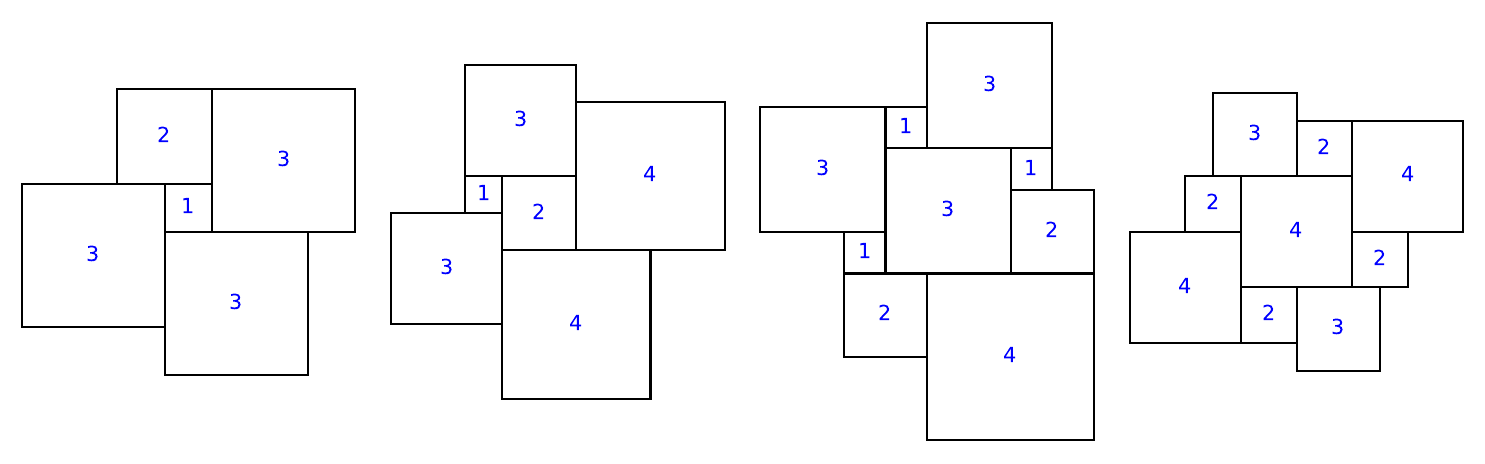}\caption{$( 2^1;3^2;3^2;3^2\,\, , \,\,3^1;4^2;4^2;3^2.1\,\, , \,\,1^0.3;1^0.2;4^2.2;1^0.3\,\, , \,\,3^1.2;4^2.2;3^1.2;4^2.2 )$}\label{fig:cor-5}
\end{figure}

\begin{figure}[H]
\centering
\includegraphics[width=.75\textwidth]{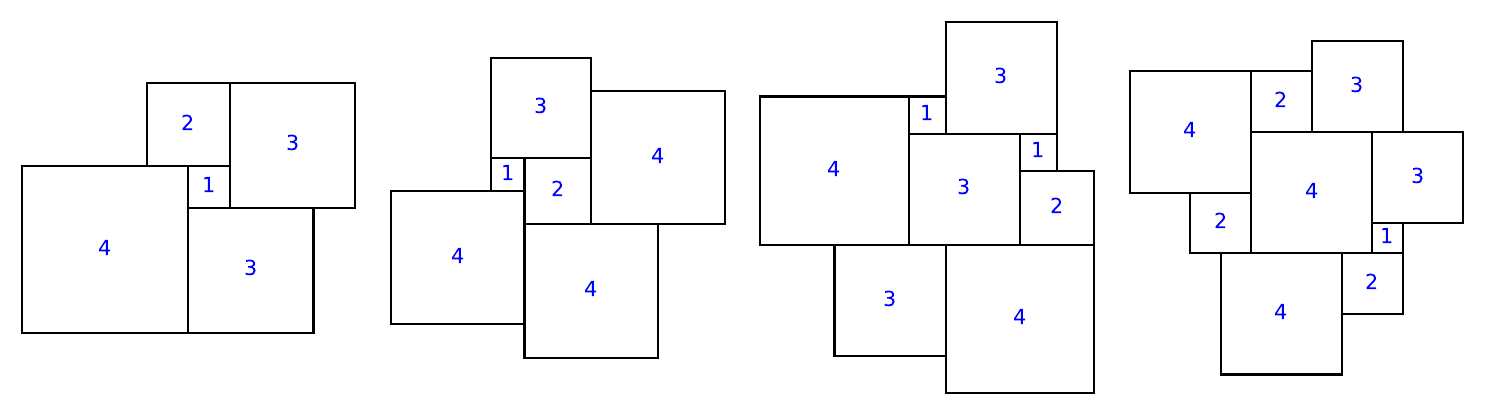}\caption{$( 2^1;3^2;3^2;4^3\,\, , \,\,3^1;4^2;4^2;4^3.1\,\, , \,\,1^0.3;1^0.2;4^2.3;4^0\,\, , \,\,2^0.3;3^0.1;2^1.4;2^0.4 )$}\label{fig:cor-6}
\end{figure}

\begin{figure}[H]
\centering
\includegraphics[width=.75\textwidth]{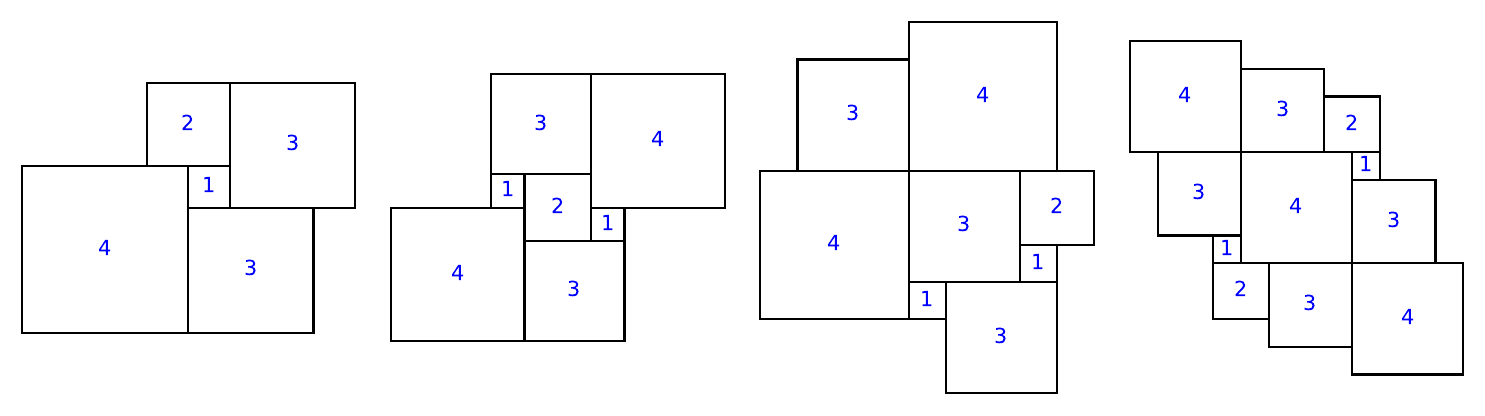}\caption{$( 2^1;3^2;3^2;4^3\,\, , \,\,3^1;4^3.1;3^1;4^3.1\,\, , \,\,3^3.4;2^0.1;3^1.1;4^1\,\, , \,\,4^4.3.2;1^0.3;4^4.3.2;1^0.3 )$}\label{fig:cor-7}
\end{figure}

\begin{figure}[H]
\centering
\includegraphics[width=.75\textwidth]{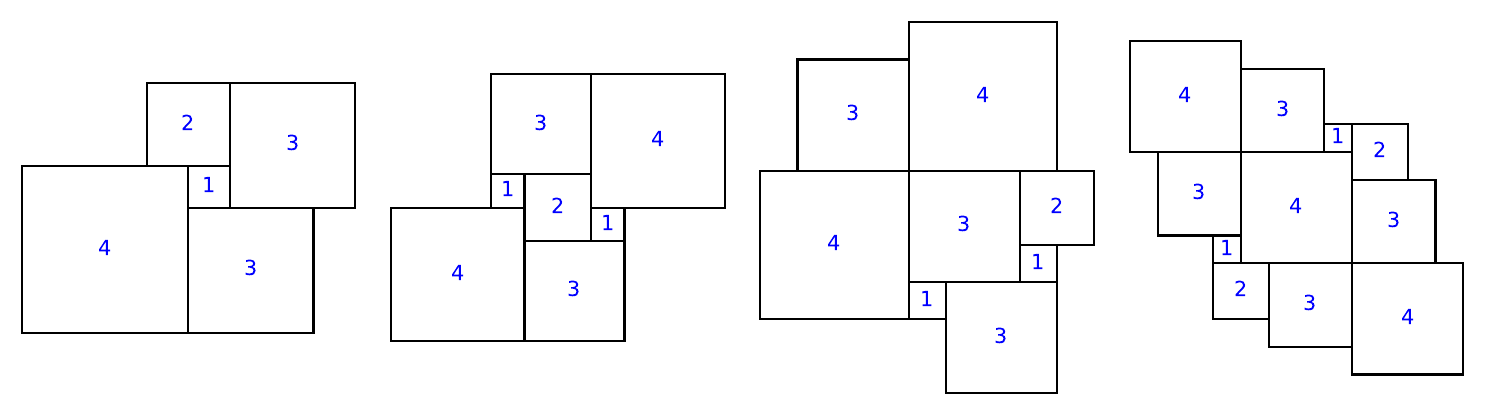}\caption{$( 2^1;3^2;3^2;4^3\,\, , \,\,3^1;4^3.1;3^1;4^3.1\,\, , \,\,3^3.4;2^0.1;3^1.1;4^1\,\, , \,\,4^4.3.1;2^1.3;4^4.3.2;1^0.3 )$}\label{fig:cor-8}
\end{figure}

\begin{figure}[H]
\centering
\includegraphics[width=.75\textwidth]{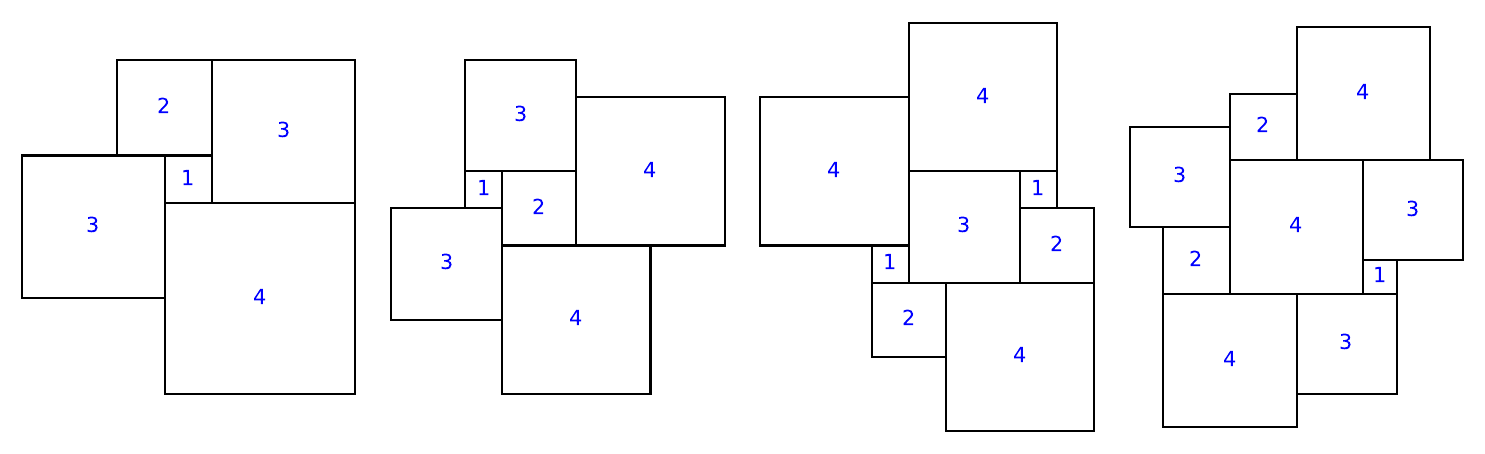}\caption{$( 2^1;3^2;4^3;3^2\,\, , \,\,3^1;4^2;4^2;3^2.1\,\, , \,\,4^0;1^0.2;4^2.2;1^0.4\,\, , \,\,2^0.4;3^0.1;3^1.4;2^0.3 )$}\label{fig:cor-9}
\end{figure}

\begin{figure}[H]
\centering
\includegraphics[width=.75\textwidth]{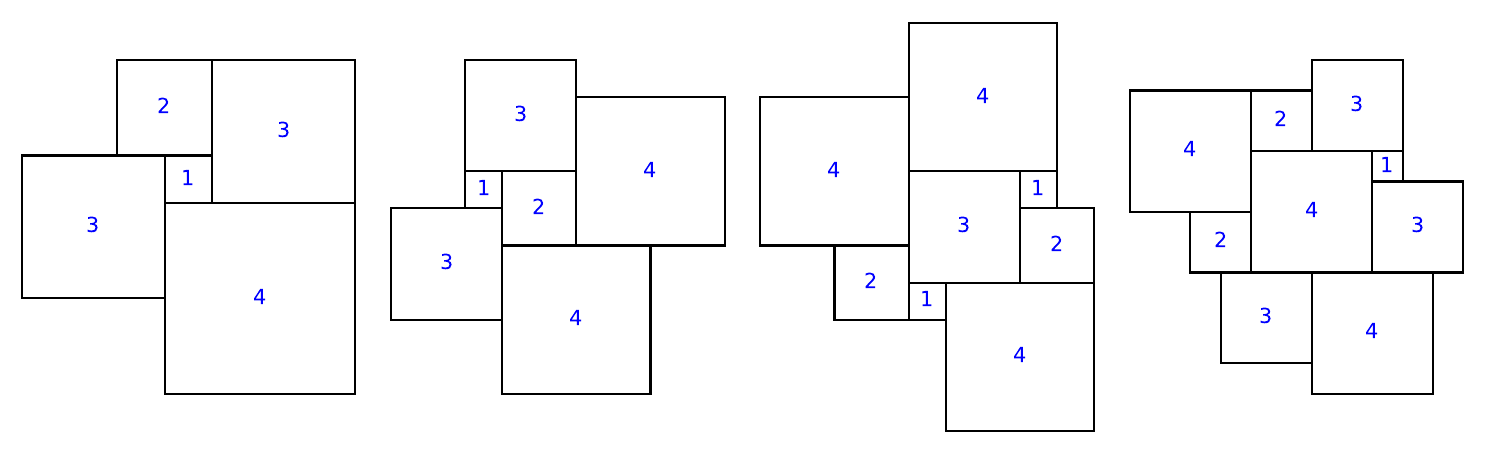}\caption{$( 2^1;3^2;4^3;3^2\,\, , \,\,3^1;4^2;4^2;3^2.1\,\, , \,\,4^0;1^0.2;4^2.1;2^1.4\,\, , \,\,2^0.3;1^0.3;4^2.3;2^0.4 )$}\label{fig:cor-10}
\end{figure}

\begin{figure}[H]
\centering
\includegraphics[width=.75\textwidth]{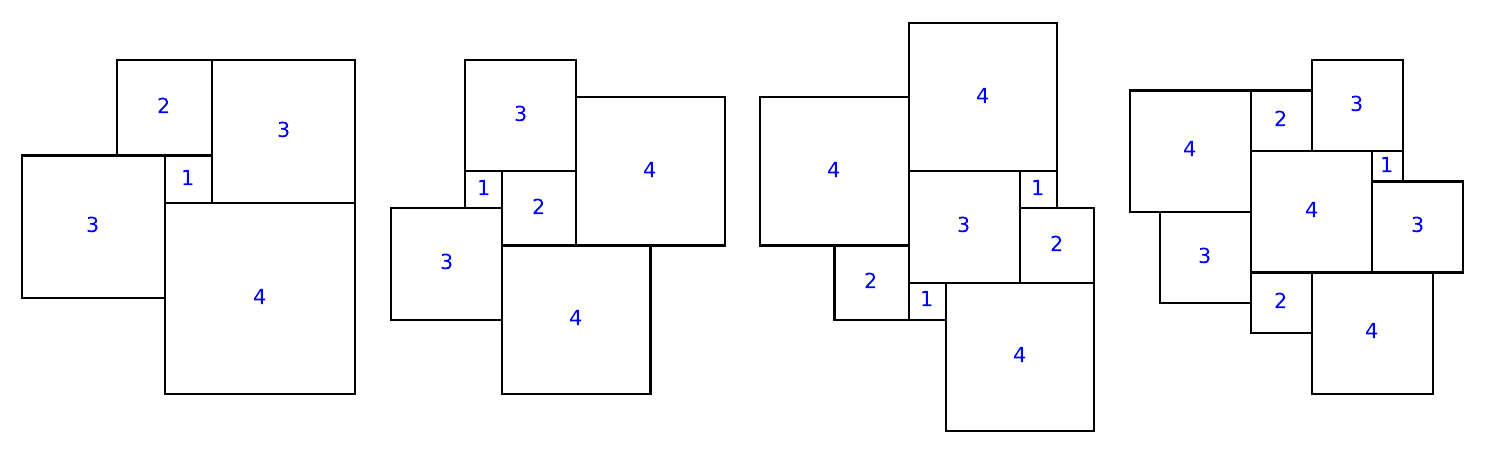}\caption{$( 2^1;3^2;4^3;3^2\,\, , \,\,3^1;4^2;4^2;3^2.1\,\, , \,\,4^0;1^0.2;4^2.1;2^1.4\,\, , \,\,2^0.3;1^0.3;4^2.2;3^1.4 )$}\label{fig:cor-11}
\end{figure}

\begin{figure}[H]
\centering
\includegraphics[width=.75\textwidth]{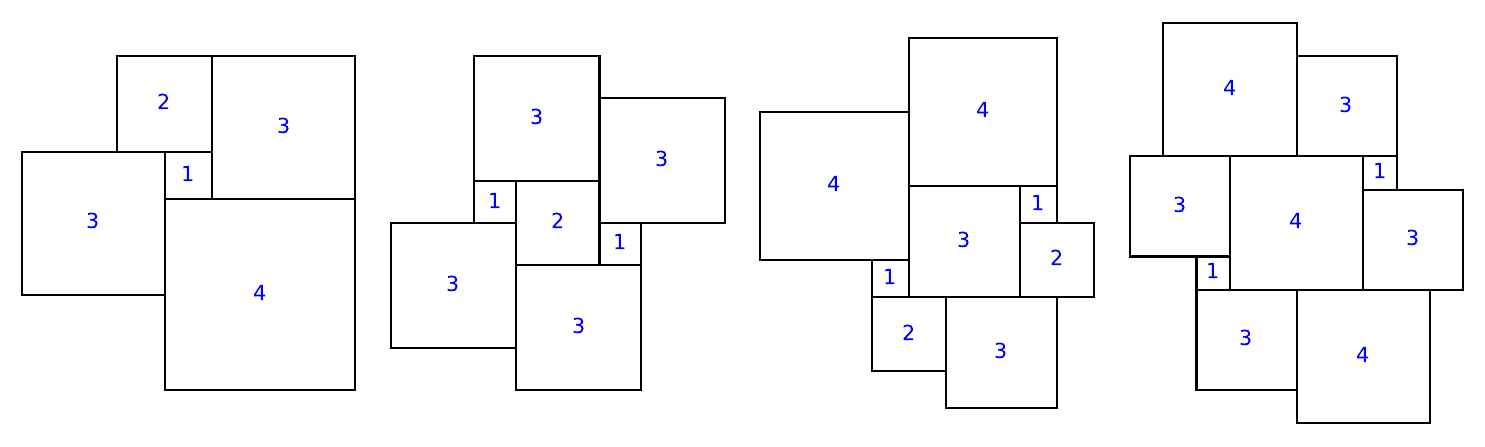}\caption{$( 2^1;3^2;4^3;3^2\,\, , \,\,3^1;3^2.1;3^1;3^2.1\,\, , \,\,4^0;1^0.2;3^1.2;1^0.4\,\, , \,\,4^2.3;1^0.3;4^2.3;1^0.3 )$}\label{fig:cor-12}
\end{figure}

\begin{figure}[H]
\centering
\includegraphics[width=.75\textwidth]{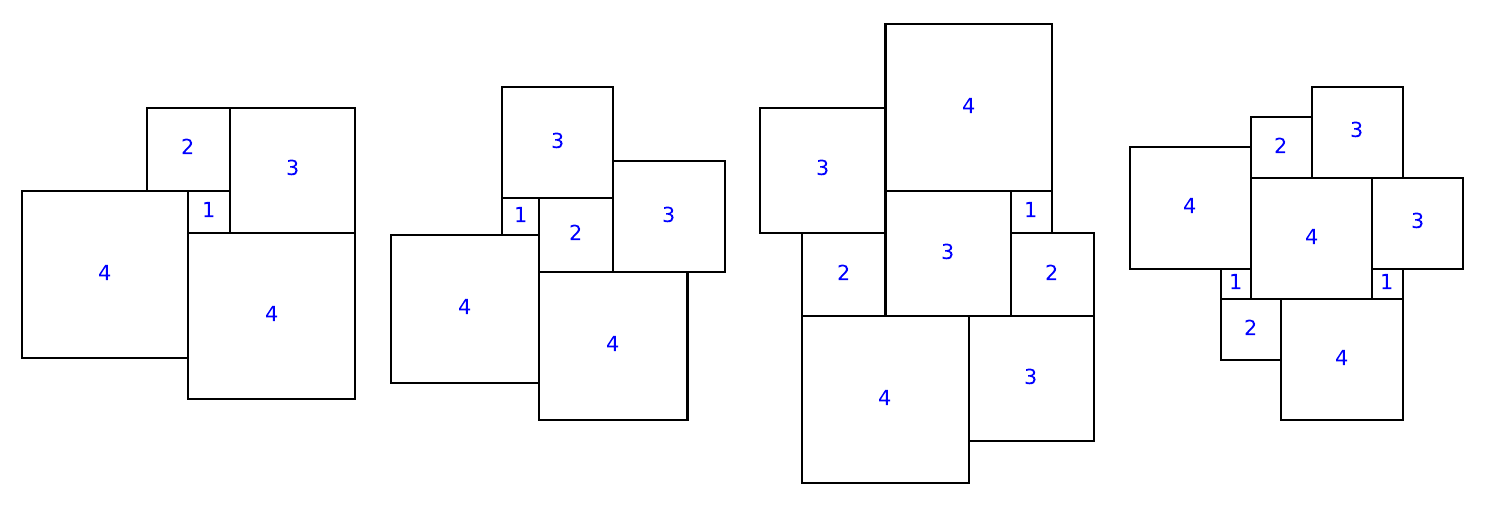}\caption{$( 2^1;3^2;4^3;4^3\,\, , \,\,3^1;3^1;4^2;4^3.1\,\, , \,\,4^0;1^0.2;3^2.4;2^0.3\,\, , \,\,2^0.3;3^0.1;4^1.2;1^0.4 )$}\label{fig:cor-13}
\end{figure}

\begin{figure}[H]
\centering
\includegraphics[width=.75\textwidth]{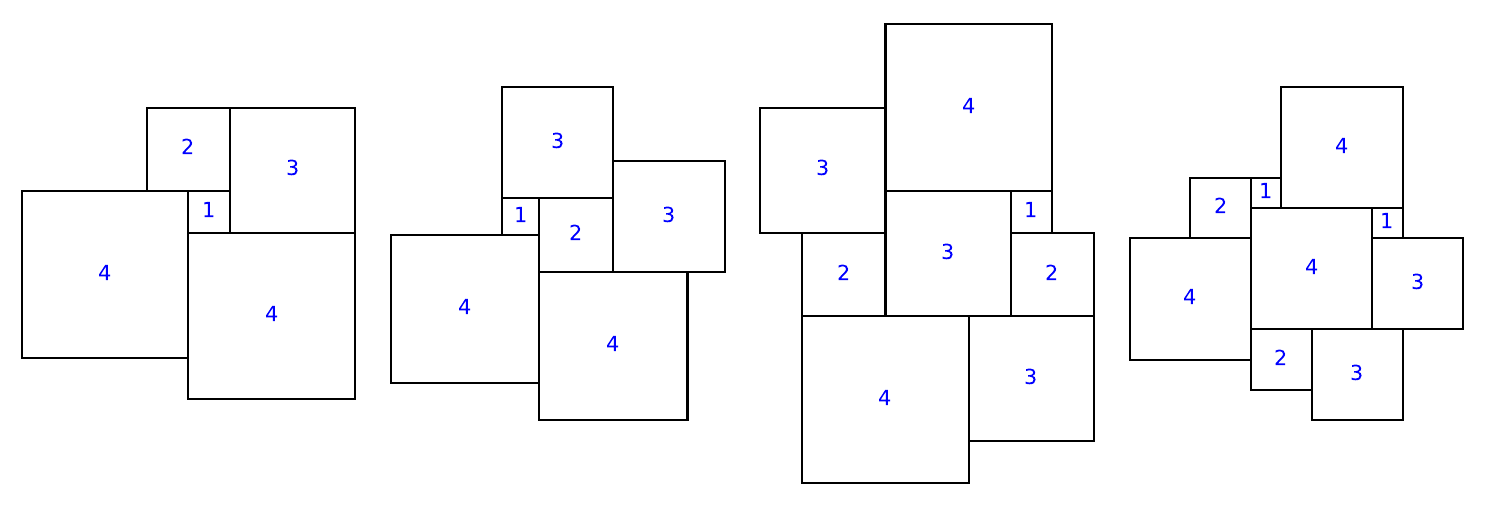}\caption{$( 2^1;3^2;4^3;4^3\,\, , \,\,3^1;3^1;4^2;4^3.1\,\, , \,\,4^0;1^0.2;3^2.4;2^0.3\,\, , \,\,1^0.4;1^0.3;3^1.2;4^1.2 )$}\label{fig:cor-14}
\end{figure}

\begin{figure}[H]
\centering
\includegraphics[width=.75\textwidth]{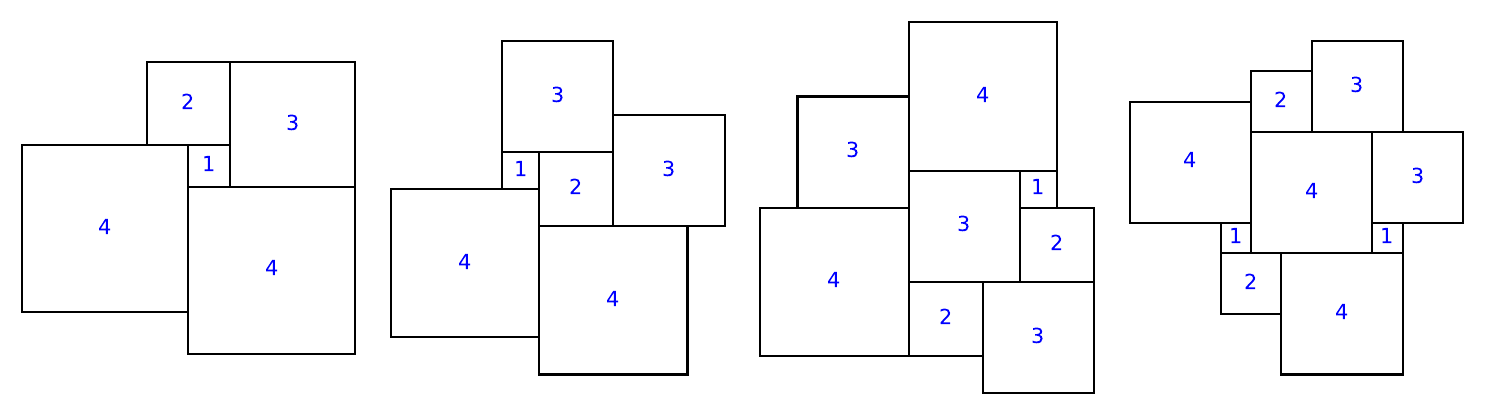}\caption{$( 2^1;3^2;4^3;4^3\,\, , \,\,3^1;3^1;4^2;4^3.1\,\, , \,\,4^0;1^0.2;3^2.2;4^2.3\,\, , \,\,2^0.3;3^0.1;4^1.2;1^0.4 )$}\label{fig:cor-15}
\end{figure}

\begin{figure}[H]
\centering
\includegraphics[width=.75\textwidth]{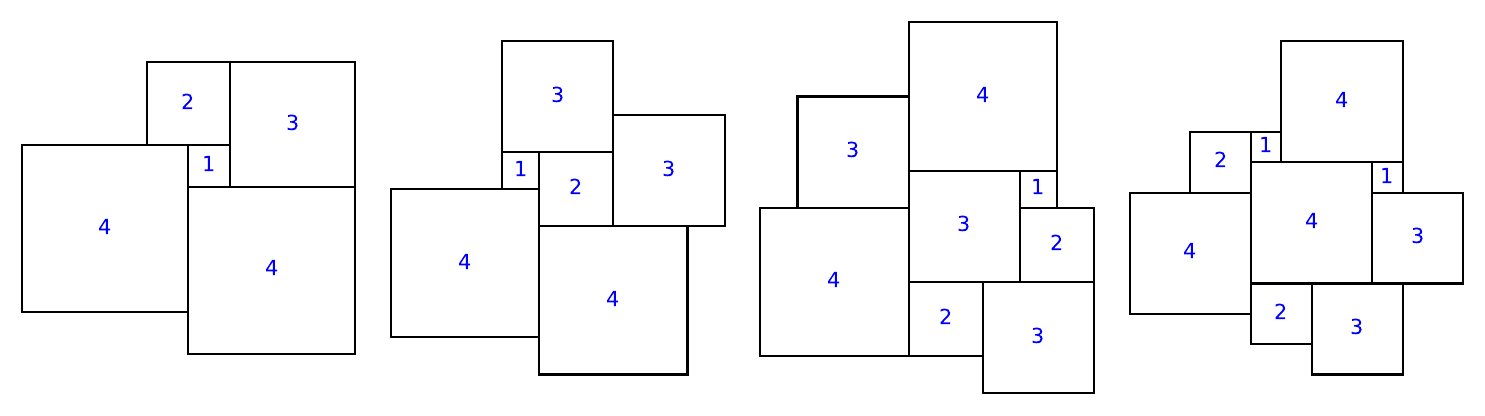}\caption{$( 2^1;3^2;4^3;4^3\,\, , \,\,3^1;3^1;4^2;4^3.1\,\, , \,\,4^0;1^0.2;3^2.2;4^2.3\,\, , \,\,1^0.4;1^0.3;3^1.2;4^1.2 )$}\label{fig:cor-16}
\end{figure}

\begin{figure}[H]
\centering
\includegraphics[width=.75\textwidth]{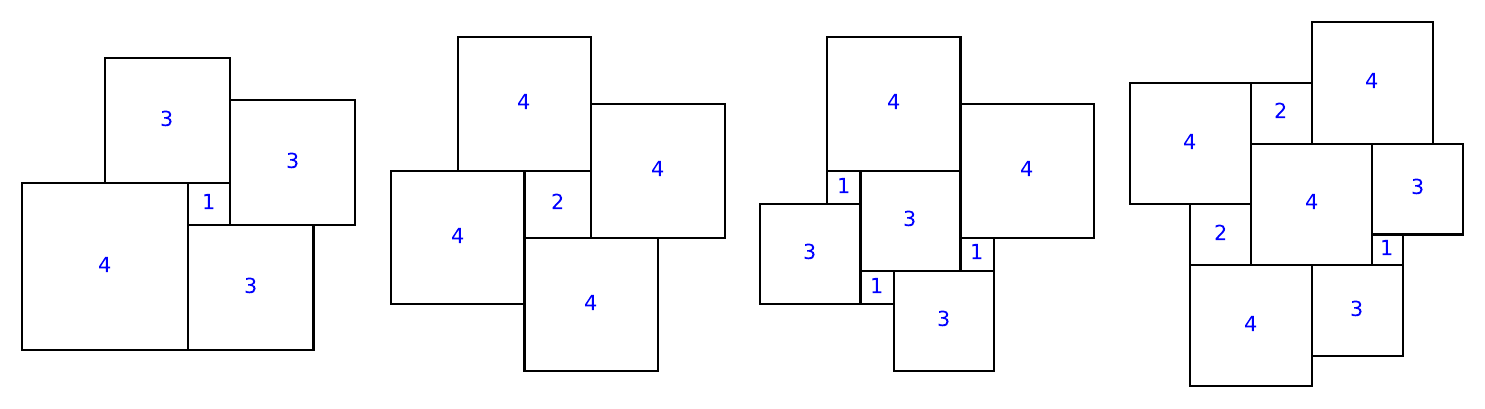}\caption{$(3^2;3^2;3^2;4^3 \,\, , \,\, 4^2;4^2;4^2;4^2 \,\, , \,\, 4^1;4^2.1;3^1.1;3^1.1 \,\, , \,\, 2^0.4;3^0.1;3^1.4;2^0.4)$}\label{fig:cor-17}
\end{figure}

\begin{figure}[H]
\centering
\includegraphics[width=.75\textwidth]{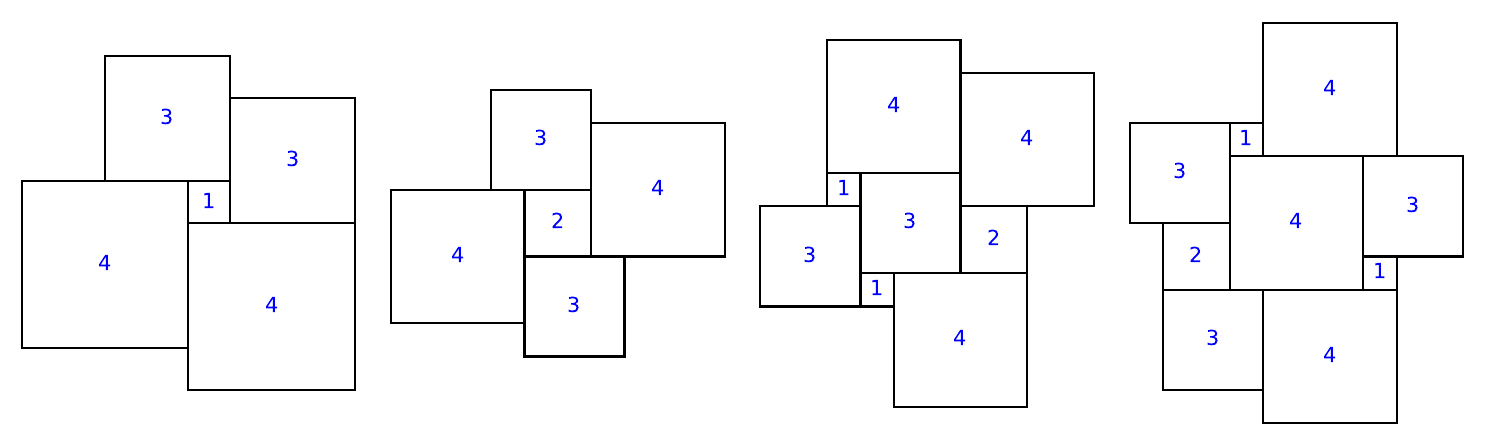}\caption{$(3^2;3^2;4^3;4^3 \,\, , \,\, 3^1;4^2;3^1;4^2 \,\, , \,\, 4^1;4^3.2;4^2.1;3^1.1 \,\, , \,\, 1^0.4;3^0.1;4^1.3;2^0.3)$}\label{fig:cor-18}
\end{figure}

\begin{figure}[H]
\centering
\includegraphics[width=.75\textwidth]{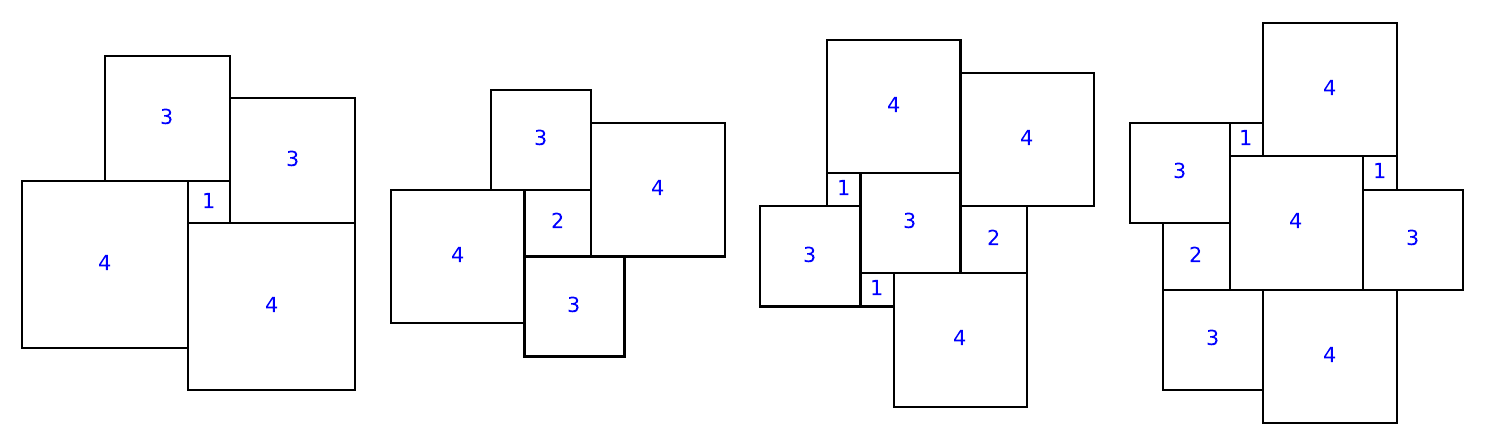}\caption{$(3^2;3^2;4^3;4^3 \,\, , \,\, 3^1;4^2;3^1;4^2 \,\, , \,\, 4^1;4^3.2;4^2.1;3^1.1 \,\, , \,\, 1^0.4;1^0.3;4^1.3;2^0.3)$}\label{fig:cor-19}
\end{figure}

\begin{figure}[H]
\centering
\includegraphics[width=.75\textwidth]{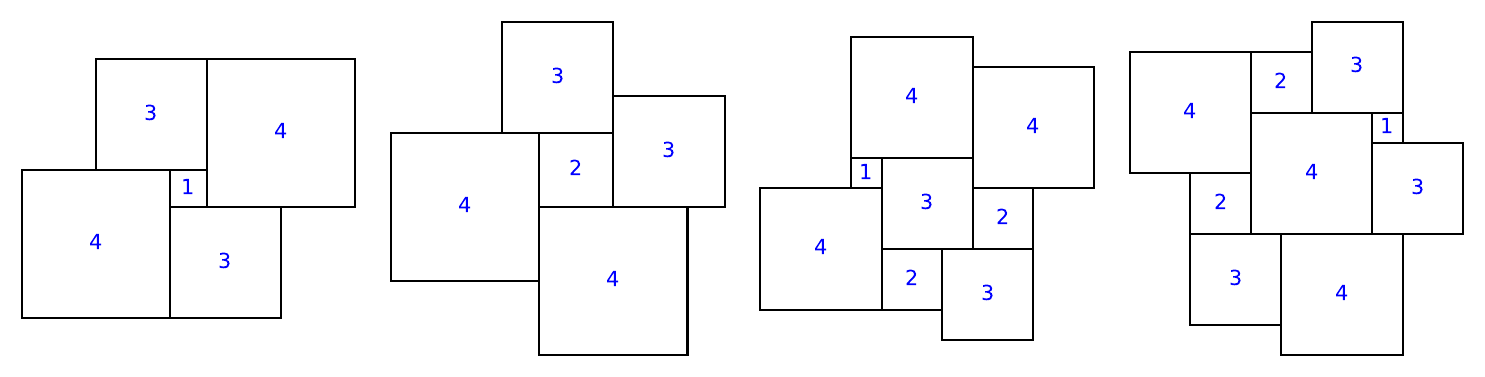}\caption{$(3^2;4^3;3^2;4^3 \,\, , \,\, 3^1;3^1;4^2;4^2 \,\, , \,\, 4^1;4^3.2;3^2.2;4^2.1 \,\, , \,\, 2^0.3;1^0.3;4^1.3;2^0.4)$}\label{fig:cor-20}
\end{figure}

\begin{figure}[H]
\centering
\includegraphics[width=.75\textwidth]{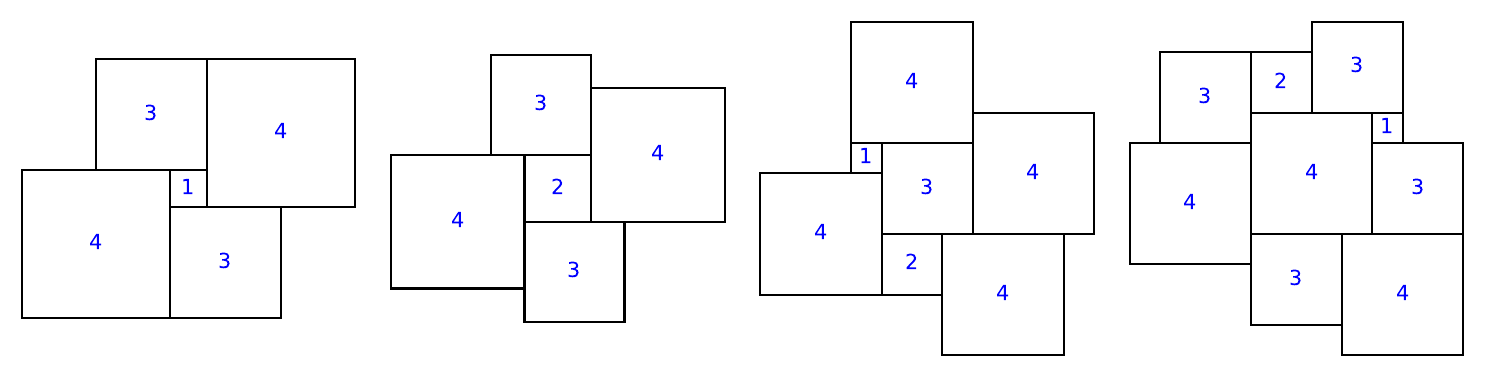}\caption{$(3^2;4^3;3^2;4^3 \,\, , \,\, 3^1;4^2;3^1;4^2 \,\, , \,\, 4^1;4^1;4^3.2;4^2.1 \,\, , \,\, 2^0.3;1^0.3;4^3.3;4^1.3)$}\label{fig:cor-21}
\end{figure}

\begin{figure}[H]
\centering
\includegraphics[width=.75\textwidth]{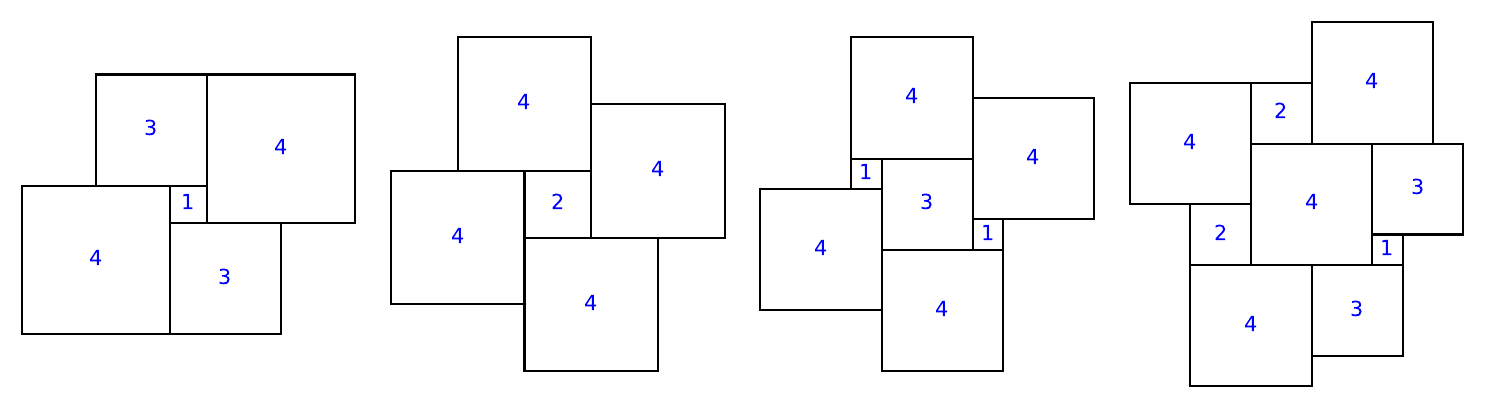}\caption{$(3^2;4^3;3^2;4^3 \,\, , \,\, 4^2;4^2;4^2;4^2 \,\, , \,\, 4^1;4^2.1;4^1;4^2.1 \,\, , \,\, 2^0.4;3^0.1;3^1.4;2^0.4)$}\label{fig:cor-22}
\end{figure}

\begin{figure}[H]
\centering
\includegraphics[width=.75\textwidth]{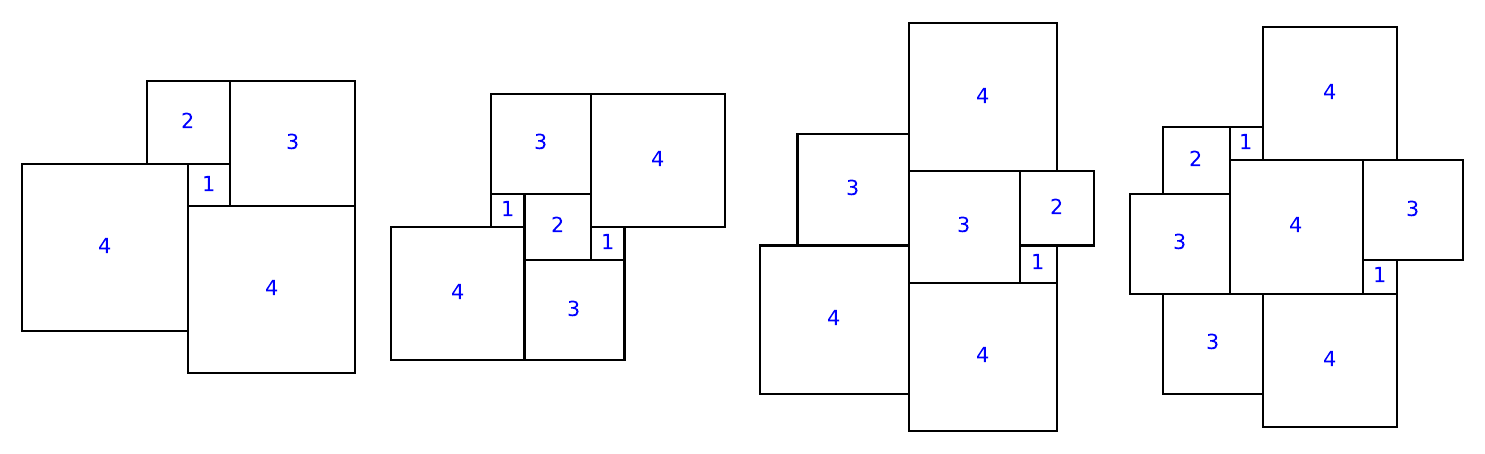}\caption{$(2^1;3^2;4^3;4^3 \,\, , \,\, 3^1;4^3.1;3^1;4^3.1 \,\, , \,\, 4^0;2^0.1;4^1;4^3.3 \,\, , \,\, 1^0.4;3^0.1;4^1.3;3^0.2)$}\label{fig:cor-23}
\end{figure}

\begin{figure}[H]
\centering
\includegraphics[width=.75\textwidth]{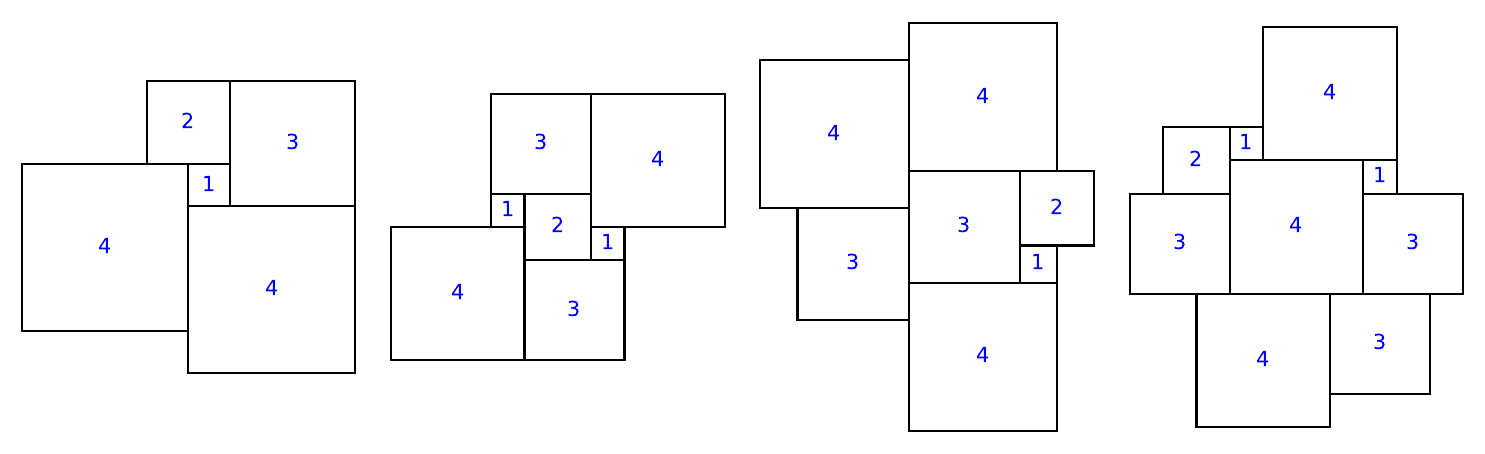}\caption{$(2^1;3^2;4^3;4^3 \,\, , \,\, 3^1;4^3.1;3^1;4^3.1 \,\, , \,\, 4^0;2^0.1;4^1;3^1.4 \,\, , \,\, 1^0.4;1^0.3;3^2.4;3^0.2)$}\label{fig:cor-24}
\end{figure}

\begin{figure}[H]
\centering
\includegraphics[width=.75\textwidth]{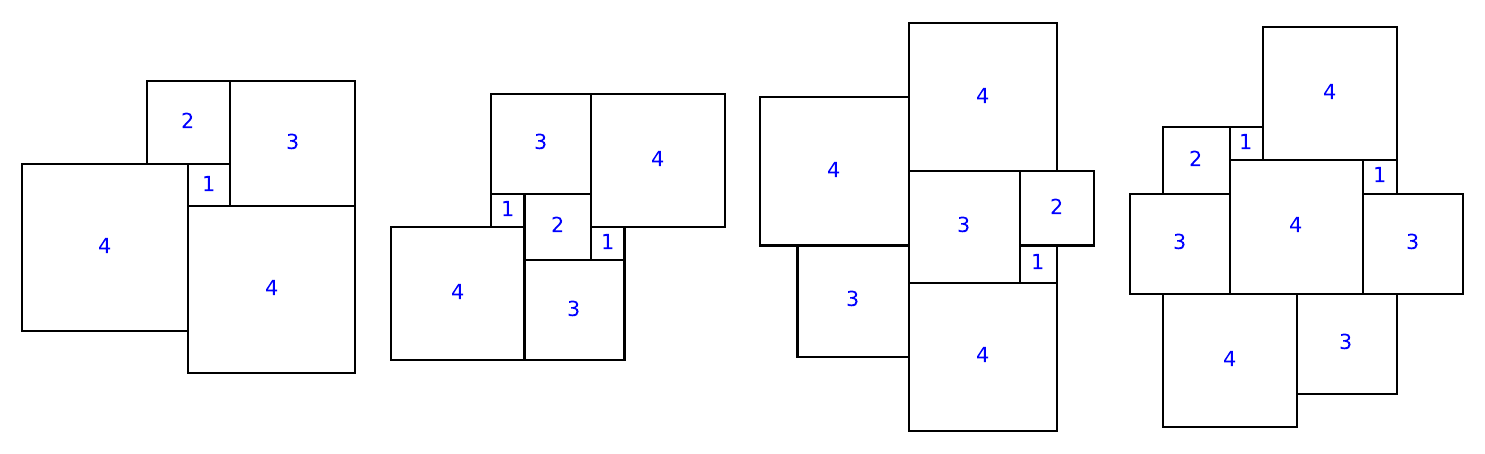}\caption{$(2^1;3^2;4^3;4^3 \,\, , \,\, 3^1;4^3.1;3^1;4^3.1 \,\, , \,\, 4^0;2^0.1;4^1;3^2.4 \,\, , \,\, 1^0.4;1^0.3;3^1.4;3^0.2)$}\label{fig:cor-25}
\end{figure}

\begin{figure}[H]
\centering
\includegraphics[width=.75\textwidth]{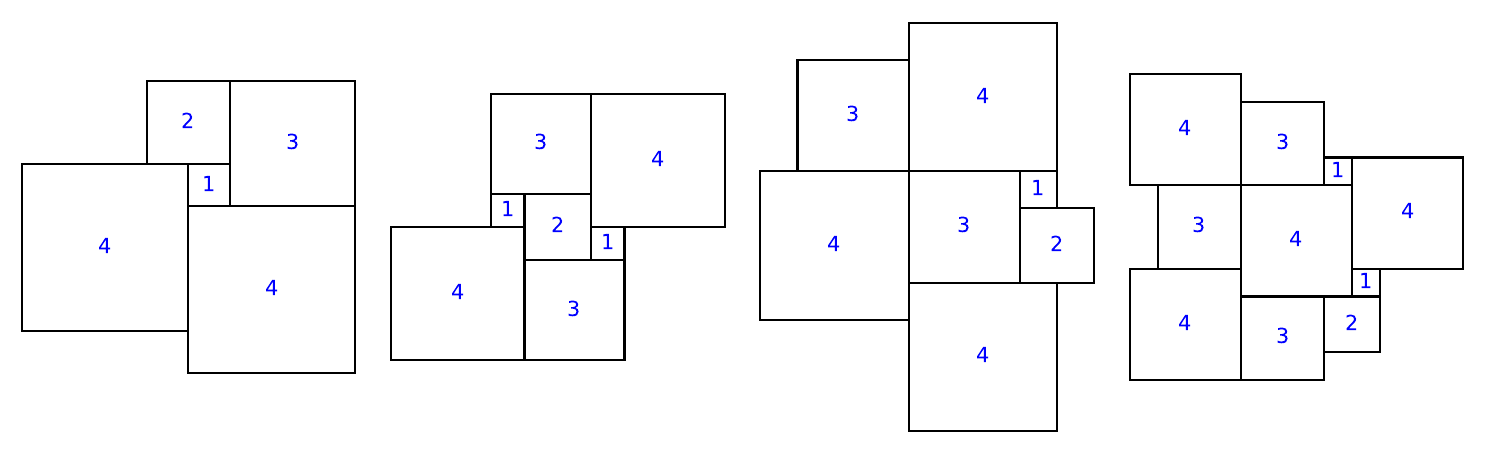}\caption{$(2^1;3^2;4^3;4^3 \,\, , \,\, 3^1;4^3.1;3^1;4^3.1 \,\, , \,\, 3^3.4;1^0.2;4^1;4^1 \,\, , \,\, 4^4.3.1;4^1.1;2^1.3;4^3.3)$}\label{fig:cor-26}
\end{figure}

\begin{figure}[H]
\centering
\includegraphics[width=.75\textwidth]{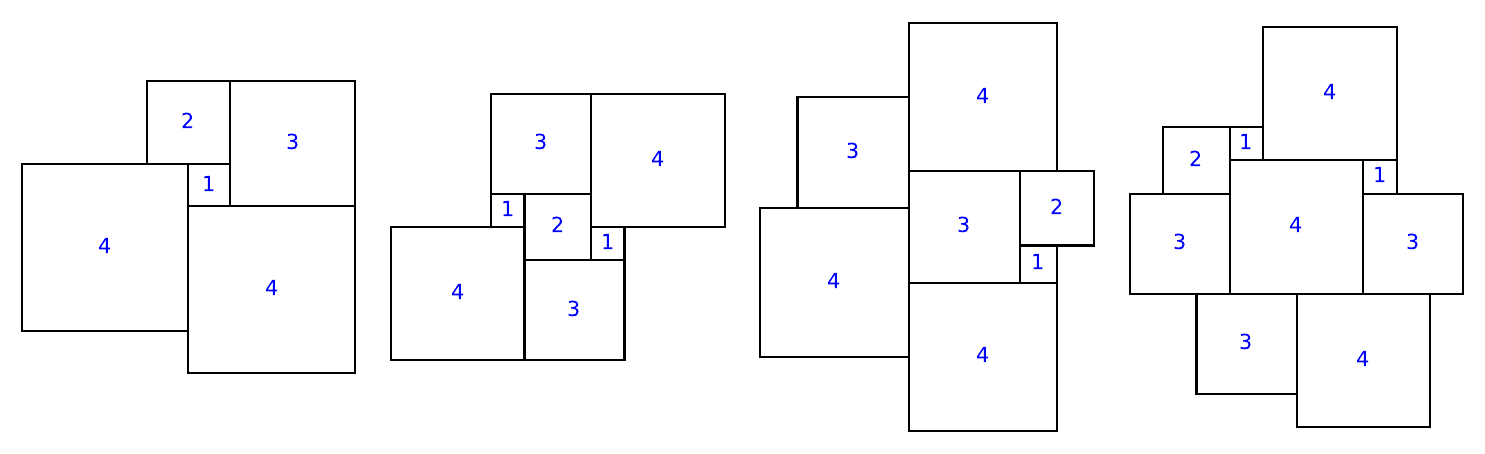}\caption{$(2^1;3^2;4^3;4^3 \,\, , \,\, 3^1;4^3.1;3^1;4^3.1 \,\, , \,\, 4^0;2^0.1;4^1;4^2.3 \,\, , \,\, 1^0.4;1^0.3;4^2.3;3^0.2)$}\label{fig:cor-27}
\end{figure}

\begin{figure}[H]
\centering
\includegraphics[width=.75\textwidth]{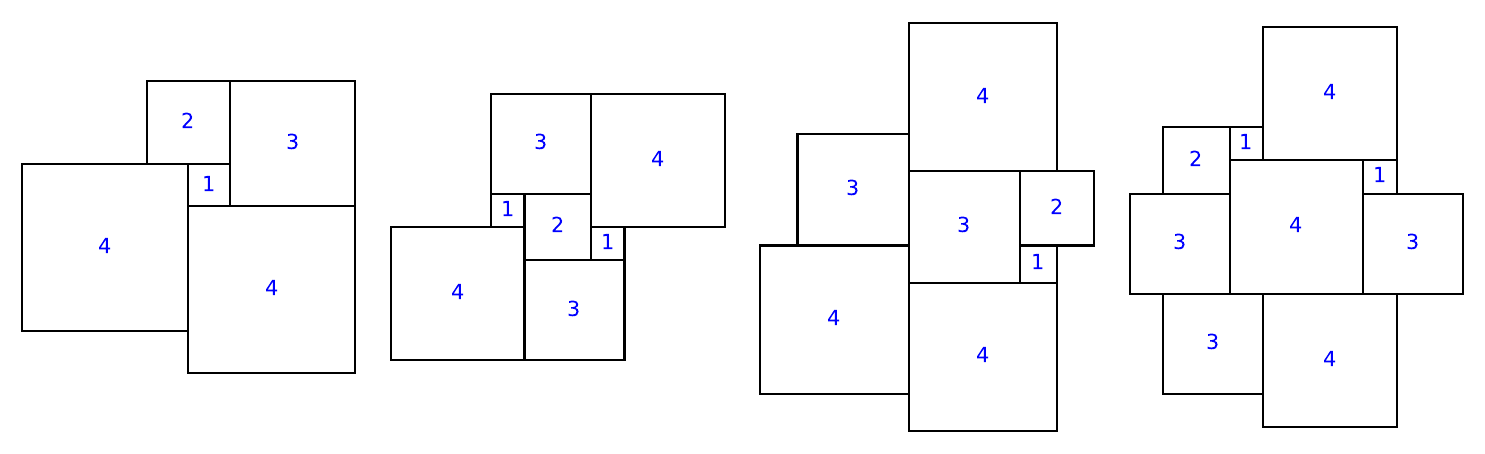}\caption{$(2^1;3^2;4^3;4^3 \,\, , \,\, 3^1;4^3.1;3^1;4^3.1 \,\, , \,\, 4^0;2^0.1;4^1;4^3.3 \,\, , \,\, 1^0.4;1^0.3;4^1.3;3^0.2)$}\label{fig:cor-28}
\end{figure}

\begin{figure}[H]
\centering
\includegraphics[width=.75\textwidth]{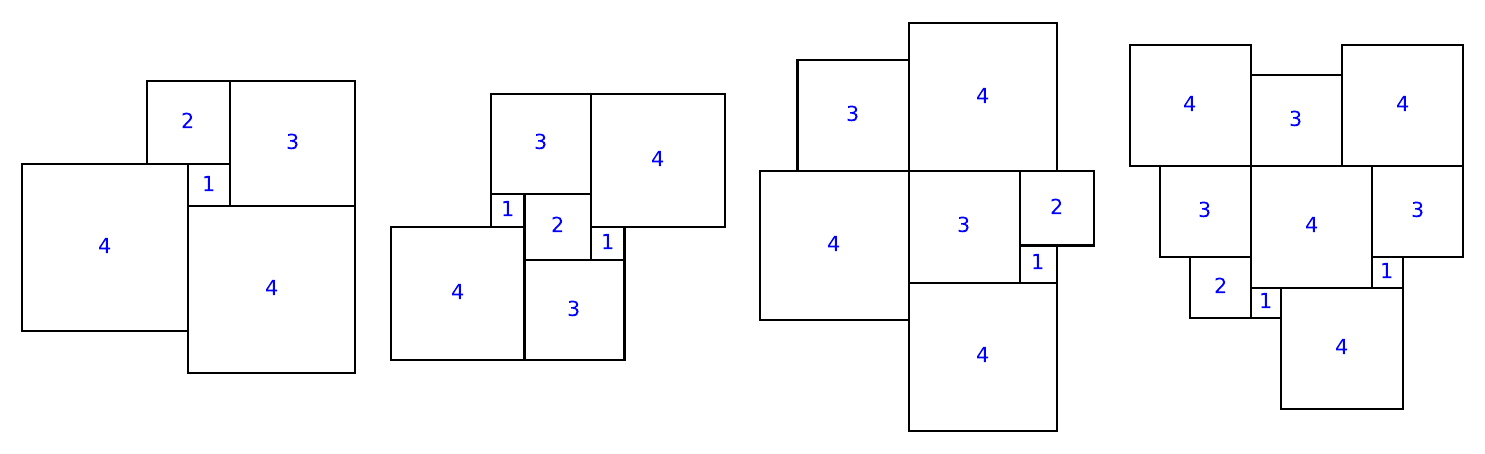}\caption{$(2^1;3^2;4^3;4^3 \,\, , \,\, 3^1;4^3.1;3^1;4^3.1 \,\, , \,\, 3^3.4;2^0.1;4^1;4^1 \,\, , \,\, 4^4.3.4;3^0.1;4^1.1;2^1.3)$}\label{fig:cor-29}
\end{figure}

\pagebreak

\section{Code}\label{apx:code}

The code is downloadable (on arXiv) as a single Jupyter (.ipynb) notebook. All of the code was executed using the SageMath 10.6.beta3 kernel.

\subsection{Side Generators}\label{apx:sides}\mbox{}\\

\begin{lstlisting}[caption={Side type 1 generator}, label={prog:side-1}, basicstyle=\footnotesize\ttfamily]
import itertools as it
import time

def Side_Type_i_Generator(Center_size,max_size):

    C = Center_size
    m = max_size

    #Using integer partitions to create a list of all potential sides of the form [0,[a0,a1,...,ak]]
    AllPotentialSides = []
    for n in range(C+1,C+m):
        
        #Generate all partitions of n using integers 1, 2, ..., m
        P = Partitions(n,max_part = m).list()
        
        #Creating all permutations of the integer partitions
        for BasePart in P:
            MoreParts = Permutations(BasePart) 
            for x in MoreParts:
                AllPotentialSides.append([0,list(x)]) #the exponent is 0 for type 1 sides
                
       
    # Filtered Sides is intialized as a copy of AllPotentialSides.
    # We will remove sides from FilteredSides when the are caught by a filter.
    FilteredSides = []
    for i in AllPotentialSides:
        FilteredSides.append(i)
    
    for L in AllPotentialSides:

        #Remove any instances where a0 = C
        if L[1][0] == C:
            FilteredSides.remove(L)
            continue
            
        #Remove any instances where L has ai = a(i+1) for some i = 0, ..., n-2
        flag = False
        for i in range(len(L[1])-1):
            if L[1][i] == L[1][i+1]:
                flag = True
                break
        if flag == True:
            FilteredSides.remove(L)
            continue

        #Remove any instances where L contains "a,1,b" or "a,2,b" since any such instance
        #violates the vortex condition for size-1 and size-2 squares
        flag = False
        for i in range(1,len(L[1])-1):
            if (L[1][i] == 1) or (L[1][i] == 2):
                flag = True
                break
        if flag == True:
            FilteredSides.remove(L)
            continue

        # The afterhang y must satisfy 0 < y < ak (where ak is the last square)
        afterhang = sum(L[1][i] for i in range(len(L[1]))) - C
        last_square = L[1][-1]
        if afterhang <= 0 or afterhang >= last_square:
            FilteredSides.remove(L)
            continue
    
    return(FilteredSides)
\end{lstlisting}

\begin{lstlisting}[caption={Side type 2 generator}, label={prog:side-2}, basicstyle=\footnotesize\ttfamily]
def Side_Type_ii_Generator(Center_size,max_size):

    C = Center_size
    m = max_size

    # Using integer partitions to create a list of all potential sides of the form [e,[a0,a1,...,ak]]
    # The partition constraint tells us to find partitions of C+1, C+2, ..., and (C + m)
    AllPotentialSides = []
    for n in range(C+1,C+m+1):
        
        #Generate all partitions of n using integers 1, 2, ..., m
        P = Partitions(n,max_part = m).list()
        
        #Creating all permutations of the integer partitions
        for BasePart in P:
            MoreParts = Permutations(BasePart) 
            
            # Creating potential sides by adding eligible exponents
            for x in MoreParts:
                if n-C <= x[0]:
                    AllPotentialSides.append([n-C,list(x)])
       
    # Filtered Sides is intialized as a copy of AllPotentialSides.
    # We will remove sides from FilteredSides when the are caught by a filter.
    
    FilteredSides = []
    for i in AllPotentialSides:
        FilteredSides.append(i)
    
    for L in AllPotentialSides:
    
        #Remove any instances where L has ai = a(i+1) for some i = 0, ..., n-2
        flag = False
        for i in range(len(L[1])-1):
            if L[1][i] == L[1][i+1]:
                flag = True
                break
        if flag == True:
            FilteredSides.remove(L)
            continue

        #Remove any instances where L contains "a,1,b" or "a,2,b" since any such instance
        #violates the vortex condition for size-1 and size-2 squares
        flag = False
        for i in range(1,len(L[1])-1):
            if (L[1][i] == 1) or (L[1][i] == 2):
                flag = True
                break
        if flag == True:
            FilteredSides.remove(L)
            continue

        #Remove all double corners of size 1 or 2:
        if (L[0] == 1 and L[1][0] == 1) or (L[0] == 2 and L[1][0] == 2):
            FilteredSides.remove(L)
            continue

        #Remove any instances in which a0 = e and a1 = C (violates unilaterality)
        if len(L[1]) > 1:
            if L[0] == L[1][0] and L[1][1] == C:
                FilteredSides.remove(L)
                continue
    
    return(FilteredSides)
\end{lstlisting}

\begin{lstlisting}[caption={Side type 3 generator}, label={prog:side-3}, basicstyle=\footnotesize\ttfamily]
def Side_Type_iii_Generator(Center_size,max_size):

    C = Center_size
    m = max_size

    # Using integer partitions to create a list of all potential sides of the form [e,[a0,a1,...,ak]]
    # The partition constraint tells us to find partitions of C.
    AllPotentialSides = []
           
    #Generate all partitions of C using integers 1, 2, ..., m
    P = Partitions(C,max_part = m).list()
        
    #Creating all permutations of the integer partitions
    for BasePart in P:
        MoreParts = Permutations(BasePart) 
        # Adding exponents
        for x in MoreParts:
            # The exponent constraint is e = 0
            AllPotentialSides.append([0,list(x)])
                
    # Filtered Sides is intialized as a copy of AllPotentialSides.
    # We will remove sides from FilteredSides when they are caught by a filter.
    FilteredSides = []
    for i in AllPotentialSides:
        FilteredSides.append(i)
    
    for L in AllPotentialSides:

        #Remove any instances where a0 = C or ak = C
        if L[1][0] == C or L[1][-1] == C:
            FilteredSides.remove(L)
            continue
            
        #Remove any instances where L has ai = a(i+1) for some i = 0, ..., n-2
        flag = False
        for i in range(len(L[1])-1):
            if L[1][i] == L[1][i+1]:
                flag = True
                break
        if flag == True:
            FilteredSides.remove(L)
            continue

        #Remove any instances where L contains "a,1,b" or "a,2,b" since any such instance
        #violates the vortex condition for size-1 and size-2 squares
        flag = False
        for i in range(1,len(L[1])-1):
            if (L[1][i] == 1) or (L[1][i] == 2):
                flag = True
                break
        if flag == True:
            FilteredSides.remove(L)
            continue
    
    return(FilteredSides)
\end{lstlisting}

\begin{lstlisting}[caption={Side type 4 generator}, label={prog:side-4}, basicstyle=\footnotesize\ttfamily]
def Side_Type_iv_Generator(Center_size,max_size):

    C = Center_size
    m = max_size

    # Using integer partitions to create a list of all potential sides of the form [e,[a0,a1,...,ak]]
    # The partition constraint tells us to find partitions of C+2, C+3, ..., and (C + 2m - 1)
    AllPotentialSides = []
    for n in range(C + 2 , C + 2*m):
        
        #Generate all partitions of n using integers 1, 2, ..., m
        P = Partitions(n,max_part = m).list()
        
        #Creating all permutations of the integer partitions
        for BasePart in P:
            MoreParts = Permutations(BasePart) 
            
            # Adding integer exponents
            for x in MoreParts:
                # The exponent constraint is 0 < e <= a0 and e is an integer
                a0 = x[0]
                for e in range(1,a0+1):
                    AllPotentialSides.append([e,list(x)])
                
    # Filtered Sides is intialized as a copy of AllPotentialSides.
    # We will remove sides from FilteredSides when the are caught by a filter.
    FilteredSides = []
    for i in AllPotentialSides:
        FilteredSides.append(i)
    
    for L in AllPotentialSides:

        #Remove any instances where L has ai = a(i+1) for some i = 0, ..., n-2
        flag = False
        for i in range(len(L[1])-1):
            if L[1][i] == L[1][i+1]:
                flag = True
                break
        if flag == True:
            FilteredSides.remove(L)
            continue

        #Remove any instances where L contains "a,1,b" or "a,2,b" since any such instance
        #violates the vortex condition for size-1 and size-2 squares
        flag = False
        for i in range(1,len(L[1])-1):
            if (L[1][i] == 1) or (L[1][i] == 2):
                flag = True
                break
        if flag == True:
            FilteredSides.remove(L)
            continue

        #Remove all double corners of size 1 or 2:
        if (L[0] == 1 and L[1][0] == 1) or (L[0] == 2 and L[1][0] == 2):
            FilteredSides.remove(L)
            continue

        #Remove any instances in which a0 = e and a1 = C (violates unilaterality)
        if len(L[1]) > 1:
            if L[0] == L[1][0] and L[1][1] == C:
                FilteredSides.remove(L)
                continue

        # The afterhang y must satisfy 0 < y < ak (where ak is the last square)
        afterhang = sum(L[1][i] for i in range(len(L[1]))) - L[0] - C
        last_square = L[1][-1]
        if afterhang <= 0 or afterhang >= last_square:
            FilteredSides.remove(L)
            continue
    
    return(FilteredSides)
\end{lstlisting}

\subsection{Corona Filters}\label{apx:stitching}\mbox{}\\

\begin{lstlisting}[caption={Check for Unilaterality Violation When Combining Sides}, label={prog:uni-glue}, basicstyle=\footnotesize\ttfamily]
def Uni_Glue(Cor):

# Uni_Glue checks if unilaterality is violated where the end square of one side meets the beginning square of the next side
# Note: Such a unilaterality violation can only occur at double corners.
# If the coronal violates unilaterality at one of these places, Uni_Glue returns True; otherwise False.

    flag = False
    if (Cor[0][1][-1] == Cor[1][1][0] and Cor[1][0] == Cor[1][1][0]) or (Cor[1][1][-1] == Cor[2][1][0] and Cor[2][0] == Cor[2][1][0]) or (Cor[2][1][-1] == Cor[3][1][0] and Cor[3][0] == Cor[3][1][0]) or  (Cor[3][1][-1] == Cor[0][1][0] and Cor[0][0] == Cor[0][1][0]):
        flag = True
    return(flag)  
\end{lstlisting}

\begin{lstlisting}[caption={Check for Vortex Violations of Size 1 and 2 Squares}, label={prog:vortex-overhang}, basicstyle=\footnotesize\ttfamily]
def Vortex_Overhang(Cor,center_size):

# This filter will make sure that when there is a 1-square or 2-square (which are vortices)
# at the beginning or end of a side, the vortex condition is not violated
# where the 1-square or 2-square meets the tile on the preceeding/next side.
# Returns True if there are any violations, False otherwise



    C = center_size
    
    for i in range(4):
        
        cur_side = Cor[i]
        prev_side = Cor[i-1]
        next_side = Cor[(i+1)%4]

        prev_side_length = len(prev_side[1])
        cur_side_length = len(cur_side[1])

        #Checking Prehang for vortex violation of size 1 or 2 squares:

        first_square = cur_side[1][0]
        prev_square = prev_side[1][prev_side_length - 1]
        prehang = cur_side[0]
        
        if (prev_square == 1 and prehang > 1):
            return(True)
        if (prev_square == 2 and prehang > 2):
            return(True)
        if (C != 1 and prehang == 1 and first_square == 2 and prev_square != 1):
            return(True)

        # Checking Afterhang for vortex violation of size 1 or 2 squares:

        last_square = cur_side[1][cur_side_length - 1]
        next_square = next_side[1][0]
        afterhang = sum(cur_side[1][j] for j in range(0,cur_side_length)) - C - prehang
        
        if (next_square == 1 and afterhang > 1):
            return(True)
        if (next_square == 2 and afterhang > 2):
            return(True)
        if (C != 1 and afterhang == 1 and last_square == 2 and next_square != 1):
            return(True)
    
    return(False) 
\end{lstlisting}

\begin{lstlisting}[caption={Symmetry Filters}, label={prog:symmetry}, basicstyle=\footnotesize\ttfamily]
def Rot_90_Filter(Corona1,Corona2):
    C1 = list(Corona1)
    C2 = list(Corona2)

    if C1[0] == C2[1] and C1[1] == C2[2] and C1[2] == C2[3] and C1[3] == C2[0]:
        return True
    else:
        return False

def Rot_180_Filter(Corona1,Corona2):
    C1 = list(Corona1)
    C2 = list(Corona2)

    if C1[0] == C2[2] and C1[1] == C2[3] and C1[2] == C2[0] and C1[3] == C2[1]:
        return True
    else:
        return False

def Rot_270_Filter(Corona1,Corona2):
    C1 = list(Corona1)
    C2 = list(Corona2)

    if C1[0] == C2[3] and C1[1] == C2[0] and C1[2] == C2[1] and C1[3] == C2[2]:
        return True
    else:
        return False

def Refl_V_Filter(Corona1,Corona2,center_size):
    C1 = list(Corona1)
    C2 = list(Corona2)
    C = center_size

    C1_top_afterhang = sum(C1[0][1][j] for j in range(0,len(C1[0][1]))) - C - C1[0][0]
    C1_bot_afterhang = sum(C1[2][1][j] for j in range(0,len(C1[2][1]))) - C - C1[2][0]

    #Creating a copy of Corona1 flipped over the vertical
    C1_flip = [[C1_top_afterhang,list(reversed(C1[0][1]))],[0,list(reversed(C1[3][1]))],[C1_bot_afterhang,list(reversed(C1[2][1]))],[0,list(reversed(C1[1][1]))]]
    
    if C1_flip == C2:
        return True
    else:
        return False  

def Refl_H_Filter(Corona1,Corona2,center_size):
    C1 = list(Corona1)
    C2 = list(Corona2)
    C = center_size

    C1_top_afterhang = sum(C1[0][1][j] for j in range(0,len(C1[0][1]))) - C - C1[0][0]
    C1_left_afterhang = sum(C1[3][1][j] for j in range(0,len(C1[3][1]))) - C - C1[3][0]

    #Creating a copy of Corona1 flipped over the horizontal
    C1_flip = [[0,list(reversed(C1[2][1]))],[0,list(reversed(C1[1][1]))],[C1_top_afterhang,list(reversed(C1[0][1]))],[C1_left_afterhang,list(reversed(C1[3][1]))]]
    
    if C1_flip == C2:
        return True
    else:
        return False  

# Check if Corona2 is really just Corona 1 reflected over the diagonal of negative slope
# This filter only applies to Archetype 1 and only occurs when n > 4

def Refl_D_Filter(Corona1,Corona2,center_size):
    C1 = list(Corona1)
    C2 = list(Corona2)
    C = center_size

    C1_bot_afterhang = sum(C1[2][1][j] for j in range(0,len(C1[2][1]))) - C - C1[2][0]

    new_top = list(reversed(C1[3][1]))
    new_top.insert(0,C1[0][0])

    new_right = list(reversed(C1[2][1]))
    new_right.pop()

    new_bot = list(reversed(C1[1][1]))
    new_bot.insert(0,C1[2][0])

    new_left = list(reversed(C1[0][1]))
    new_left.pop()

    C1_flip = [[C1[0][0],new_top],[C1_bot_afterhang,new_right],[C1[2][0],new_bot],[0,new_left]]
    
    if C1_flip == C2:
        return True
    else:
        return False  

\end{lstlisting}

\subsection{Corona Generation}\label{apx:corona_gen}\mbox{}\\

\begin{lstlisting}[caption={Generating Coronas and Applying Corona Filters}, label={prog:corona_gen}, basicstyle=\footnotesize\ttfamily]
def Corona_Generator(Center_size,max_size):

    C = Center_size
    m = max_size

    # Creating all side types (i - iv)
    S1 = Side_Type_i_Generator(C,m)
    S2 = Side_Type_ii_Generator(C,m)
    S3 = Side_Type_iii_Generator(C,m)
    S4 = Side_Type_iv_Generator(C,m)

    # Separating each side types into those whose leading square creates
    # a double corner and those that do not
    S1_Double = []
    S1_Single = []
    S2_Double = []
    S2_Single = []
    S3_Double = []
    S3_Single = []
    S4_Double = []
    S4_Single = []

    for S in S1:
        if S[0] == S[1][0]:
            S1_Double.append(S)
        else:
            S1_Single.append(S)

    for S in S2:
        if S[0] == S[1][0]:
            S2_Double.append(S)
        else:
            S2_Single.append(S)

    for S in S3:
        if S[0] == S[1][0]:
            S3_Double.append(S)
        else:
            S3_Single.append(S)

    for S in S4:
        if S[0] == S[1][0]:
            S4_Double.append(S)
        else:
            S4_Single.append(S)
    
    
    # Type 0: (iv*,iii,iv*,iii)
    T0_Coronas_All = list(it.product(S4_Double,S3_Single,S4_Double,S3_Single))
    T0_Coronas_Filtered = []
    
    for Cor in T0_Coronas_All:

        if Uni_Glue(Cor) == True:
            continue
            
        if Vortex_Overhang(Cor,C) == True:
            continue

        T0_Coronas_Filtered.append(list(Cor))

    #Getting rid of 180-degree rotationally symmetric copies
    T0_Rot180_Filtered = []
    for Cor in T0_Coronas_Filtered:
        T0_Rot180_Filtered.append(Cor)
    
    for i in range(len(T0_Coronas_Filtered)):
        for j in range(i+1,len(T0_Coronas_Filtered)):
            if Rot_180_Filter(T0_Coronas_Filtered[i],T0_Coronas_Filtered[j]) == True:
                T0_Rot180_Filtered.remove(T0_Coronas_Filtered[j])
        
 
    # Type 1: (ii*,ii,iv*,iii)
    T1_Coronas_All = list(it.product(S2_Double,S2_Single,S4_Double,S3_Single))
    T1_Coronas_Filtered = []
    
    for Cor in T1_Coronas_All:
        if Uni_Glue(Cor) == True:
            continue
        if Vortex_Overhang(Cor,C) == True:
            continue

        T1_Coronas_Filtered.append(list(Cor))

    #Getting rid of coronas that are reflections over the diagonal of negative slope of a previous corona
    T1_Refl_D_Filtered = list(T1_Coronas_Filtered)
    
    for i in range(len(T1_Coronas_Filtered)):
        for j in range(i+1,len(T1_Coronas_Filtered)):
            if Refl_D_Filter(T1_Coronas_Filtered[i],T1_Coronas_Filtered[j],C) == True:
                T1_Refl_D_Filtered.remove(T1_Coronas_Filtered[j])
            
 
    # Type 2: (ii*,ii,ii,ii)
    T2_Coronas_All = list(it.product(S2_Double,S2_Single,S2_Single,S2_Single))
    T2_Coronas_Filtered = []
    
    for Cor in T2_Coronas_All:
        if Uni_Glue(Cor) == True:
            continue
        if Vortex_Overhang(Cor,C) == True:
            continue

        T2_Coronas_Filtered.append(list(Cor))
 
    
    # Type 3: (iv*,iii,ii,ii)
    T3_Coronas_All = list(it.product(S4_Double,S3_Single,S2_Single,S2_Single))
    T3_Coronas_Filtered = []
    
    for Cor in T3_Coronas_All:
        if Uni_Glue(Cor) == True:
            continue
        if Vortex_Overhang(Cor,C) == True:
            continue

        T3_Coronas_Filtered.append(list(Cor))

    
    # Type 4: (ii*,iv,iii,ii)
    T4_Coronas_All = list(it.product(S2_Double,S4_Single,S3_Single,S2_Single))
    T4_Coronas_Filtered = []
    
    for Cor in T4_Coronas_All:
        if Uni_Glue(Cor) == True:
            continue
        if Vortex_Overhang(Cor,C) == True:
            continue

        T4_Coronas_Filtered.append(list(Cor))
 
    
    # Type 5: (ii*,ii,iv,iii)
    T5_Coronas_All = list(it.product(S2_Double,S2_Single,S4_Single,S3_Single))
    T5_Coronas_Filtered = []
    
    for Cor in T5_Coronas_All:
        if Uni_Glue(Cor) == True:
            continue
        if Vortex_Overhang(Cor,C) == True:
            continue

        T5_Coronas_Filtered.append(list(Cor))
    
    #-----------------------

    
    # Type 6: (ii,ii,ii,ii)
    T6_Coronas_All = list(it.product(S2_Single,S2_Single,S2_Single,S2_Single))
    T6_Coronas_Filtered = []

    for Cor in T6_Coronas_All:
        if Vortex_Overhang(Cor,C) == True:
            continue

        T6_Coronas_Filtered.append(list(Cor))

    #Getting rid of all rotationally symmetric copies
    T6_Rot90_Filtered = list(T6_Coronas_Filtered)
    
    for i in range(len(T6_Coronas_Filtered)):
        for j in range(i+1,len(T6_Coronas_Filtered)):
            if Rot_90_Filter(T6_Coronas_Filtered[i],T6_Coronas_Filtered[j]):
                T6_Rot90_Filtered.remove(T6_Coronas_Filtered[j])

    T6_Rot180_Filtered = list(T6_Rot90_Filtered)

    for i in range(len(T6_Rot90_Filtered)):
        for j in range(i+1,len(T6_Rot90_Filtered)):
            if Rot_180_Filter(T6_Rot90_Filtered[i],T6_Rot90_Filtered[j]):
                T6_Rot180_Filtered.remove(T6_Rot90_Filtered[j])

    T6_Rot270_Filtered = list(T6_Rot180_Filtered)

    for i in range(len(T6_Rot180_Filtered)):
        for j in range(i+1,len(T6_Rot180_Filtered)):
            if Rot_270_Filter(T6_Rot180_Filtered[i],T6_Rot180_Filtered[j]):
                T6_Rot270_Filtered.remove(T6_Rot180_Filtered[j])
 
    
    # Type 7: (i,iii,iv,i)
    T7_Coronas_All = list(it.product(S1_Single,S3_Single,S4_Single,S1_Single))
    T7_Coronas_Filtered = []

    for Cor in T7_Coronas_All:
        if Vortex_Overhang(Cor,C) == True:
            continue

        T7_Coronas_Filtered.append(list(Cor))
   
    
    # Type 8: (i,iii,ii,iv)
    T8_Coronas_All = list(it.product(S1_Single,S3_Single,S2_Single,S4_Single))
    T8_Coronas_Filtered = []

    for Cor in T8_Coronas_All:
        if Vortex_Overhang(Cor,C) == True:
            continue

        T8_Coronas_Filtered.append(list(Cor))

    #Getting rid of coronas that are reflections over the horizontal of a previous corona
    T8_Refl_H_Filtered = list(T8_Coronas_Filtered)
        
    for i in range(len(T8_Coronas_Filtered)):
        for j in range(i+1,len(T8_Coronas_Filtered)):
            if Refl_H_Filter(T8_Coronas_Filtered[i],T8_Coronas_Filtered[j],C) == True:
                T8_Refl_H_Filtered.remove(T8_Coronas_Filtered[j])
 
    
    # Type 9: (iv,iii,iv,iii)
    T9_Coronas_All = list(it.product(S4_Single,S3_Single,S4_Single,S3_Single))
    T9_Coronas_Filtered = []

    for Cor in T9_Coronas_All:
        if Vortex_Overhang(Cor,C) == True:
            continue

        T9_Coronas_Filtered.append(list(Cor))
        

    #Getting rid of coronas that are reflections over the vertical and 180-degree rotatations of a previous corona
    T9_Refl_V_Filtered = list(T9_Coronas_Filtered)
    
    for i in range(len(T9_Coronas_Filtered)):
        for j in range(i+1,len(T9_Coronas_Filtered)):
            if Refl_V_Filter(T9_Coronas_Filtered[i],T9_Coronas_Filtered[j],C) == True:
                T9_Refl_V_Filtered.remove(T9_Coronas_Filtered[j])
    T9_Rot180_Filtered = list(T9_Refl_V_Filtered)

    for i in range(len(T9_Refl_V_Filtered)):
        for j in range(i+1,len(T9_Refl_V_Filtered)):
            if Rot_180_Filter(T9_Refl_V_Filtered[i],T9_Refl_V_Filtered[j]) == True:
                T9_Rot180_Filtered.remove(T9_Refl_V_Filtered[j])

    All_Coronas = [T0_Rot180_Filtered,T1_Refl_D_Filtered,T2_Coronas_Filtered,T3_Coronas_Filtered,T4_Coronas_Filtered,T5_Coronas_Filtered,T6_Rot270_Filtered,T7_Coronas_Filtered,T8_Refl_H_Filtered,T9_Rot180_Filtered]

    # Adding the center size and archetype information to each corona.
    # This will be handy later that information is always on hand.
    # The data type for a corona will be: [ center_size, archetype, [top_side, right_side, bottom_side, left_side] ]
    # All_Coronas_Amended should be lengthed if Archetypes 10-14 are added later
    All_Coronas_Amended = [[],[],[],[],[],[],[],[],[],[]]
    
    for i in range(len(All_Coronas)):
        for j in All_Coronas[i]:
            All_Coronas_Amended[i].append([C,i,j])
    
    return(All_Coronas_Amended)



#Producing all filtered 1-, 2-, 3-, and 4-coronas of Types 1-10:

max_size = 4

Cors1 = Corona_Generator(1,max_size)
FlatCors1 = Cors1[0]+Cors1[1]+Cors1[2]+Cors1[3]+Cors1[4]+Cors1[5]+Cors1[6]+Cors1[7]+Cors1[8]+Cors1[9]

Cors2 = Corona_Generator(2,max_size)
FlatCors2 = Cors2[0]+Cors2[1]+Cors2[2]+Cors2[3]+Cors2[4]+Cors2[5]+Cors2[6]+Cors2[7]+Cors2[8]+Cors2[9]

Cors3 = Corona_Generator(3,max_size)
FlatCors3 = Cors3[0]+Cors3[1]+Cors3[2]+Cors3[3]+Cors3[4]+Cors3[5]+Cors3[6]+Cors3[7]+Cors3[8]+Cors3[9]

Cors4 = Corona_Generator(4,max_size)
FlatCors4 = Cors4[0]+Cors4[1]+Cors4[2]+Cors4[3]+Cors4[4]+Cors4[5]+Cors4[6]+Cors4[7]+Cors4[8]+Cors4[9]

All_Corona_4_Tups = list(it.product(FlatCors1,FlatCors2,FlatCors3,FlatCors4))

\end{lstlisting}

\pagebreak

\subsection{Corona Compatibility Checking}\label{apx:compat}\mbox{}\\

\begin{lstlisting}[caption={Corona Compatibibility Checking}, label={prog:compat}, basicstyle=\footnotesize\ttfamily]
def Geom_Compat_Check(corA,corB):
    

    # We are checking if CorA and CorB are compatible using a more strengent check on the geometric arrangments
    # of neighbors of squares.

    # We allow that corA = corB.
    
    a = corA[0]
    b = corB[0]
    
    TypeA = corA[1]
    TypeB = corB[1]
 
    # Precomputing some things that get used over and over in the loop to follow:
    # Finding number of squares on each side of corA and corB and storing in corresponding lists.
    # Also calculating the overhang of each side of corA and corB and storing in lists overhangsA and overhangsB.
    # Note: the prehang is just the exponent of each side which is the 0th entry of each side
    # Finding the positions of instances of "a" in corB and instances of "b" in corA
    
    corA_side_num = []
    corB_side_num = []  
    overhangsA = []
    overhangsB = []
    Pos_of_a_in_corB = []
    Pos_of_b_in_corA = []  
    
    for i in range(4):
        corA_side = corA[2][i][1]
        corB_side = corB[2][i][1]
        corA_side_num.append(len(corA_side))
        corB_side_num.append(len(corB_side))
        expA = corA[2][i][0]
        expB = corB[2][i][0]
        overhangA = sum(corA_side) - expA - a
        overhangB = sum(corB_side) - expB - b
        overhangsA.append(overhangA)
        overhangsB.append(overhangB)
        Pos_of_a_in_corB += [[i,j] for j, x in enumerate(corB_side) if x == a]
        Pos_of_b_in_corA += [[i,j] for j, x in enumerate(corA_side) if x == b]

    # Making a reflected copy of corA, corA_refl and storing its side_nums and overhangs
    corA_refl = [corA[0],corA[1]] 
    corA_refl_squares = []
    for i in range(4):
        corA_refl_squares.append([overhangsA[(-1*i) % 4],corA[2][(-1*i) % 4][1][::-1]])
    corA_refl.append(corA_refl_squares)

    # Adjusting reflected copy of corA to account for double corners
    for i in range(4):
        corA_squares = corA[2][i][1]
        expA = corA[2][i][0]
        if expA == corA_squares[0]:
            corA_refl[2][(i + 1) % 4][1].insert(0,expA)
            corA_refl[2][(i + 1) % 4][0] = expA
            L = len(corA_refl[2][i][1])
            del corA_refl[2][i][1][L - 1]

    corA_refl_side_num = []
    overhangsA_refl = []
    Pos_of_b_in_corA_refl = []
    for i in range(4):
        corA_refl_side = corA_refl[2][i][1]
        corA_refl_side_num.append(len(corA_refl_side))
        expA_refl = corA_refl[2][i][0]
        overhangA_refl = sum(corA_refl_side) - expA_refl - a
        overhangsA_refl.append(overhangA_refl)
        Pos_of_b_in_corA_refl += [[i,j] for j, x in enumerate(corA_refl_side) if x == b]
        
        
    # If one of Pos_of_a_in_corB or Pos_of_b_in_corA is empty but the other is not
    # then we know these two coronas are incompatible
    if (not Pos_of_a_in_corB and Pos_of_b_in_corA) or (Pos_of_a_in_corB and not Pos_of_b_in_corA):
        return(False)

    # If both Pos_of_a_in_corB and Pos_of_b_in_corA are empty, the two coronas are vacuously compatible, so return True
    if not Pos_of_a_in_corB and not Pos_of_b_in_corA:
        return(True)

    
    # Corona corA is the reference corona.
    # For each instance of "a" in CorB, we will check if the geometric
    # arrangement of the squares around "a" in CorB has a match in corA

    for q in Pos_of_a_in_corB:

        # Note: "q" is a 2-tuple [i,j] where i = the index of the side containing "a" and j = index of "a" on that side

        # Setting some variables based on corB
        cur_side = corB[2][q[0]][1] #Note - this is the list of squares w/o exponent
        cur_side_prehang = corB[2][q[0]][0]
        cur_side_overhang = overhangsB[q[0]]
        prev_side = corB[2][(q[0] - 1) % 4][1] #Note - this is the list of squares w/o exponent
        prev_side_prehang = corB[2][(q[0] - 1) % 4][0]
        prev_side_overhang = overhangsB[(q[0] - 1) % 4]
        next_side = corB[2][(q[0] + 1) % 4][1] #Note - this is the list of squares w/o exponent
        next_side_prehang = corB[2][(q[0] + 1) % 4][0]
        curL = corB_side_num[q[0]]
        prevL = corB_side_num[(q[0] - 1) % 4]
        nextL = corB_side_num[(q[0] + 1) % 4]


        # There are 4 major cases:
        # (1) "a" appears as the only square of cur_side
        # (2) "a" appears as a 1st but not only square of cur_side 
        # (3) "a" appears as a middle square of cur_side (so has and immediate predecessor and successor on curside)
        # (4) "a" appears as the last but not only square of cur_side (so is not 1st)
        
        # Case (1): "a" appears as the only square of cur_side
        if curL == 1:
                       
            # There are a few subcases to consider:
            # (i) "a" has prehang of 0 (and positive overhang)
            # (ii) "a" has positive prehang and 0 overhang with double corner at the end
            # (iii) "a" has positive prehang and 0 overhang but no double corner at the end
            # (iv) "a" has postive prehang and positive overhang.

            # Subcase (i)
            if cur_side_prehang == 0:

                n = next_side[0]
                p = prev_side[prevL - 1]
               
                # in corA, we must find all instances where "b" is the last square
                # of a side. If no such instances exist, the coronas are incompatible
                # and we may return False.

                pos_of_b_in_corA = []
                pos_of_b_in_corA_refl = []
                for i in range(4):
                    corA_side = corA[2][i][1]
                    corA_refl_side = corA_refl[2][i][1]
                    pos_of_b_in_corA += [[i,j] for j, x in enumerate(corA_side) if x == b and j == corA_side_num[i] - 1]
                    pos_of_b_in_corA_refl += [[i,j] for j, x in enumerate(corA_refl_side) if x == b and j == corA_refl_side_num[i] - 1]
                if not pos_of_b_in_corA and not pos_of_b_in_corA_refl:
                    return(False)

                # For each occurrence of "b" identified in pos_of_b_in_corA, we need that each of the following are satisified.
                # If it never happens that each are satisifed, corA and corB are incompatible so return False
                # (1) The overhang of the side on which "b" lies must be 0.
                # (2) The number of squares on this side must be greater than 1
                # (3) "n" must be the next to last square of the side on which "b" lies
                # (4) "p" must be the first square of the next side and that side must have postive prehang
                # (5) The overlap of "p" and "a" should be equal in both corA and corB

                found_last_b_after_n_in_corA = []
                for i in pos_of_b_in_corA:
                    # Checking (1) and (2)
                    if overhangsA[i[0]] == 0 and corA_side_num[i[0]] > 1:
                        side_of_b_in_corA = corA[2][i[0]]
                        side_after_b_in_corA = corA[2][(i[0] + 1) % 4]
                        #Checking (3) and (4)
                        if side_of_b_in_corA[1][corA_side_num[i[0]] - 2] == n and side_after_b_in_corA[1][0] == p and side_after_b_in_corA[0] > 0:
                            overlap_of_p_and_a_in_corA = p - side_after_b_in_corA[0]
                            overlap_of_p_and_a_in_corB = prev_side_overhang
                            found_last_b_after_n_in_corA.append(i)
                            break
                if not found_last_b_after_n_in_corA:
                    for i in pos_of_b_in_corA_refl:
                        # Checking (1) and (2)
                        if overhangsA_refl[i[0]] == 0 and corA_refl_side_num[i[0]] > 1:
                            side_of_b_in_corA_refl = corA_refl[2][i[0]]
                            side_after_b_in_corA_refl = corA_refl[2][(i[0] + 1) % 4]
                            #Checking (3) and (4)
                            if side_of_b_in_corA_refl[1][corA_refl_side_num[i[0]] - 2] == n and side_after_b_in_corA_refl[1][0] == p and side_after_b_in_corA_refl[0] > 0:
                                overlap_of_p_and_a_in_corA_refl = p - side_after_b_in_corA_refl[0]
                                overlap_of_p_and_a_in_corB = prev_side_overhang
                                found_last_b_after_n_in_corA.append(i)
                                break

                if not found_last_b_after_n_in_corA:
                    #print("Returning False in Case 1(i)")
                    return(False)

            # Subcase (ii): "a" has positive prehang and 0 overhang with double corner at the end
                        
            if cur_side_prehang > 0 and cur_side_overhang == 0 and next_side_prehang == next_side[0]:
                
                n = next_side[0]
                #Note: Since next_side_prehang = next_side[0] we know len(next_side) > 1
                nn = next_side[1]
                p = prev_side[prevL - 1]

                # In this case, in corA we need to find that each of the following hold, otherwise
                # we will return False:
                # (1) "n" must be the last square of a side of corA with overhang equal to 0 on that side
                # (2) "nn" must be the first square of the next side with prehang equal to "nn" on that side
                # (3) "b" must be the second square of the next side
                # (4) "p" must be the third square of the next side

                # Finding all occurances of "n" in corA that are the last square of a side of corA
                pos_of_n_in_corA = []
                pos_of_n_in_corA_refl = []
                for i in range(4):
                    corA_side = corA[2][i][1]
                    corA_refl_side = corA_refl[2][i][1]
                    pos_of_n_in_corA += [[i,j] for j, x in enumerate(corA_side) if x == n and j == corA_side_num[i] - 1]   
                    pos_of_n_in_corA_refl += [[i,j] for j, x in enumerate(corA_refl_side) if x == n and j == corA_refl_side_num[i] - 1]  
                if not pos_of_n_in_corA and not pos_of_n_in_corA_refl:
                    return(False)

                found_next_nn_in_corA = []
                for i in pos_of_n_in_corA:
                    L = corA_side_num[(i[0] + 1) % 4]
                    if L > 2 and overhangsA[i[0]] == 0:
                        next_side_in_corA = corA[2][(i[0] + 1) % 4]
                        next_side_in_corA_prehang = next_side_in_corA[0]
                        if next_side_in_corA_prehang == nn and next_side_in_corA[1][0] == nn and next_side_in_corA[1][1] == b and next_side_in_corA[1][2] == p:
                            found_next_nn_in_corA.append(i)
                            break
                
                if not found_next_nn_in_corA:
                    for i in pos_of_n_in_corA_refl:
                        L = corA_refl_side_num[(i[0] + 1) % 4]
                        if L > 2 and overhangsA_refl[i[0]] == 0:
                            next_side_in_corA_refl = corA_refl[2][(i[0] + 1) % 4]
                            next_side_in_corA_refl_prehang = next_side_in_corA_refl[0]
                            if next_side_in_corA_refl_prehang == nn and next_side_in_corA_refl[1][0] == nn and next_side_in_corA_refl[1][1] == b and next_side_in_corA_refl[1][2] == p:
                                found_next_nn_in_corA.append(i)
                                break
                
                if not found_next_nn_in_corA:
                    return(False) 

            # Subcase (iii) "a" has positive prehang and 0 overhang but no double corner at the end
            
            if cur_side_prehang > 0 and cur_side_overhang == 0 and next_side_prehang < next_side[0]:
                n = next_side[0]
                p = prev_side[prevL - 1]

                # In this case, we must check that 
                # (1) "n" is the last square of at some side of corA with positive overhang
                # (2) "b" is the first square of the next side of corA with prehang = 0
                # (3) "p" is the next square on the same side as "b" in corA
                # (4) the prehang of "n" in corB equals the overlap of "n" and "a" in corA.
                # If no such "n" is identified, then corA and corB are not compatible - return False

                pos_of_n_in_corA = []
                pos_of_n_in_corA_refl = []
                for i in range(4):
                    corA_side = corA[2][i][1]
                    corA_refl_side = corA_refl[2][i][1]
                    pos_of_n_in_corA += [[i,j] for j, x in enumerate(corA_side) if x == n and j == corA_side_num[i] - 1]
                    pos_of_n_in_corA_refl += [[i,j] for j, x in enumerate(corA_refl_side) if x == n and j == corA_refl_side_num[i] - 1]  
                if not pos_of_n_in_corA and not pos_of_n_in_corA_refl:
                    return(False)

                found_b_after_n_in_corA = []
                for i in pos_of_n_in_corA:
                    next_side_after_n = corA[2][(i[0] + 1) % 4]
                    L = corA_side_num[(i[0] + 1) % 4]
                    next_side_after_n_prehang = next_side_after_n[0]
                    
                    # Checking conditions (1) - (4)
                    if L > 1 and next_side_after_n_prehang == 0:
                        prehang_of_n_in_corB = next_side_prehang
                        overlap_of_a_and_n_in_corA = n - overhangsA[i[0]]
                        if next_side_after_n[1][0] == b and next_side_after_n[1][1] == p and prehang_of_n_in_corB == overlap_of_a_and_n_in_corA:
                            found_b_after_n_in_corA.append(i)
                            break
                if not found_b_after_n_in_corA:
                    for i in pos_of_n_in_corA_refl:
                        next_side_after_n = corA_refl[2][(i[0] + 1) % 4]
                        L = corA_refl_side_num[(i[0] + 1) % 4]
                        next_side_after_n_prehang = next_side_after_n[0]
                        
                        # Checking conditions (1) - (4)
                        if L > 1 and next_side_after_n_prehang == 0:
                            prehang_of_n_in_corB = next_side_prehang
                            overlap_of_a_and_n_in_corA_refl = n - overhangsA_refl[i[0]]
                            if next_side_after_n[1][0] == b and next_side_after_n[1][1] == p and prehang_of_n_in_corB == overlap_of_a_and_n_in_corA_refl:
                                found_b_after_n_in_corA.append(i)
                                break
                
                if not found_b_after_n_in_corA:
                    return(False)
            
            #Subcase (iv): "a" has positive prehang and positive overhang.  

            if cur_side_prehang > 0 and cur_side_overhang > 0:
                
                n = next_side[0]
                p = prev_side[prevL - 1]
                
                # Need to check the following hold for each occurance of "a" in corB:
                # We must find "n", "b", and "n" (in that order) along a side of corA.
                # "b" must be positioned along a side of corA so that the prehang and overhang
                # of "a" on "b" match in both corA and corB

                found_nbp_in_corA = []
                
                for i in Pos_of_b_in_corA: # Note: Pos_of_b_in_corA is precomputed at the beginning of the function
                    corA_side = corA[2][i[0]]
                    # First check that "b" is in the middle of a side of corA
                    if 0 < i[1] < corA_side_num[i[0]] - 1:
                        # Next check that "n" comes before "b" and "p" comes after "b"
                        if corA_side[1][i[1] - 1] == n and corA_side[1][i[1] + 1] == p:
                            prehang_of_a_on_b_in_corB = cur_side_prehang
                            overhang_of_a_on_b_in_corB = cur_side_overhang
                            prehang_of_a_on_b_in_corA = sum(corA_side[1][0:i[1]]) - corA_side[0]
                            overhang_of_a_on_b_in_corA = sum(corA_side[1][i[1]+1:corA_side_num[i[0]]]) - overhangsA[i[0]]
                            # Checking that the prehangs and overhangs match
                            if prehang_of_a_on_b_in_corB == overhang_of_a_on_b_in_corA and overhang_of_a_on_b_in_corB == prehang_of_a_on_b_in_corA:
                                found_nbp_in_corA.append(i)
                                break
                if not found_nbp_in_corA:
                    for i in Pos_of_b_in_corA_refl: # Note: Pos_of_b_in_corA_refl is precomputed at the beginning of the function
                        corA_refl_side = corA_refl[2][i[0]]
                        # First check that "b" is in the middle of a side of corA
                        if 0 < i[1] < corA_refl_side_num[i[0]] - 1:
                            # Next check that "n" comes before "b" and "p" comes after "b"
                            if corA_refl_side[1][i[1] - 1] == n and corA_refl_side[1][i[1] + 1] == p:
                                prehang_of_a_on_b_in_corB = cur_side_prehang
                                overhang_of_a_on_b_in_corB = cur_side_overhang
                                prehang_of_a_on_b_in_corA_refl = sum(corA_refl_side[1][0:i[1]]) - corA_refl_side[0]
                                overhang_of_a_on_b_in_corA_refl = sum(corA_refl_side[1][i[1]+1:corA_refl_side_num[i[0]]]) - overhangsA_refl[i[0]]
                                # Checking that the prehangs and overhangs match
                                if prehang_of_a_on_b_in_corB == overhang_of_a_on_b_in_corA_refl and overhang_of_a_on_b_in_corB == prehang_of_a_on_b_in_corA_refl:
                                    found_nbp_in_corA.append(i)
                                    break
                if not found_nbp_in_corA:
                    return(False)
    
        # Case (2) "a" appears as a 1st but not only square of cur_side.

        if curL > 1 and q[1] == 0:

            # There are 2 subcase based on the prehang of "a" (cur_side_prehang) in corB:
            # (i) 0 < cur_side_prehang <= a
            # (ii) cur_side_prehang = 0

            # Subcase (i):
            if 0 < cur_side_prehang <= a:

                n = cur_side[1]
                p = prev_side[prevL - 1]
                
                # In this subcase, we need to check that
                # "n" is the last square of a side of corA
                # the overhang of "n" on that side of corA is 0
                # "b" is the 1st square of the next side of corA after the side containing this "n" with postitive prehang
                # "p" is the 2nd square of this next side (right after "b")
                # the overlap of "a" and "b" matches in corA and corB

                
                pos_of_n_in_corA = []
                pos_of_n_in_corA_refl = []
                for i in range(4):
                    corA_side = corA[2][i][1]
                    corA_refl_side = corA_refl[2][i][1]
                    pos_of_n_in_corA += [[i,j] for j, x in enumerate(corA_side) if x == n and j == corA_side_num[i] - 1] 
                    pos_of_n_in_corA_refl += [[i,j] for j, x in enumerate(corA_refl_side) if x == n and j == corA_refl_side_num[i] - 1] 
                if not pos_of_n_in_corA and not pos_of_n_in_corA_refl:
                    return(False)

                found_b_and_p_after_n_in_corA = []
                for i in pos_of_n_in_corA:
                    next_side_after_n = corA[2][(i[0] + 1) % 4]
                    L = corA_side_num[(i[0] + 1) % 4]
                    afterhang_of_n_in_corA = overhangsA[i[0]]
                    next_side_after_n_prehang = next_side_after_n[0]
                    if L > 1 and afterhang_of_n_in_corA == 0 and next_side_after_n_prehang > 0:
                        if b == next_side_after_n[1][0] and p == next_side_after_n[1][1]:
                            overlap_of_a_and_b_in_corA = b - next_side_after_n_prehang
                            overlap_of_a_and_b_in_corB = a - cur_side_prehang
                            if overlap_of_a_and_b_in_corA == overlap_of_a_and_b_in_corB:
                                found_b_and_p_after_n_in_corA.append(i)
                                break
                if not found_b_and_p_after_n_in_corA:
                    for i in pos_of_n_in_corA_refl:
                        next_side_after_n = corA_refl[2][(i[0] + 1) % 4]
                        L = corA_refl_side_num[(i[0] + 1) % 4]
                        afterhang_of_n_in_corA_refl = overhangsA_refl[i[0]]
                        next_side_after_n_prehang = next_side_after_n[0]
                        if L > 1 and afterhang_of_n_in_corA_refl == 0 and next_side_after_n_prehang > 0:
                            if b == next_side_after_n[1][0] and p == next_side_after_n[1][1]:
                                overlap_of_a_and_b_in_corA_refl = b - next_side_after_n_prehang
                                overlap_of_a_and_b_in_corB = a - cur_side_prehang
                                if overlap_of_a_and_b_in_corA_refl == overlap_of_a_and_b_in_corB:
                                    found_b_and_p_after_n_in_corA.append(i)
                                    break

                if not found_b_and_p_after_n_in_corA:
                    return(False)
                    
            # subcase (ii)
            if cur_side_prehang == 0:

                n = cur_side[1]
                p = prev_side[prevL - 1]

                # We must check that the following all hold:
                # (1) "n" appears as the last square of a side of "a" with no overhang
                # (2) for each such "n", "b" must be the 1st (and only) square of the next side with no overhang
                # (3) "p" must be the first square of the side after "b"
                # (4) The overlap of "p" and "b" in corB must equal the prehang of "p" in corA

                pos_of_n_in_corA = []
                pos_of_n_in_corA_refl = []
                for i in range(4):
                    corA_side = corA[2][i][1]
                    corA_refl_side = corA_refl[2][i][1]
                    pos_of_n_in_corA += [[i,j] for j, x in enumerate(corA_side) if x == n and j == corA_side_num[i] - 1]  
                    pos_of_n_in_corA_refl += [[i,j] for j, x in enumerate(corA_refl_side) if x == n and j == corA_refl_side_num[i] - 1]
                    
                if not pos_of_n_in_corA and not pos_of_n_in_corA_refl:
                    return(False)

                found_b_and_p_after_n_in_corA = []
                for i in pos_of_n_in_corA:
                    next_side_after_n = corA[2][(i[0] + 1) % 4]
                    L = corA_side_num[(i[0] + 1) % 4]
                    next_side_after_n_overhang = overhangsA[(i[0] + 1) % 4]
                    afterhang_of_n_in_corA = overhangsA[i[0]]
                    if afterhang_of_n_in_corA == 0 and L == 1 and b == next_side_after_n[1][0] and next_side_after_n_overhang == 0:
                        next_next_side_after_n = corA[2][(i[0] + 2) % 4]
                        next_next_side_after_n_prehang = next_next_side_after_n[0]
                        overlap_of_p_and_b_in_corB = p - prev_side_overhang
                        if next_next_side_after_n[1][0] == p and next_next_side_after_n_prehang == overlap_of_p_and_b_in_corB:
                            found_b_and_p_after_n_in_corA.append(i)
                            break
                if not found_b_and_p_after_n_in_corA:
                    for i in pos_of_n_in_corA_refl:
                        next_side_after_n = corA_refl[2][(i[0] + 1) % 4]
                        L = corA_refl_side_num[(i[0] + 1) % 4]
                        next_side_after_n_overhang = overhangsA_refl[(i[0] + 1) % 4]
                        afterhang_of_n_in_corA_refl = overhangsA_refl[i[0]]
                        if afterhang_of_n_in_corA_refl == 0 and L == 1 and b == next_side_after_n[1][0] and next_side_after_n_overhang == 0:
                            next_next_side_after_n = corA_refl[2][(i[0] + 2) % 4]
                            next_next_side_after_n_prehang = next_next_side_after_n[0]
                            overlap_of_p_and_b_in_corB = p - prev_side_overhang
                            if next_next_side_after_n[1][0] == p and next_next_side_after_n_prehang == overlap_of_p_and_b_in_corB:
                                found_b_and_p_after_n_in_corA.append(i)
                                break

                if not found_b_and_p_after_n_in_corA:
                    return(False)
                   
        # Case (3) "a" is a middle square of a side.

        if curL > 2 and 0 < q[1] < curL - 1:           

            # There are two subcases:
            # Subcase (i): "a" is the second square of a side in corB and the first square of that side
            # forms a double corner at the beginning of that side.
            # subcase (ii) "a" is a middle square different than described in subcase (i)

            # Subcase (i):
            if q[1] == 1 and cur_side[0] == cur_side_prehang:

                n = cur_side[2]
                p = cur_side[0]
                pp = prev_side[prevL - 1]

                # We must check that the following all hold:
                # (1) "n" appears as the last square of a side of corA with no overhang
                # (2) for each such "n", "b" must be the 1st (and only) square of the next side with no overhang
                # (3) "pp" must be the first square of the side after "b" and with prehang equal to pp (so forms a double corner)
                # (4) "p" must be the second square of the side after "b" (coming immediately after "p")

                pos_of_n_in_corA = []
                pos_of_n_in_corA_refl = []
                for i in range(4):
                    corA_side = corA[2][i][1]
                    corA_refl_side = corA_refl[2][i][1]
                    pos_of_n_in_corA += [[i,j] for j, x in enumerate(corA_side) if x == n and j == corA_side_num[i] - 1]
                    pos_of_n_in_corA_refl += [[i,j] for j, x in enumerate(corA_refl_side) if x == n and j == corA_refl_side_num[i] - 1]   
                if not pos_of_n_in_corA and not pos_of_n_in_corA_refl:
                    return(False)

                found_b_and_p_after_n_in_corA = []
                for i in pos_of_n_in_corA:
                    next_side_after_n = corA[2][(i[0] + 1) % 4]
                    L = corA_side_num[(i[0] + 1) % 4]
                    afterhang_of_n_in_corA = overhangsA[i[0]]
                    if L == 1 and b == next_side_after_n[1][0] and afterhang_of_n_in_corA == 0:
                        next_next_side_after_n = corA[2][(i[0] + 2) % 4]
                        LL = corA_side_num[(i[0] + 2) % 4]
                        if LL > 1:
                            next_next_side_after_n_prehang = next_next_side_after_n[0]
                            if pp == next_next_side_after_n[1][0] and p == next_next_side_after_n[1][1] and pp == next_next_side_after_n_prehang:
                                found_b_and_p_after_n_in_corA.append(i)
                                break

                if not found_b_and_p_after_n_in_corA:
                    for i in pos_of_n_in_corA_refl:
                        next_side_after_n = corA_refl[2][(i[0] + 1) % 4]
                        L = corA_refl_side_num[(i[0] + 1) % 4]
                        afterhang_of_n_in_corA_refl = overhangsA_refl[i[0]]
                        if L == 1 and b == next_side_after_n[1][0] and afterhang_of_n_in_corA_refl == 0:
                            next_next_side_after_n = corA_refl[2][(i[0] + 2) % 4]
                            LL = corA_refl_side_num[(i[0] + 2) % 4]
                            if LL > 1:
                                next_next_side_after_n_prehang = next_next_side_after_n[0]
                                if pp == next_next_side_after_n[1][0] and p == next_next_side_after_n[1][1] and pp == next_next_side_after_n_prehang:
                                    found_b_and_p_after_n_in_corA.append(i)
                                    break
                    
                if not found_b_and_p_after_n_in_corA:
                    return(False)

            # Subcase (ii):
            else:
                n = cur_side[q[1]+1]
                p = cur_side[q[1]-1]

                # In this subcase, we need each of the following to hold:
                # (1) "n" needs to be the last square of a side of "a"
                # (2) the overhang of n on that side must be 0
                # (3) "b" should be the only square of the next side (after the side of "n")
                # (4) "a" needs to be situated along the side of "b" the same in both corA and corB.
                #     This last condition will be measured using overhangs and prehangs in both coronas

                pos_of_n_in_corA = []
                pos_of_n_in_corA_refl = []
                for i in range(4):
                    corA_side = corA[2][i][1]
                    corA_refl_side = corA_refl[2][i][1]
                    pos_of_n_in_corA += [[i,j] for j, x in enumerate(corA_side) if x == n and j == corA_side_num[i] - 1]  
                    pos_of_n_in_corA_refl += [[i,j] for j, x in enumerate(corA_refl_side) if x == n and j == corA_refl_side_num[i] - 1]  
                if not pos_of_n_in_corA and pos_of_n_in_corA_refl:
                    return(False)

                found_b_and_p_after_n_in_corA = []
                for i in pos_of_n_in_corA:
                    next_side_after_n = corA[2][(i[0] + 1) % 4]
                    L = corA_side_num[(i[0] + 1) % 4]
                    afterhang_of_n_in_corA = overhangsA[i[0]]
                    if L == 1 and b == next_side_after_n[1][0] and afterhang_of_n_in_corA == 0:
                        part_of_b_before_a_in_corB = sum(cur_side[0:q[1]]) - cur_side_prehang
                        part_of_b_after_a_in_corB = sum(cur_side[q[1]+1:curL]) - cur_side_overhang
                        next_side_after_n_prehang = next_side_after_n[0]
                        next_side_after_n_overhang = overhangsA[(i[0] + 1) % 4]
                        if part_of_b_before_a_in_corB == next_side_after_n_overhang and part_of_b_after_a_in_corB == next_side_after_n_prehang:
                            found_b_and_p_after_n_in_corA.append(i)
                            break

                if not found_b_and_p_after_n_in_corA:
                    for i in pos_of_n_in_corA_refl:
                        next_side_after_n = corA_refl[2][(i[0] + 1) % 4]
                        L = corA_refl_side_num[(i[0] + 1) % 4]
                        afterhang_of_n_in_corA_refl = overhangsA_refl[i[0]]
                        if L == 1 and b == next_side_after_n[1][0] and afterhang_of_n_in_corA_refl == 0:
                            part_of_b_before_a_in_corB = sum(cur_side[0:q[1]]) - cur_side_prehang
                            part_of_b_after_a_in_corB = sum(cur_side[q[1]+1:curL]) - cur_side_overhang
                            next_side_after_n_prehang = next_side_after_n[0]
                            next_side_after_n_overhang = overhangsA_refl[(i[0] + 1) % 4]
                            if part_of_b_before_a_in_corB == next_side_after_n_overhang and part_of_b_after_a_in_corB == next_side_after_n_prehang:
                                found_b_and_p_after_n_in_corA.append(i)
                                break

                if not found_b_and_p_after_n_in_corA:
                    return(False)

        # Case (4): "a" is a last but not 1st square of a side of corB

        if q[1] == curL - 1 and q[1] != 0:

            # There are a few subcases to consider:
            # Subcase (i): cur_side_overhang == 0 and prehang on next side is less than 1st sqaure (no double corner)
            # Subcase (ii): cur_side_overhang == 0 and prehang on next side equals 1st square (double corner)
            # Subcase (iii): cur_side_overhang > 0 and this "a" is not forming a double corner at the beginning of cur_side
            # Subcase (iv): cur_side_overhang > 0 and this "a" forms a double corner at the beginning of cur_side
            
            n = next_side[0]
            p = cur_side[q[1]-1]

            # Subcase (i):
            if cur_side_overhang == 0 and next_side_prehang < n:

                # In this subcase, each of the following must be satisfied for compatibility:
                # (1) "n" is a last square of some side of "a" with positive overhang < "n"
                # (2) "b" is the first and only square of the next side (after "n") having prehang of 0.
                # (3) "p" is the first square of the next side (after "b") having prehang of 0.
                # (4) The afterhang of "n" in corA equals the overlap of "n" and "b" in corB

                pos_of_n_in_corA = []
                pos_of_n_in_corA_refl = []
                for i in range(4):
                    corA_side = corA[2][i][1]
                    corA_refl_side = corA_refl[2][i][1]
                    pos_of_n_in_corA += [[i,j] for j, x in enumerate(corA_side) if x == n and j == corA_side_num[i] - 1]
                    pos_of_n_in_corA_refl += [[i,j] for j, x in enumerate(corA_refl_side) if x == n and j == corA_refl_side_num[i] - 1]
                if not pos_of_n_in_corA and not pos_of_n_in_corA_refl:
                    return(False)

                found_b_and_p_after_n_in_corA = []
                for i in pos_of_n_in_corA:
                    next_side_after_n = corA[2][(i[0] + 1) % 4]
                    L = corA_side_num[(i[0] + 1) % 4]
                    afterhang_of_n_in_corA = overhangsA[i[0]]
                    if L == 1 and 0 < afterhang_of_n_in_corA < n and next_side_after_n[1][0] == b:
                        next_next_side_after_n = corA[2][(i[0] + 2) % 4]
                        if next_next_side_after_n[1][0] == p and next_next_side_after_n[0] == 0:
                            overlap_of_n_and_b_in_corB = n - next_side_prehang
                            afterhang_of_n_in_corA = overhangsA[i[0]]
                            if overlap_of_n_and_b_in_corB == afterhang_of_n_in_corA:
                                found_b_and_p_after_n_in_corA.append(i)
                                break
                if not found_b_and_p_after_n_in_corA:
                    for i in pos_of_n_in_corA_refl:
                        next_side_after_n = corA_refl[2][(i[0] + 1) % 4]
                        L = corA_refl_side_num[(i[0] + 1) % 4]
                        afterhang_of_n_in_corA_refl = overhangsA_refl[i[0]]
                        if L == 1 and 0 < afterhang_of_n_in_corA_refl < n and next_side_after_n[1][0] == b:
                            next_next_side_after_n = corA_refl[2][(i[0] + 2) % 4]
                            if next_next_side_after_n[1][0] == p and next_next_side_after_n[0] == 0:
                                overlap_of_n_and_b_in_corB = n - next_side_prehang
                                afterhang_of_n_in_corA_refl = overhangsA_refl[i[0]]
                                if overlap_of_n_and_b_in_corB == afterhang_of_n_in_corA_refl:
                                    found_b_and_p_after_n_in_corA.append(i)
                                    break

                if not found_b_and_p_after_n_in_corA:
                    return(False)
                            
            # Subcase (ii): cur_side_overhang == 0 and prehang on next side equals 1st square (double corner)
            if cur_side_overhang == 0 and next_side_prehang == n:
                nn = next_side[1] # note that since n = next_side_prehang, then len(next_side) > 1

                # In this subcase, each of the following must be satisfied for compatibility:
                # (1) "n" must appear as the last square of a side of "a" with 0 overhang
                # (2) "nn" must appear as the 1st square after "n" and have prehang = nn
                # (3) "b" must appear as the 2nd (and last) square of the side of "a" containing "nn"
                # (4) "p" must appear as the 1st square of the next side (after "b") with 0 prehang

                pos_of_n_in_corA = []
                pos_of_n_in_corA_refl = []
                for i in range(4):
                    corA_side = corA[2][i][1]
                    corA_refl_side = corA_refl[2][i][1]
                    pos_of_n_in_corA += [[i,j] for j, x in enumerate(corA_side) if x == n and j == corA_side_num[i] - 1]
                    pos_of_n_in_corA_refl += [[i,j] for j, x in enumerate(corA_refl_side) if x == n and j == corA_refl_side_num[i] - 1]
                if not pos_of_n_in_corA and not pos_of_n_in_corA_refl:
                    return(False)

                found_b_and_p_after_n_in_corA = []
                for i in pos_of_n_in_corA:
                    next_side_after_n = corA[2][(i[0] + 1) % 4]
                    L = corA_side_num[(i[0] + 1) % 4]
                    afterhang_of_n_in_corA = overhangsA[i[0]]
                    next_side_after_n_prehang = next_side_after_n[0]
                    if next_side_after_n[1][0] == nn and next_side_after_n_prehang == nn and L == 2:
                        if next_side_after_n[1][1] == b:
                            next_next_side_after_n = corA[2][(i[0] + 2) % 4]
                            next_next_side_after_n_prehang = next_next_side_after_n[0]
                            if next_next_side_after_n[1][0] == p and next_next_side_after_n_prehang == 0:
                                found_b_and_p_after_n_in_corA.append(i)
                                break

                if not found_b_and_p_after_n_in_corA:
                    for i in pos_of_n_in_corA_refl:
                        next_side_after_n = corA_refl[2][(i[0] + 1) % 4]
                        L = corA_refl_side_num[(i[0] + 1) % 4]
                        afterhang_of_n_in_corA_refl = overhangsA_refl[i[0]]
                        next_side_after_n_prehang = next_side_after_n[0]
                        if next_side_after_n[1][0] == nn and next_side_after_n_prehang == nn and L == 2:
                            if next_side_after_n[1][1] == b:
                                next_next_side_after_n = corA_refl[2][(i[0] + 2) % 4]
                                next_next_side_after_n_prehang = next_next_side_after_n[0]
                                if next_next_side_after_n[1][0] == p and next_next_side_after_n_prehang == 0:
                                    found_b_and_p_after_n_in_corA.append(i)
                                    break

                if not found_b_and_p_after_n_in_corA:
                    return(False)
                        
            # Subcase (iii): cur_side_overhang > 0 and this "a" is not forming a double corner at the beginning of cur_side
            if cur_side_overhang > 0 and (q[1] > 1 or cur_side_prehang != cur_side[0]):

                # In this subcase, each of the following must be satisfied for compatibility:
                # (1) "b" must be the last square of a side of corA with postitive overhang
                # (2) "n" must be the second to last square of that same side.
                # (3) "p" must be the first square of the next side with 0 prehang.
                # (4) The overlap between "a" and "b" must be the same in both corA and corB

                
                pos_of_b_in_corA = []
                pos_of_b_in_corA_refl = []
                for i in range(4):
                    corA_side = corA[2][i][1]
                    corA_refl_side = corA_refl[2][i][1]
                    pos_of_b_in_corA += [[i,j] for j, x in enumerate(corA_side) if x == b and j == corA_side_num[i] - 1]
                    pos_of_b_in_corA_refl += [[i,j] for j, x in enumerate(corA_refl_side) if x == b and j == corA_refl_side_num[i] - 1]
                if not pos_of_b_in_corA and not pos_of_b_in_corA_refl:
                    return(False)

                found_n_before_b_in_corA = []
                for i in pos_of_b_in_corA:
                    corA_side_of_b = corA[2][i[0]][1]
                    L = corA_side_num[i[0]]
                    if L > 1:
                        if n == corA_side_of_b[i[1] - 1]:
                            next_side_after_b = corA[2][(i[0] + 1) % 4]
                            if p == next_side_after_b[1][0] and next_side_after_b[0] == 0:
                                overlap_of_a_and_b_in_corB = a - cur_side_overhang
                                overlap_of_a_and_b_in_corA = b - overhangsA[i[0]]
                                if overlap_of_a_and_b_in_corB == overlap_of_a_and_b_in_corA:
                                    found_n_before_b_in_corA.append(i)
                                    break

                if not found_n_before_b_in_corA:
                    for i in pos_of_b_in_corA_refl:
                        corA_refl_side_of_b = corA_refl[2][i[0]][1]
                        L = corA_refl_side_num[i[0]]
                        if L > 1:
                            if n == corA_refl_side_of_b[i[1] - 1]:
                                next_side_after_b = corA_refl[2][(i[0] + 1) % 4]
                                if p == next_side_after_b[1][0] and next_side_after_b[0] == 0:
                                    overlap_of_a_and_b_in_corB = a - cur_side_overhang
                                    overlap_of_a_and_b_in_corA_refl = b - overhangsA_refl[i[0]]
                                    if overlap_of_a_and_b_in_corB == overlap_of_a_and_b_in_corA_refl:
                                        found_n_before_b_in_corA.append(i)
                                        break

                if not found_n_before_b_in_corA:
                    return(False)
            
            # Subcase (iv): cur_side_overhang > 0 and this "a" forms a double corner at the beginning of cur_side
            if cur_side_overhang > 0 and q[1] == 1 and cur_side[0] == cur_side_prehang and curL == 2:
                
                pp = prev_side[prevL - 1]

                # In this subcase, each of the following must be satisfied for compatibility:
                # (1) "b" must be the last square of some side of corA with zero overhang
                # (2) "n" must be the square just before "b" on that side
                # (3) "pp" must be the first square of the side after "b" with prehang equal to "pp"
                # (4) "p" must be the second square of the side of "pp" 

                pos_of_b_in_corA = []
                pos_of_b_in_corA_refl = []
                for i in range(4):
                    corA_side = corA[2][i][1]
                    corA_refl_side = corA_refl[2][i][1]
                    pos_of_b_in_corA += [[i,j] for j, x in enumerate(corA_side) if x == b and j == corA_side_num[i] - 1]
                    pos_of_b_in_corA_refl += [[i,j] for j, x in enumerate(corA_refl_side) if x == b and j == corA_refl_side_num[i] - 1]   
                if not pos_of_b_in_corA and not pos_of_b_in_corA_refl:
                    return(False)

                found_nb_in_corA = []
                for i in pos_of_b_in_corA:
                    side_of_b_in_corA = corA[2][i[0]]
                    side_of_b_overhang_in_corA = overhangsA[i[0]]
                    L = corA_side_num[i[0]]
                    if L > 1 and side_of_b_overhang_in_corA == 0:
                        next_side_after_b_in_corA = corA[2][(i[0] + 1) % 4]
                        next_side_after_b_in_corA_prehang = next_side_after_b_in_corA[0]
                        LL = corA_side_num[(i[0] + 1) % 4]
                        if side_of_b_in_corA[1][i[1] - 1] == n and LL > 1:
                            if pp == next_side_after_b_in_corA[1][0] and pp == next_side_after_b_in_corA_prehang and p == next_side_after_b_in_corA[1][1]:
                                found_nb_in_corA.append(i)
                                break
                if not found_nb_in_corA:
                    for i in pos_of_b_in_corA_refl:
                        side_of_b_in_corA_refl = corA_refl[2][i[0]]
                        side_of_b_overhang_in_corA_refl = overhangsA_refl[i[0]]
                        L = corA_refl_side_num[i[0]]
                        if L > 1 and side_of_b_overhang_in_corA_refl == 0:
                            next_side_after_b_in_corA_refl = corA_refl[2][(i[0] + 1) % 4]
                            next_side_after_b_in_corA_refl_prehang = next_side_after_b_in_corA_refl[0]
                            LL = corA_refl_side_num[(i[0] + 1) % 4]
                            if side_of_b_in_corA_refl[1][i[1] - 1] == n and LL > 1:
                                if pp == next_side_after_b_in_corA_refl[1][0] and pp == next_side_after_b_in_corA_refl_prehang and p == next_side_after_b_in_corA_refl[1][1]:
                                    found_nb_in_corA.append(i)
                                    break
                
                if not found_nb_in_corA:
                    return(False)

    return(True)
\end{lstlisting}

\subsection{Drawing Coronas}\label{apx:draw}\mbox{}\\

\begin{lstlisting}[caption={Drawing Coronas}, label={prog:drawing}]

# draw_sq_top draws a square of side length "size" and lower left-hand corner "llc".
# It is used later to draw the squares on the top side of a corona.
def draw_sq_top(size,llc):
    p = polygon([[llc[0],llc[1]],
                 [llc[0]+size,llc[1]],
                 [llc[0]+size,llc[1]+size],
                 [llc[0],llc[1]+size]],
                fill=false,rgbcolor=(0,0,0),aspect_ratio=1,axes=false)
    t = text(size,(llc[0]+0.5*size,llc[1]+0.5*size),fontsize=10)

    return p+t

# draw_sq_bottom draws a square of side length "size" and upper right-hand corner "urc". 
# It is used later to draw the squares on the bottom side of a corona.
def draw_sq_bottom(size, urc):
    p = polygon([[urc[0],urc[1]],
                 [urc[0],urc[1]-size],
                 [urc[0]-size,urc[1]-size],
                 [urc[0]-size,urc[1]]],
                fill=false,rgbcolor=(0,0,0),aspect_ratio=1,axes=false)
    t = text(size,(urc[0]-0.5*size,urc[1]-0.5*size),fontsize=10)

    return p+t

# draw_sq_top draws a square of side length "size" and upper left-hand corner "ulc".
# It is used later to draw the squares on the right side of a corona.
def draw_sq_right(size,ulc):
    p = polygon([[ulc[0],ulc[1]],
                 [ulc[0]+size,ulc[1]],
                 [ulc[0]+size,ulc[1]-size],
                 [ulc[0],ulc[1]-size]],
                fill=false,rgbcolor=(0,0,0),aspect_ratio=1,axes=false)
    t = text(size,(ulc[0]+0.5*size,ulc[1]-0.5*size),fontsize=10)
    return p+t

# draw_sq_top draws a square of side length "size" and lower right-hand corner "lrc".
# It is used later to draw the squares on the left side of a corona.
def draw_sq_left(size,lrc):    
    p = polygon([[lrc[0],lrc[1]],
                 [lrc[0]-size,lrc[1]],
                 [lrc[0]-size,lrc[1]+size],
                 [lrc[0],lrc[1]+size]],
                fill=false,rgbcolor=(0,0,0),aspect_ratio=1,axes=false)
    t = text(size,(lrc[0]-0.5*size,lrc[1]+0.5*size),fontsize=10)
    return p+t

# The data structure for a corona signature is as follows: 
# [[ x0, [a1,a2,...]], [ x1, [b1,b2,...]], [ x2, [c1,c2,...]], [ x3, [d1,d2,...]]] 

def draw_corona(center_size,corona_sig):
    corona = Graphics()
    #Drawing the center square
    corona += draw_sq_top(center_size,[0,0])

    C = corona_sig
    
    x0 = C[0][0]
    cur_pos = [-x0,center_size]
    for i in C[0][1]:
        corona += draw_sq_top(i,cur_pos)
        cur_pos[0]+=i

    x1=C[1][0]
    cur_pos = [center_size,center_size+x1]
    for i in C[1][1]:
        corona += draw_sq_right(i,cur_pos)
        cur_pos[1]-=i

    x2=C[2][0]
    cur_pos = [center_size + x2,0]
    for i in C[2][1]:
        corona += draw_sq_bottom(i,cur_pos)
        cur_pos[0]-=i

    x3=C[3][0]
    cur_pos = [0,-x3]
    for i in C[3][1]:
        corona += draw_sq_left(i,cur_pos)
        cur_pos[1]+=i

    return(corona)
\end{lstlisting}

\subsection{Computing all Compatible 4-Tuples of Coronas}\label{apx:computing}\mbox{}\\

\begin{lstlisting}[caption={Computing Compatible 4-Tuples of Coronas}, label={prog:computing}, basicstyle=\footnotesize\ttfamily]

%%time

All_Corona_4_Tups = list(it.product(FlatCors1,FlatCors2,FlatCors3,FlatCors4))
print("len(All_Corona_4_Tups) = ",len(All_Corona_4_Tups))

num_Compatible = 0
Compatible_Coronas = []
Comp_Cors_Indices = []

all_pairs = list(it.product(range(0,4),range(0,4)))
all_triples = list(it.product(range(0,4),range(0,4),range(0,4)))

index = -1
for Q in All_Corona_4_Tups:
    index += 1
    
    #Checking if coronas in Q are pairwise compatible
    flag = True
    for i,j in all_pairs:
        if Geom_Compat_Check(Q[i],Q[j]) == False:
            flag = False
            break
    if flag == False:
        continue

    #Checking that a pair of coronas in Q do not admit a UE-2 tiling
    #If there is a pair of coronas in Q that involve only 2 sizes of squares total among them,
    #then it generates a UE-2 tiling or possibly no tiling.
    flag = False
    for i,j in all_pairs:
        if i < j:
            DE1 = list(set(list(set(it.chain(Q[i][2][0][1],Q[i][2][1][1],Q[i][2][2][1],Q[i][2][3][1])))) | set([i+1]))
            DE2 = list(set(list(set(it.chain(Q[j][2][0][1],Q[j][2][1][1],Q[j][2][2][1],Q[j][2][3][1])))) | set([j+1]))
            DE_both = list(set(DE1) | set(DE2))
            if len(DE_both) == 2:
                flag = True
                break
    if flag == True:
        continue

    #Checking that the coronas do not admit UE-3 Tilings
    #If there is a triple of coronas in Q that involve only 3 sizes of squares total among them,
    #then it generates a UE-3 tiling or possibly no tiling.
    flag = False
    for i,j,k in all_triples:
        if i < j and j < k:
            DE1 = list(set(list(set(it.chain(Q[i][2][0][1],Q[i][2][1][1],Q[i][2][2][1],Q[i][2][3][1])))) | set([i+1]))
            DE2 = list(set(list(set(it.chain(Q[j][2][0][1],Q[j][2][1][1],Q[j][2][2][1],Q[j][2][3][1])))) | set([j+1]))
            DE3 = list(set(list(set(it.chain(Q[k][2][0][1],Q[k][2][1][1],Q[k][2][2][1],Q[k][2][3][1])))) | set([k+1]))
            DE_all = list(set(DE1) | set(DE2) | set(DE3))
            #print([[i,j,k],DE_all])
            if len(DE_all) < 4:
                flag = True
                break
    if flag == True:
        continue
    
    Compatible_Coronas.append(Q)
    Comp_Cors_Indices.append(index)
    num_Compatible = num_Compatible + 1
    
print("num_Compatible = ",num_Compatible)    
\end{lstlisting}

\begin{lstlisting}[caption={Drawing Compatible 4-Tuples of Coronas}, label={prog:draw_compat}, basicstyle=\footnotesize\ttfamily]

c = 0
print("Number of Compatible Coronas = ",len(Compatible_Coronas))
for Q in Compatible_Coronas:

    print("index = ", Comp_Cors_Indices[c])
    print("count = ", c+1)
    print("1-corona = ",Q[0])
    print("2-corona = ",Q[1])
    print("3-corona = ",Q[2])
    print("4-corona = ",Q[3])
    Latex_strg = "$( "
    for i in range(4):
        for j in range(4):
            Latex_strg += str(Q[i][2][j][1][0])
            Latex_strg += "^"
            Latex_strg += str(Q[i][2][j][0])
            for k in range(1,len(Q[i][2][j][1])):
                Latex_strg += "."
                Latex_strg += str(Q[i][2][j][1][k])
            if j < 3:
                Latex_strg += ";"
        if i < 3:
            Latex_strg += r" \,\, , \,\, "
        else:
            Latex_strg += " )$"
    print(Latex_strg)

    p1 = draw_corona(Q[0][0],Q[0][2])
    p2 = draw_corona(Q[1][0],Q[1][2])
    p3 = draw_corona(Q[2][0],Q[2][2])
    p4 = draw_corona(Q[3][0],Q[3][2])

    G = graphics_array([p1, p2, p3, p4])
    
    show(G,figsize=6)
    c += 1
    name = str(c) + '-cor.pdf'
    G.save(name,aspect_ratio=1,figsize=10)  
\end{lstlisting}

\end{appendices}

\printbibliography

@book{GS1987,
  title     = {Tilings and Patterns},
  author    = {Branko Gr{\"{u}}nbaum and G. C. Shephard},
  year      = 1987,
  publisher = {W. H. Freeman and Company},
  address   = {New York}
}

@article {MannUE4,
    AUTHOR = {Mann, Casey and DiNatale, Joseph and Peirce, Emily and
              Vitercik, Ellen},
     TITLE = {Unilateral and equitransitive tilings by squares of four
              sizes},
   JOURNAL = {Ars Math. Contemp.},
  FJOURNAL = {Ars Mathematica Contemporanea},
    VOLUME = {10},
      YEAR = {2016},
    NUMBER = {1},
     PAGES = {135--167},
      ISSN = {1855-3966,1855-3974},
   MRCLASS = {52C20 (05B45)},
  MRNUMBER = {3439934},
MRREVIEWER = {Elizaveta\ Zamorzaeva},
       DOI = {10.26493/1855-3974.644.d7c},
       URL = {https://doi.org/10.26493/1855-3974.644.d7c},
}

@incollection {Schatt,
    AUTHOR = {Schattschneider, D.},
     TITLE = {Unilateral and equitransitive tilings by squares},
      NOTE = {The Branko Gr\"unbaum birthday issue},
   JOURNAL = {Discrete Comput. Geom.},
  FJOURNAL = {Discrete \& Computational Geometry. An International Journal
              of Mathematics and Computer Science},
    VOLUME = {24},
      YEAR = {2000},
    NUMBER = {2-3},
     PAGES = {519--525},
      ISSN = {0179-5376,1432-0444},
   MRCLASS = {52C20 (05B45)},
  MRNUMBER = {1758068},
MRREVIEWER = {Elizaveta\ Zamorzaeva},
       DOI = {10.1007/s004540010053},
       URL = {https://doi.org/10.1007/s004540010053},
}

@article {Bolcskei,
    AUTHOR = {B\"olcskei, Attila},
     TITLE = {Classification of unilateral and equitransitive tilings by
              squares of three sizes},
   JOURNAL = {Beitr\"age Algebra Geom.},
  FJOURNAL = {Beitr\"age zur Algebra und Geometrie. Contributions to Algebra
              and Geometry},
    VOLUME = {41},
      YEAR = {2000},
    NUMBER = {1},
     PAGES = {267--277},
      ISSN = {0138-4821},
   MRCLASS = {52C20},
  MRNUMBER = {1745596},
}

@article {Martini,
    AUTHOR = {Martini, Horst and Makai, Endre and Soltan, Valeriu},
     TITLE = {Unilateral tilings of the plane with squares of three sizes},
   JOURNAL = {Beitr\"age Algebra Geom.},
  FJOURNAL = {Beitr\"age zur Algebra und Geometrie. Contributions to Algebra
              and Geometry},
    VOLUME = {39},
      YEAR = {1998},
    NUMBER = {2},
     PAGES = {481--495},
      ISSN = {0138-4821},
   MRCLASS = {52C20},
  MRNUMBER = {1642720},
MRREVIEWER = {R.\ Blind},
}

\end{document}